\documentclass[11pt]{amsart}
\usepackage{ae}
\usepackage[all]{xy}
\usepackage{amsfonts,amssymb,amsmath}
\usepackage{amsmath}
\usepackage{pifont}
\usepackage{enumitem}

 \usepackage[backref]{hyperref}
\hypersetup{colorlinks,linkcolor=blue,citecolor=red,}
\usepackage[alphabetic,backrefs,lite]{amsrefs}

\usepackage{graphicx}
\makeatletter
\newcommand*\bigcdot{\mathpalette\bigcdot@{.5}}
\newcommand*\bigcdot@[2]{\mathbin{\vcenter{\hbox{\scalebox{#2}{$\m@th#1\bullet$}}}}}
\makeatother

%\renewcommand\backrefxxx[3]{\hyperlink{page.#1}{$\uparrow$#1}}

%\usepackage[latin1]{inputenc}
%\usepackage[T1]{fontenc}
%\usepackage[francais]{babel}

% for the Tate-Sha group !!
\usepackage[OT2,T1]{fontenc}
\DeclareSymbolFont{cyrletters}{OT2}{wncyr}{m}{n}
\DeclareMathSymbol{\sha}{\mathalpha}{cyrletters}{"58}
% for Russian B
\input cyracc.def

\usepackage{enumitem}

%%%%% the following lines seems useless!!
%\pagestyle{plain}
%\setlength{\parindent}{12pt}
%\setlength{\parskip}{3pt plus1pt minus2pt}
%\setlength{\baselineskip}{20pt plus2pt minus1pt}
%\setlength{\textheight}{21true cm}
%\setlength{\textwidth}{13true cm}

\vfuzz2pt % Don't report over-full v-boxes if over-edge is small

% THEOREM Environments ---------------------------------------------------
 \newtheorem{thm}{Theorem}[section]

 \newtheorem{corollary}[thm]{Corollary}
 \newtheorem{lemma}[thm]{Lemma}
 
 \newtheorem{proposition}[thm]{Proposition}
 \theoremstyle{definition}
 \newtheorem{defn}[thm]{Definition}
 \theoremstyle{remark}
 
 \theoremstyle{remark}
 \newtheorem{remark}[thm]{Remark}

% MATH -------------------------------------------------------------------

 \newcommand{\To}{\longrightarrow}

 \newcommand{\inv}{\textup{inv}}
 \newcommand{\F}{\mathbb{F}}
 
 \renewcommand{\v}{\textup{val}}

 \newcommand{\Br}{\textup{Br}}

  \newcommand{\Spec}{\textup{Spec}}

\newcommand{\nosp}{\negthinspace}

 \renewcommand{\P}{\mathbb{P}}

 \newcommand{\Q}{\mathbb{Q}}

 \newcommand{\Z}{\mathbb{Z}}

 \renewcommand{\O}{\mathcal{O}}

\numberwithin{equation}{section}

%\makeatletter\let\@wraptoccontribs\wraptoccontribs\makeatother

%%% ----------------------------------------------------------------------

\begin{document}

\title
{
Hasse principle violation for algebraic families of del Pezzo surfaces of degree $4$ and  hyperelliptic curves of genus congruent to $1$ modulo $4$
}

\author[1]{Kai Huang}
\author[2]{Yongqi Liang}

\address{Kai Huang
\newline University of Scinece and Technology of China,
\newline School of Mathematical Sciences,
\newline 96 Jinzhai Road,
\newline  230026 Hefei, Anhui, China
 }

\email{hk0708@mail.ustc.edu.cn}

\address{Yongqi LIANG
\newline University of Scinece and Technology of China,
\newline School of Mathematical Sciences,
\newline 96 Jinzhai Road,
\newline  230026 Hefei, Anhui, China
 }

\email{yqliang@ustc.edu.cn}

\keywords{Hasse principle, Brauer\textendash Manin obstruction, del Pezzo surface of degree $4$, hyperelliptic curve}
\thanks{\textit{MSC 2020} : Primary 14G12, 14G05; Secondary  11G35, 14H25, 14G25}

%\affil[1]{where he or she works}
%\affil[2]{where he or she works}
\date{\today}

\maketitle

\pagestyle{plain}

\begin{abstract}
   Let $g$ be a positive integer congruent to $1$ modulo $4$ and  $K$ be an arbitrary number field. We construct infinitely many explicit  one-parameter algebraic families of degree $4$ del Pezzo surfaces and   genus $g$ hyperelliptic curves such that each $K$-member of the families violates the Hasse principle. In particular, we obtain algebraic families of non-trivial $2$-torsion elements in the Tate\textendash Shafarevich group of algebraic families of elliptic curves over $K$.
   These Hasse principle violations are explained by the Brauer\textendash Manin obstruction.
\end{abstract}

%%%%%%%%%%%%%%%%%%%%%%%%%%%%%%%%%%%%%%%%%%%%%%%%%%%%%
\section{Introduction}
%%%%%%%%%%%%%%%%%%%%%%%%%%%%%%%%%%%%%%%%%%%%%%%%%%%%%

We consider the Hasse principle  for existence of rational points on algebraic varieties defined over number fields. Among various classes of algebraic varieties, it is conjectured that violation of the Hasse principle is explained by the Brauer\textendash Manin obstruction for
\begin{itemize}
\item del Pezzo surfaces of degree $4$ by Colliot-Th\'el\`ene\textendash Sansuc \cite{CTSansuc80},
\item smooth projective curves by Scharaschkin \cite{Scharaschkin} and Skorobogatov \cite[\S 6.2]{Skbook}.
\end{itemize}
Examples in these two classes of varieties which violate the Hasse principle have been constructed by so many authors that we are not able to exhaust. We would like to mention Birch and Swinnerton-Dyer \cite[Theorem 3]{BirchSD75} for  del Pezzo surfaces of degree $4$ and Lind \cite{Lind} and Reichardt \cite{Reichardt} for the first examples of curves.

Though lots of single examples are known, but extending them to algebraic families seems very difficult, even using the powerful tool of the Brauer\textendash Manin obstruction, which has been widely considered  during the last fifty years. Compared to the related question of weak approximation properties, it is observed that a nontrivial Brauer group usually does not obstruct the Hasse principle. Roughly speaking, examples of violation of the Hasse principle within families  are rare. The result of Bright \cite[Theorem 1.1]{Bright18} gave a possible explanation of this phenomenon for algebraic families whose parameter spaces are projective spaces.  To be precise,  by  an \emph{algebraic family} over a number field $K$, we mean a morphism of $K$-varieties $\textbf{V}\To\P^1$, whose general fibers belong to a certain class of varieties, and we will consider arithmetic properties simultaneously for all (but finitely many) fibers over $K$-rational points.

In the present paper, we mainly discuss algebraic families of varieties violating the Hasse principle, and we focus on the two classes of varieties mentioned above. It is a challenge to prove the existence or produce explicit such algebraic families, especially in the class of geometrically rational varieties, which are most likely to satisfy the assumptions of Bright's result. To the knowledge of the authors, in the literature no  algebraic families of degree $4$ del Pezzo surfaces are known  to violate the Hasse principle. However, in the other direction, Jahnel and Schindler \cite{JahnelSchindler17} showed that the degree $4$ del Pezzo surfaces that violate the Hasse principle are Zariski dense in the moduli scheme. This indicates that the task of producing an algebraic family of degree $4$ del Pezzo surfaces violating the Hasse principle still appears feasible.

For curves, the situation is better (at least over $\Q$) but  still far from satisfactory. In \cite{CTP00}, Colliot-Th\'el\`ene and Poonen proved  the existence of nonisotrivial algebraic families of genus $1$ curves over $\mathbb{Q}$ violating the Hasse principle. Soon after that, in \cite{Poonen01} Poonen produced an explicit  family. For an integer $g>5$ not divisible by $4$, in \cite{DongQuan15} Dong Quan constructed explicit algebraic families of genus $g$ curves over $\Q$ violating the Hasse principle.  All these families are defined over $\Q$. %%It seems difficult to extend their methods to produce algebraic families over  a general number field.

It is worth adding that, though not  directly related to results in this paper,  Dong Quan constructed algebraic families of K3 surfaces over $\Q$ violating the Hasse principle in \cite{DongQuan12}.

Now we state our main results.

\begin{thm}[{Theorem \ref{XYSarithmetic}}]\label{mainthm}
Let $K$ be a number field and $g$ be a positive integer such that $g\equiv1~\mathrm{mod}~4$.
Then there exist infinitely many explicit algebraic families $\textbf{S}\To\P^1$ of degree $4$ del Pezzo surfaces and $\textbf{X}\To\P^1$ of genus $g$ hyperelliptic curves, such that for all rational points $\theta\in\P^1(K)$ the fibers $\textbf{S}_\theta$ and $\textbf{X}_\theta$  violate the Hasse principle.
\end{thm}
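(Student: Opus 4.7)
The plan is to produce, for each choice of an auxiliary quadratic extension $L/K$ and a finite set of primes of $K$ with prescribed splitting behavior, an explicit one-parameter family whose fibers simultaneously carry an adelic point and admit a uniformly non-trivial \BMo. The countably many families are obtained by varying these auxiliary data. I would present the two cases separately but in parallel, since the strategy is essentially the same.

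For the degree $4$ del Pezzo family $\mathbf{S}\to\P^1$, I would begin with the generic fiber written as the smooth intersection of two quadrics in $\P^4_K$, generalizing the classical Birch--Swinnerton-Dyer example. A natural candidate is to let one quadric have the shape of a norm form from $L=K(\sqrt{d})$, say $Q_1\colon x_0^2-dx_1^2=\alpha(\theta) x_2^2+\beta(\theta) x_3x_4$, and let the parameter $\theta$ enter the second quadric $Q_2$ linearly in the coefficients. The quaternion class $(d,h(\theta))\in\Br(\mathbf{S}_\theta)$ is then a natural candidate Brauer element, and it extends across the generic fiber precisely because of the norm-form structure. Non-triviality of the resulting \BM pairing uniformly in $\theta\in\P^1(K)$ is reduced to showing that $h(\theta)$ has a prescribed parity of valuation at a fixed inert prime of $L/K$; this is arranged by taking $h(\theta)$ to be a fixed non-square unit times the square of a polynomial in $\theta$. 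Local solvability at all places must then be checked place by place: at the controlled prime a solution is produced by Hensel from a smooth reduction, and at other places one uses the norm-form structure together with the fact that intersections of two quadrics in $\P^4$ of dimension $\geq 2$ with sufficient local flexibility acquire points.

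For the hyperelliptic family $\mathbf{X}\to\P^1$ of genus $g\equiv1\pmod 4$, I would write the generic fiber as $y^2=P(x)\cdot Q(x,\theta)$ with $\deg P+\deg_x Q=2g+2$, where $P\in K[x]$ depends only on the auxiliary data and $Q$ depends linearly on $\theta$. The obstructing class takes the form $(P(x),\alpha)\in\Br(K(\mathbf{X}_\theta))$ for a suitable $\alpha\in K^\times$; the congruence $g\equiv1\pmod 4$ enters precisely in forcing the sum of local invariants, expressed as a sum of Hilbert symbols over roots of $P$ and $Q$, to have the required odd parity at the obstructing prime. The structural reason $g\equiv1\pmod 4$ is selected is exactly that this parity count works out, and it explains the common thread between the two theorems. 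In the case $g=1$ the Jacobian of $\mathbf{X}_\theta$ is an elliptic curve, so the family yields the claimed $2$-torsion classes in $\sha$.

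The main obstacle in both cases, and the step I expect to be hardest, is the uniformity over an arbitrary number field $K$: the existing constructions over $\Q$ exploit explicit numerical congruences and quadratic reciprocity that have no literal analogue in $K$. I would address this by selecting the auxiliary primes via \v{C}ebotarev's density theorem applied to a suitable compositum of $L$ with cyclotomic and ray class field extensions of $K$, which both produces infinitely many inequivalent families and supplies the arithmetic input needed for the invariant computation. A secondary, still delicate, difficulty is checking local solvability uniformly in $\theta$ at the archimedean places and at small residue characteristic primes; here the explicitness of the model, rather than any abstract existence argument, is what makes the verification tractable.
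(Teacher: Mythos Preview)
Your high-level framework is right and matches the paper: arithmetic parameters are produced over an arbitrary $K$ via \v{C}ebotarev (applied to ray class fields), the obstruction is a quaternion Brauer class, and local solvability is verified place by place with Hensel. The paper's del Pezzo is indeed an intersection of two quadrics, and both have the norm shape $x^2-az^2=\cdots$ and $x^2-ay^2=\cdots$ (not just one), with the obstructing class $(a,b(u-A_\theta v)/v)$.

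There is, however, a genuine architectural gap in your treatment of the hyperelliptic family. The paper does \emph{not} construct an independent Brauer class on $\textbf{X}_\theta$ of the form $(P(x),\alpha)$. Instead it builds an explicit finite morphism $\delta_\theta\colon \textbf{X}_\theta\to\textbf{Y}_\theta\subset\textbf{S}_\theta$ of degree $\tfrac{g+1}{2}$ from the hyperelliptic curve into a genus~$1$ hyperplane section of the del Pezzo surface, via $(s,t)\mapsto(0:C_\theta t^{(g+1)/2}:s:t^{g+1}:1)$. The non-existence of a degree~$1$ zero-cycle on $\textbf{X}_\theta$ is then \emph{inherited} from the already-established obstruction on $\textbf{S}_\theta$; no separate invariant computation on the curve is needed. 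Your proposed parity argument with Hilbert symbols over the roots of $P$ and $Q$ is not how the argument runs, and it is unclear it could be made to work uniformly in $\theta$.

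Correspondingly, your explanation of where $g\equiv1\pmod4$ enters is off. In the paper, oddness of $g$ is what is used throughout the obstruction calculation (it controls parities of $a$-adic valuations of $C_\theta,D_\theta$ and forces $a$ to be a square in residue fields where $D_\theta$ vanishes). The sharper condition $g+1\mid 4h+2$, which is equivalent to $g\equiv1\pmod4$ for some $h$, is used \emph{only} for local solvability of $\textbf{X}_\theta$ at the prime $(a)$ when $\v_a(\theta)<0$: one needs $A_{\tilde\theta}$ to be a $(g+1)$-th power in $K_a$, and the divisibility is exactly what makes the relevant power of $a$ a $(g+1)$-th power. So the congruence governs existence of local points on the curve, not the parity of the obstruction.
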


In order to compare, we briefly recall Bright's result \cite[Theorem 1.1]{Bright18}. It roughly states that, under certain cohomological assumptions, the specializations  of  classes in the algebraic Brauer group of the generic fiber do not obstruct the Hasse principle on 100\% of the fibers.
While our result goes to a totally opposite conclusion: for the families $\textbf{S}\to\P^1$ of del Pezzo surfaces, violations of the Hasse principle on fibers $\textbf{S}_\theta$ over any rational points $\theta\in\P^1(K)$ are all explained by an obstruction coming from  quaternion classes $\mathcal{A}_\theta\in\Br(\textbf{S}_\theta)$ defined in the formula \eqref{brclassA}.
These classes are the specialization of a single uniform class of the same form $\mathcal{A}_\theta\in\Br(k(\eta)(\textbf{S}_\eta))$ (with $\theta$ viewed as an indeterminate) which actually belongs to the algebraic Brauer group $\Br_1(\textbf{S}_\eta)$ of the generic fiber.

Our method  differs from those for curves mentioned above by Colliot-Th\'el\`ene, Poonen, and Dong Quan. Our argument is a rather straightforward application of the Brauer\textendash Manin obstruction. We check fiber by fiber the local solvability and the nonexistence of global rational points by computing the relevant Brauer\textendash Manin set.
One of the advantages of our argument is that it works explicitly in general over arbitrary number fields. It is hard to imagine that such a direct method works, since there is  great difficulty in the choice of \emph{arithmetic} parameters and the choice of the form of the defining equations in the construction. We need to make sure that the parameters give correct values for the Brauer\textendash Manin pairing.
%In particular, it happens that the local evaluations of the Brauer elements do not depend on the \emph{algebraic} parameter, even though we are not able to give a theoretical explanation.

We look at some consequences of Theorem \ref{mainthm}.
The case $g=1$ allows us to obtain the following corollary which answers again the main question addressed in \cite[\S 1]{CTP00}. Moreover, our solution is given  over an arbitrary number field rather than  $\Q$ and given by explicit formulas.

\begin{corollary}[Corollary \ref{cor-sha}]\label{cor-introsha}
There exist explicit algebraic families of elliptic curves $\textbf{E}\To\P^1$ such that
\begin{itemize}
\item for all rational points $\theta\in\P^1(K)$, the fiber $\textbf{E}_\theta$ is an elliptic curve over $K$ such that $\sha(K,\textbf{E}_\theta)[2]$ contains a nonzero element given by the class of the algebraic family of torsors $[\textbf{X}_\theta]$,
\item  the $j$-invariant $j(\textbf{E}_\theta)$ is a nonconstant function on $\theta\in\P^1(K)$.
\end{itemize}
\end{corollary}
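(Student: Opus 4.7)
The plan is to deduce the corollary from the main theorem with $g=1$ by forming the relative Jacobian of the constructed family of curves.

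First, I would specialize the main theorem to $g=1$, obtaining explicit algebraic families $\textbf{X}\to\P^1$ of genus $1$ hyperelliptic curves over $K$, each of whose $K$-fibers violates the Hasse principle. Thus for every $\theta\in\P^1(K)$ the fiber $\textbf{X}_\theta$ is a smooth projective genus $1$ curve having a $K_v$-point for every place $v$ but no $K$-rational point.

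Next, I would construct the associated family of elliptic curves. Since $\textbf{X}\to\P^1$ is a family of genus $1$ hyperelliptic curves, on a dense open of the base it admits a binary quartic model $y^2=f(x)$ with $f\in K(\theta)[x]$ of degree $4$ in $x$. Applying the classical formulas which send such a binary quartic to the short Weierstrass equation of its Jacobian (expressed through the two invariants $I(f)$ and $J(f)$ of the quartic $f$) produces an explicit algebraic family $\textbf{E}\to\P^1$ of elliptic curves over $K$ whose fiber $\textbf{E}_\theta$ is canonically identified with $\Jac(\textbf{X}_\theta)$.

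The first bullet then follows directly. Each $\textbf{X}_\theta$ is a torsor under $\textbf{E}_\theta$, and the hyperelliptic double cover $\textbf{X}_\theta\to\P^1$ supplies a $K$-rational divisor class of degree $2$, so the period of this torsor divides $2$. Its class in $\H^1(K,\textbf{E}_\theta)$ is therefore $2$-torsion, is nonzero because $\textbf{X}_\theta(K)=\varnothing$, and lies in $\sha(K,\textbf{E}_\theta)$ because $\textbf{X}_\theta$ has local points everywhere; the class is represented by the algebraic family $\textbf{X}\to\P^1$ itself, viewed over $\theta$.

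For the second bullet I would compute $j(\textbf{E}_\theta)$ as an explicit rational function of $\theta$ from the invariants $I$, $J$ of the quartic defining $\textbf{X}_\theta$, and verify that it is nonconstant in $\theta$. The main obstacle lies here: one must either check nonconstancy directly from the explicit formula produced by the construction, or exploit the flexibility of the main theorem — which yields infinitely many such families — to select one for which $j(\textbf{E}_\theta)$ is genuinely a nonconstant function of $\theta$.
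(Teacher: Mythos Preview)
Your proposal is correct and follows essentially the same approach as the paper: form the relative Jacobian of the $g=1$ family $\textbf{X}\to\P^1$, interpret each $\textbf{X}_\theta$ as a torsor of period dividing $2$ (the paper uses restriction--corestriction from the quadratic extension over which $t=0$ gives a point, which is equivalent to your degree-$2$ divisor argument), and then check that the $j$-invariant is nonconstant. The ``main obstacle'' you flag is resolved in the paper simply by writing down the explicit formula
\[
j(\textbf{E}_\theta)=\frac{16\bigl[(A_\theta-B_\theta)^2+16A_\theta B_\theta\bigr]^3}{A_\theta B_\theta(A_\theta-B_\theta)^4},
\]
which is visibly a nonconstant rational function of $\theta$; no appeal to the infinitude of families is needed.
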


Regardless of being a family, look at a single fiber over $\theta=0$ of Theorem \ref{mainthm}, as explained in Remark \ref{remarktheta=0}, the assumption $g\equiv1~\mathrm{mod}~4$ can be loosened to $2\nmid g$. In other words, we have the following corollary.
\begin{corollary}\label{cor-at0}
Given any positive odd integer $g$ and an arbitrary number field $K$, there exist infinitely many explicit hyperelliptic curves over $K$ of genus $g$ violating the Hasse principle.
\end{corollary}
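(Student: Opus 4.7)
The plan is to specialize the hyperelliptic family $\textbf{X}\to\P^1$ from Theorem \ref{oddgcurvearithmetic} at the point $\theta=0$, write down the resulting explicit equation for $\textbf{X}_0$, and then redo the Brauer--Manin computation in this special case, tracking exactly where the congruence $g\equiv 1\pmod 4$ had been used. The goal is to show that the portion of the obstruction argument that required $g\equiv 1\pmod 4$ disappears at $\theta=0$, leaving only the parity condition $2\nmid g$.

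First I would make the fiber explicit: substituting $\theta=0$ into the defining equation of $\textbf{X}$ produces a concrete curve $y^2=f_0(x)$ over $K$ whose branch data is determined entirely by the \emph{arithmetic} parameters chosen in the construction (they are independent of the \emph{algebraic} parameter $\theta$, as emphasized in the introduction). I would check local solubility place by place, reusing the same Hensel-lifting arguments from the proof of Theorem \ref{oddgcurvearithmetic}: at non-archimedean primes the reduction of $f_0$ at the chosen primes still has a smooth $K_v$-point, and at real places the sign analysis of $f_0$ on suitable intervals (or the existence of large enough $x$) gives $\textbf{X}_0(K_v)\neq\emptyset$. Nothing in this step should fail when $g$ is merely odd.

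The key step is the Brauer--Manin computation. From the construction the obstructing element $\mathcal{A}\in\Br(\textbf{X})$ is a quaternion algebra whose local invariants at the finite list of bad places add up to a nonzero element of $\Q/\Z$. Pulling back to $\textbf{X}_0$, the Hilbert-symbol evaluations split into a $\theta$-independent piece, coming from the chosen arithmetic parameters, and a $\theta$-dependent piece, whose $\theta=0$ value trivializes (either because a factor becomes a square or because a Hilbert symbol with a unit at the relevant prime vanishes). The residue $g\bmod 4$ had entered in the original argument precisely through parity in a power-residue-symbol calculation that involved $\theta$; removing the $\theta$-dependent contribution reduces that calculation to one that needs only $g$ odd, so $\sum_v\inv_v\mathcal{A}(P_v)\neq 0$ for any adelic point $(P_v)$. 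By the Brauer--Manin obstruction this rules out $\textbf{X}_0(K)$, proving the corollary.

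The main obstacle I expect is the bookkeeping of the local invariants: one has to identify the precise term in each $\inv_v$ that carries the $\theta$-dependence, verify that at $\theta=0$ it contributes $0$ in $\Q/\Z$ rather than altering other contributions, and check that the remaining $\theta$-independent sum is still nonzero under the weaker hypothesis $2\nmid g$. Once that identification is made (essentially the content of Remark \ref{remarktheta=0}), the rest is a mechanical check that mirrors the proof of Theorem \ref{oddgcurvearithmetic}.
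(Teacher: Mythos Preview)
Your approach is essentially the paper's (specialize at $\theta=0$, as in Remark~\ref{remarktheta=0}), and it is correct in outline. However, you have mislocated where the condition $g\equiv 1\pmod 4$ (equivalently $g+1\mid 4h+2$) is actually used. It does \emph{not} enter the Brauer--Manin invariant computation at all: that computation is carried out on $\textbf{S}_\theta$ (Theorem~\ref{dP4arithmetic}(2)), and the only role of $g$ there is to ensure the morphism $\delta_\theta:\textbf{X}_\theta\to\textbf{S}_\theta$ exists, which needs $t^{(g+1)/2}$ and hence only $g$ odd. The condition $g+1\mid 4h+2$ is used exclusively in the \emph{local solvability} step of Theorem~\ref{oddgcurvearithmetic}(1), namely at the place $\pi=a$ in Cases~$\boldsymbol{\infty}.3$ and $\boldsymbol{\theta^-}.3$, where one must show that $A_{\tilde\theta}$ is a $(g+1)$-th power modulo $a$.

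At $\theta=0$ those cases simply do not occur; moreover the parameters $h$ and $\theta$ drop out of the defining equations of $\textbf{X}_0$, $\textbf{Y}_0$, $\textbf{S}_0$ entirely. So your anticipated ``bookkeeping of local invariants'' is unnecessary: the invariant sum for $\textbf{S}_0$ is already the $\theta$-independent Case~$\boldsymbol{0}$ of the proof of Theorem~\ref{dP4arithmetic}(2), requiring no hypothesis on $g$, and pulling it back via $\delta_0$ uses only $g$ odd. The check is thus simpler than you anticipate.
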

It turns out  that even this particular case fills in some gaps in the literature. It is a bit surprising that the question \cite[Conjecture 1 in \S 5]{Clark09} by Clark, whether there exist genus $1$ curves violating the Hasse principle over every number field, has not been solved until very recently by Wu. He proved in \cite{WuHan23} the existence of such curves and he constructed explicit examples in \cite{WuHan22} if the number field does not contain $\sqrt{-1}$.  Corollary \ref{cor-at0} recovers and improves Wu's results by removing the technical assumption $\sqrt{-1}\notin K$. In \cite{WuHan23},  the existence of the curve comes from the fibration method applied to a certain Lefschetz pencil in a degree $4$ del Pezzo surface failing the Hasse principle. It turns out that the case $\theta=0$ of Theorem \ref{XYSarithmetic} is a direct explicit realization of the hyperplane intersection without presenting the Lefschetz pencil.
We also refer to \cite{DongQuan13} and \cite[\S 4]{Clark09} for some history of seeking curves of prescribed genus violating the Hasse principle and related results.

Finally we would also like to mention a possibly related result in arithmetic statistics. In \cite{BGW17},  Bhargava, Gross, and Wang proved that a positive proposition of hyperelliptic curves of genus $g>0$ over $\Q$ violate the Hasse principle, but explicit examples cannot be deduced directly.

\subsection*{Organization of the paper}
First of all, in \S \ref{sectionparameter}, we choose by global class field theory suitable values of arithmetic parameters which are used throughout the paper. Using these parameters, we construct algebraic families of degree $4$ del Pezzo surfaces and hyperelliptic curves in \S \ref{sectionalgebraicfamilies}. Then we study their geometry in \S \ref{sectiongeometry}. And in \S \ref{sectionarithmetic}, we study their arithmetic properties and prove our main result on the violation of the Hasse principle. Finally, we also apply Harari's fibration results to prove that the total spaces have Brauer\textendash Manin obstruction to the Hasse principle in \S \ref{sectiontotalspace}.

\subsection*{Notation}
In this paper, the base field $K$ is a number field. We fix an algebraic closure $\bar{K}$. We denote by $\mathcal{O}_K$ its ring of integers. Let $\Omega$ (respectively $\Omega^\infty$) be the set of places (respectively archimedean places) of $K$. For any place $\pi\in\Omega$, the completion of $K$ with respect to $\pi$ is denoted by $K_{\pi}$, on which $(-,-)_{\pi}$ denotes the local Hilbert symbol. When $\pi$ is a non-archimedean place, we denote by $\F_\pi$ the residue field of $K_\pi$. In most of the cases that appear in the paper, the specific prime ideal is generated by a single algebraic integer $a\in\mathcal{O}_K$, then we write simply $K_a$ for $K_\pi$ and $\F_a$ for $\F_\pi$.

We will denote by $L$ a local field containing $K$. Then there exists a place $\pi$ of $K$ such that $L$ is a finite extension of $K_\pi$. If $\pi$ is non-archimedean, we fix a uniformizer $\omega$ of $\mathcal{O}_L$ generating its maximal ideal. The valuation $\v_\omega$ of $L$ inducing the $\pi$-adic topology on $K_\pi$ is normalized such that $\v_\omega(\omega)=1$. The residue field $\mathbb{F}_\omega$ is a finite extension of $\mathbb{F}_\pi$. Elements in finite fields will be denoted by fraktur letters, for example, the reduction $\mathrm{mod}~ \omega$ of $\omega$-adic integers $a$ and $A$ will be written as $\mathfrak{a}$ and $\mathfrak{A}$.

%%%%%%%%%%%%%%%%%%%%%%%%%%%%%%%%%%%%%%%%%%%%%%%%%%%%%
\section{Existence of arithmetic parameters}\label{sectionparameter}
%%%%%%%%%%%%%%%%%%%%%%%%%%%%%%%%%%%%%%%%%%%%%%%%%%%%%
In this section, we prove the following key proposition to obtain suitable arithmetic parameters $a,b,c,d\in\mathcal{O}_K$ which lead to the construction of  our explicit hyperelliptic curves and del Pezzo surfaces of degree $4$.

\begin{proposition}\label{parameter}
Let $K$ be a number field and $\Omega^0$ be a finite set of non-archimedean odd places of $K$.

Then there exist algebraic integers $a,b,c,d\in\mathcal{O}_K$ such that the following conditions are satisfied.
\begin{enumerate}[label=(\roman*)]
\item The integers  $a,b,c,d$ generate distinct prime ideals of $\mathcal{O}_K$ not corresponding to a place $\pi\in\Omega^0$ or a place $\pi\mid2$;
\item $a,b\in K_{\pi}^{*2}$ for any place $\pi\in \Omega^0\cup\Omega^\infty$ or $\pi\mid2$;
\item $2,-1\in K_{a}^{*2}$ \textup{;} $2,-1\in K_{b}^{*2}$;
\item $a\equiv 1~\mathrm{mod}~ b$ and  $bc^{2}d\equiv1 ~\mathrm{mod}~ a$;
\item $c\notin K_{a}^{*2}$, $c\notin K_{b}^{*2}$ \textup{;} $d\notin K_{b}^{*2}$, $d\in K_{c}^{*2}$;
\item $a\in K_{b}^{*2}$, $a\notin K_{c}^{*2}$; $b\in K_{a}^{*2}$, $b\notin K_{c}^{*2}$; $d\in K_{a}^{*2}$;
\item\label{bcd+2} $a\nmid bcd+2$.
\end{enumerate}
\end{proposition}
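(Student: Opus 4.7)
The plan is to construct $b$, $a$, $c$, $d$ one at a time (in this order), choosing each as a generator of a suitably selected principal prime ideal by means of global class field theory and the Chebotarev density theorem applied to compositums of ray class fields of $K$ with auxiliary quadratic extensions. Set $S=\Omega^0\cup\Omega^\infty\cup\{\pi:\pi\mid 2\}$ and fix an integral modulus $\mathfrak{m}_0$ (including every real place, for positivity) supported on $S$ with high enough exponents that $\alpha\equiv 1\pmod{\mathfrak{m}_0}$ forces $\alpha\in K_\pi^{*2}$ for every $\pi\in S$; Hensel's lemma guarantees that such an $\mathfrak{m}_0$ exists.

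Applying Chebotarev in the compositum of the ray class field $H_{\mathfrak{m}_0}$ with $K(\sqrt{-1},\sqrt{2})$ yields infinitely many primes $\mathfrak{b}$ trivial in $\mathrm{Cl}_{\mathfrak{m}_0}(K)$ and splitting completely in $K(\sqrt{-1},\sqrt{2})$; any such $\mathfrak{b}$ outside $\Omega^0\cup\{\pi\mid 2\}$ gives a generator $b\equiv 1\pmod{\mathfrak{m}_0}$ satisfying (ii) and (iii) for $b$. Repeating with the larger modulus $\mathfrak{m}_0\cdot\mathfrak{b}$ and the extension $K(\sqrt{-1},\sqrt{2},\sqrt{b})$ produces a distinct prime $\mathfrak{a}$ whose generator $a\equiv 1\pmod{\mathfrak{m}_0\mathfrak{b}}$ satisfies (ii) and (iii) for $a$, the first half of (iv), and both $a\in K_b^{*2}$ and $b\in K_a^{*2}$ in (vi).

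Next I would pick $c$ by Chebotarev in $K(\sqrt{a},\sqrt{b})/K$ with Frobenius nontrivial on both $\sqrt{a}$ and $\sqrt{b}$, so that $c$ is a nonsquare modulo each of $\mathfrak{a}$ and $\mathfrak{b}$, which handles the first half of (v). The reciprocal statements $a\notin K_c^{*2}$ and $b\notin K_c^{*2}$ of (vi) then fall out of Hilbert reciprocity: since $a$ is a square in every $K_\pi$ outside $\{\mathfrak{a},\mathfrak{c}\}$, the product formula $\prod_\pi(a,c)_\pi=1$ collapses to $(a,c)_\mathfrak{a}=(a,c)_\mathfrak{c}$, and analogously for $(b,c)$. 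Finally I choose $d$ from the ray class field of modulus $\mathfrak{m}_0\cdot\mathfrak{a}$, restricted to the residue class $(bc^2)^{-1}\pmod{\mathfrak{a}}$ imposed by the second half of (iv), combined with the Chebotarev condition in $K(\sqrt{b},\sqrt{c})/K$ asking that Frobenius be nontrivial only on $\sqrt{b}$; this delivers the $d$-parts of (v). The remaining condition $d\in K_a^{*2}$ of (vi) is then free, because $(bc^2)^{-1}$ is a square modulo $\mathfrak{a}$ by virtue of $b\in K_a^{*2}$ together with $c^2$ being trivially a square.

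Condition (vii) turns out to cost nothing: the relation $bc^2 d\equiv 1\pmod{\mathfrak{a}}$ together with $a\mid bcd+2$ would force $c\equiv -\tfrac{1}{2}\pmod{\mathfrak{a}}$, but (iii) makes $-1/2$ a square in $\F_a$, contradicting the nonsquareness of $c$ modulo $\mathfrak{a}$ imposed in (v). The main technical bookkeeping, and the only real obstacle, is to verify at each step that the prescribed ray class condition combined with the splitting conditions in the auxiliary quadratic extensions defines a nonempty Frobenius class in the Galois group of the compositum. This reduces to checking that each newly introduced quadratic extension $K(\sqrt{z})$ is linearly disjoint over $K$ from the current ray class field, which holds because $K(\sqrt{z})$ is ramified at a prime outside the modulus in play. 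The pleasant feature of the whole argument is that, once the construction is organized in this order, conditions (vi) and (vii) come for free from Hilbert reciprocity and the constraints already imposed at earlier steps.
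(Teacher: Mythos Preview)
Your proposal is correct and follows essentially the same route as the paper: construct $b,a,c,d$ in that order via Chebotarev applied to suitable ray class fields (the paper uses an explicit modulus $8\prod_{\pi\in\Omega^0}\pi$ and observes that $\sqrt{-1},\sqrt{2}$ already lie in the ray class field, whereas you take the compositum with $K(\sqrt{-1},\sqrt{2})$ directly), then deduce (vi) from quadratic reciprocity/Hilbert product formula and (vii) as an automatic consequence of (iii)--(v). The only imprecision is the phrase ``$a$ is a square in every $K_\pi$ outside $\{\mathfrak{a},\mathfrak{c}\}$'': what you actually need (and what holds) is that $(a,c)_\pi=1$ at all such $\pi$, which follows because $a,c$ are both units at odd $\pi\notin\{\mathfrak{a},\mathfrak{c}\}$ and $a\in K_\pi^{*2}$ at $\pi\in S$.
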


\begin{remark}
When $\pi$ is an odd non-archimedean place, for an invertible  element $\alpha$ of $\mathcal{O}_{K_\pi}$, Hensel's lemma ensures that $\alpha$ is a square in $K_\pi^*$ if and only if the reduction   is a square in $\F^*_\pi$.
If $\beta$ is another invertible element of $\mathcal{O}_{K_\pi}$, then it follows that $\alpha, \beta\notin K_\pi^{*2}$ implies  that $\alpha\beta\in K_\pi^{*2}$ since it is the case in $\mathbb{F}_\pi^*$.

These apply to the $a,b,c,d$ (and their products) obtained in the proposition and we frequently make use of it in the forthcoming part of this paper without further mention.
\end{remark}

The existence of such arithmetic parameters is essentially a consequence of Chebotarev's density theorem. For a precise proof, we recall the setup of global class field theory, please refer to \cite[Chapter VI \S 7]{Neu99} for more details.

For a modulus $\mathfrak{m}$ of a number field $K$, let $K_\mathfrak{m}$ be the corresponding ray class field. The Artin reciprocity law says that the Artin map
$$\begin{array}{l@{}c@{}l}
\gamma :~ & I_{\mathfrak{m}}&\rightarrow \textup{Gal}(K_{\mathfrak{m}}/K)\\
& \mathfrak{p} &\mapsto \textup{Frob}_{\mathfrak{p}}
\end{array} $$
fits into  a short exact sequence
$$0\rightarrow P_{\mathfrak{m}} \rightarrow I_{\mathfrak{m}} \xrightarrow{\gamma} \textup{Gal}(K_{\mathfrak{m}}/K) \rightarrow 0,$$
where $I_{\mathfrak{m}}$ is the group of fractional ideals that are coprime to the modulus $\mathfrak{m}$ and $P_{\mathfrak{m}}$ is its subgroup of principal fractional ideals $(\lambda)\in I_{\mathfrak{m}}$ such that $\v_{\pi}(\lambda-1)\geq \v_{\pi}(\mathfrak{m})$ for all non-archimedean places $\pi\mid \mathfrak{m}$ and $\lambda_{\pi}> 0$ for all real places $\pi\mid \mathfrak{m_{\infty}}$.

\begin{proof}[Proof of Proposition \ref{parameter}]

We will show in the following order the existence of parameters $b,a,c,d\in\mathcal{O}_K$ satisfying desired conditions.

We take the modulus $\mathfrak{m}$ to be $\displaystyle(8\prod_{\pi\in \Omega^0}\pi) \cdot \mathfrak{m}_\infty$ where $\mathfrak{m}_\infty$ is the formal product of all real places.
As $8\mid\mathfrak{m}$, we have the inclusion $K_8\subset K_\mathfrak{m}$ between ray class fields. Let $\mathfrak{p}\nmid 2$ be a prime ideal of $\mathcal{O}_K$ that splits completely in $K_8$. It follows from the Artin reciprocity law that $\mathfrak{p}$ is a principal ideal generated by a certain algebraic integer $p\in \mathcal{O}_K$ such that $p\equiv1~\mathrm{mod}~8\mathcal{O}_K$. Hensel's lemma then implies that $p\in K_\pi^{*2}$ for all places $\pi\mid2$ and therefore the Hilbert symbols $(\alpha,p)_\pi=1$ for all such places and for $\alpha=-1$ or $2$. It turns out that $(\alpha,p)_p=1$ according to the product formula for Hilbert symbols. As a consequence, $\alpha$ is a square modulo $p$ by \cite[V.3.4 and V.3.5]{Neu99} and hence $\mathfrak{p}$ splits completely in $K(\sqrt{\alpha})$. By Chebotarev's density theorem, we find that $K_8$ and $K_\mathfrak{m}$ contain $\sqrt{-1}$ and $\sqrt{2}$, cf. \cite[VII.13.9]{Neu99}.

Chebotarev's density theorem applied to the extension $K_\mathfrak{m}/K$ shows that there exists a prime ideal $\mathfrak{p}$ of $K$ not dividing $\mathfrak{m}$ mapping to the neutral element of $\textup{Gal}(K_\mathfrak{m}/K)$. According to the exact sequence above given by global class field theory, the prime ideal $\mathfrak{p}$ must be a principal ideal generated by a certain algebraic integer which we denote by $b\in P_\mathfrak{m}$. Then by definition of $P_\mathfrak{m}$ we have
\begin{itemize}
\item $b\in K_\pi^{*2}$ when $\pi$ is a real place.
\end{itemize}
Because $\v_\pi(b-1)\geq\v_\pi(\mathfrak{m})$ for $\pi\in\Omega^0$ or $\pi\mid2$, it follows from Hensel's lemma that
\begin{itemize}
\item  $b\in K_\pi^{*2}$ for a place $\pi\in\Omega^0$ or $\pi\mid2$.
\end{itemize}
As the prime ideal $\mathfrak{p}=(b)$ splits completely in $K_\mathfrak{m}$, the local field $K_b$ contains $K_\mathfrak{m}$ in which $-1$ and $2$ are squares, so
\begin{itemize}
\item $-1,2\in K_b^{*2}$.
\end{itemize}

Applying the same argument to $\displaystyle\mathfrak{m}'=b\mathfrak{m}=(8b\prod_{\pi\in \Omega^0}\pi) \cdot \mathfrak{m}_\infty$ instead of $\mathfrak{m}$, we obtain an algebraic integer $a\in\mathcal{O}_K$ generating a prime ideal not dividing $\mathfrak{m}'=b\mathfrak{m}$ such that
\begin{itemize}
\item $\v_b(a-1)\geq\v_b(b)=1$ i.e. $a\equiv1~\mathrm{mod}~ b$;
\item $a\in K_\pi^{*2}$ when $\pi$ is a real place or $\pi\mid2$ or $\pi\in\Omega^0$;
\item $-1,2\in K_a^{*2}$.
\end{itemize}

A version of the Chinese remainder theorem and Dirichlet's theorem on arithmetic progressions (the forthcoming Lemma \ref{dirichlet}) combined with Hensel's lemma imply the existence of an algebraic integer $c\in\mathcal{O}_K$ generating a prime ideal not diving the modulus $ab\mathfrak{m}$ such that
\begin{itemize}
\item  $c\notin K_{a}^{*2}$, $c\notin K_{b}^{*2}$.
\end{itemize}

We repeat the same argument to obtain an algebraic integer $d\in\mathcal{O}_K$ generating a prime ideal not diving the modulus $abc\mathfrak{m}$ such that
\begin{itemize}
\item $d\notin K_{b}^{*2}$, $d\in K_{c}^{*2}$,
\item $d\equiv (bc^{2})^{-1} ~\mathrm{mod}~  a$.
\end{itemize}

The application of Hensel's lemma and a generalised version of quadratic reciprocity (the forthcoming Lemma \ref{reciprocitylaw}), shows that
\begin{itemize}
\item $a\in K_{b}^{*2}$, $a\notin K_{c}^{*2}$;
\item $b\in K_{a}^{*2}$, $b\notin K_{c}^{*2}$;
\item $d\in K_{a}^{*2}$;
\end{itemize}

Finally, if $a\mid bcd+2$ then $1\equiv bc^2d\equiv-2c~\mathrm{mod}~ a$. But this contradicts  $-1,2\in K_a^{*2}$ and $c\notin K_a^{*2}$ with the help of Hensel's lemma.
\end{proof}

The following lemmas are well-known, we list them here for the convenience of the reader.

\begin{lemma}[cf. {\cite[Proposition 2.1]{Liang18}}]\label{dirichlet}
Let $\mathfrak{a}_{i}\subset \mathcal{O}_K~ (i = 1, . . . , s)$ be ideals that are pairwise prime to each other. Let $x_{i}\in \mathcal{O}_K$ be an element that is invertible in $\mathcal{O}_K / \mathfrak{a}_{i}$.
Then there exists a principal prime ideal $\mathfrak{p}=(\pi) \subset \mathcal{O}_K$ such that
\begin{itemize}
\item $\pi \equiv x_{i}~\mathrm{mod}~ \mathfrak{a}_{i}$ for all $i$.
\end{itemize}
Moreover, the Dirichlet density of such principal prime ideals is positive.
\end{lemma}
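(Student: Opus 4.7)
The plan is to deduce the lemma from Chebotarev's density theorem applied to the ray class field extension already set up in the proof of Proposition \ref{parameter}. First I would set $\mathfrak{m} := \prod_{i=1}^s \mathfrak{a}_i$, which is a well-defined modulus with trivial archimedean part since the $\mathfrak{a}_i$ are pairwise coprime. The classical Chinese remainder theorem then produces an element $x \in \mathcal{O}_K$ with $x \equiv x_i \mod \mathfrak{a}_i$ for every $i$. Since each $x_i$ is invertible modulo $\mathfrak{a}_i$, the element $x$ is coprime to $\mathfrak{m}$, so $(x) \in I_\mathfrak{m}$, and the Artin map sends it to a well-defined element $\sigma := \gamma((x)) \in \Gal(K_\mathfrak{m}/K)$.

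Next, I would apply Chebotarev's density theorem to the abelian extension $K_\mathfrak{m}/K$: the set of prime ideals $\mathfrak{p} \subset \mathcal{O}_K$ coprime to $\mathfrak{m}$ with $\textup{Frob}_\mathfrak{p} = \sigma$ has positive Dirichlet density equal to $1/[K_\mathfrak{m}:K]$. For any such $\mathfrak{p}$, the exact sequence of class field theory recalled in the excerpt forces $\mathfrak{p}(x)^{-1} \in P_\mathfrak{m}$, hence one may write $\mathfrak{p} = (x\mu)$ for some $\mu \in K^*$ with $(\mu)$ belonging to the subgroup $P_\mathfrak{m}$. The integrality of $\mathfrak{p}$ then implies that $\pi := x\mu$ is automatically an algebraic integer generating $\mathfrak{p}$, as required by the statement.

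Finally, I would verify the congruence conditions. Writing $\pi - x = x(\mu - 1)$ and using the defining relations on $\mu$, which assert that $\mu - 1$ has valuation at least that of $\mathfrak{m}$ at every finite place dividing $\mathfrak{m}$, combined with the fact that $(x)$ is coprime to $\mathfrak{m}$, one obtains $\pi \equiv x \equiv x_i \mod \mathfrak{a}_i$ for each $i$, as required. The main (mild) subtlety of the argument is the passage from a mere ray class equality to an actual integer generator with the prescribed residue class; this is resolved automatically by the very definition of the ray class group as $I_\mathfrak{m}/P_\mathfrak{m}$, whose relations are precisely tuned for such congruence conditions. The positive density statement then follows directly from the quantitative form of Chebotarev.
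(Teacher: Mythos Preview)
The paper does not actually prove this lemma; it is listed among the ``well-known'' auxiliary results and simply cited from \cite[Proposition 2.1]{Liang18}. So there is no proof in the paper to compare against.

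Your argument is correct and is the standard one: pass to the ray class field $K_{\mathfrak{m}}$ for the modulus $\mathfrak{m}=\prod_i\mathfrak{a}_i$, use CRT to produce $x$ with the prescribed residues, apply Chebotarev to find primes whose Frobenius equals $\gamma((x))$, and then read off from the exact sequence $0\to P_{\mathfrak{m}}\to I_{\mathfrak{m}}\to\Gal(K_{\mathfrak{m}}/K)\to0$ that any such $\mathfrak{p}$ differs from $(x)$ by an element of $P_{\mathfrak{m}}$, yielding a generator $\pi=x\mu$ with $\mu\equiv 1\bmod^{*}\mathfrak{m}$ and hence $\pi\equiv x_i\bmod\mathfrak{a}_i$. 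One small point of phrasing: membership in $P_{\mathfrak{m}}$ is a condition on the ideal, guaranteeing only that \emph{some} generator $\mu$ satisfies the congruence $\mu\equiv 1$; you implicitly choose such a $\mu$, which is fine. Also, your opening reference to ``the ray class field extension already set up in the proof of Proposition~\ref{parameter}'' is slightly misleading, since you in fact introduce your own modulus $\mathfrak{m}=\prod_i\mathfrak{a}_i$ rather than reuse the one from that proof; but this is cosmetic and does not affect correctness.
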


\begin{lemma}[cf. {\cite[Lemma 2.3]{Liang18}}]\label{reciprocitylaw}
Let $s,t\in\O_{K}$ be elements generating odd prime ideals. Assume that either  $s\equiv1~\mathrm{mod}~8\O_{K}$ or $t\equiv1~\mathrm{mod}~8\O_{K}$ and assume that for each real place either $s$ or $t$ is positive. Then $s$ is a square modulo the prime ideal $(t)$ if and only if $t$ is a square modulo the prime ideal $(s)$.
\end{lemma}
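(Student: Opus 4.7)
The plan is to deduce this generalized quadratic reciprocity law from the product formula for Hilbert symbols
\[
\prod_{\pi\in\Omega} (s,t)_{\pi} = 1,
\]
by showing that all local Hilbert symbols are trivial except possibly at the two places $\pi=(s)$ and $\pi=(t)$, where they encode precisely the residue-quadratic-character information we want to compare.

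First I would handle the ``easy'' places. At every complex place $(s,t)_\pi=1$ automatically; at every real place the hypothesis that $s$ or $t$ is positive in $K_\pi$ forces $(s,t)_\pi=1$. For a non-archimedean place $\pi$ lying above an odd rational prime and coprime to $(s)(t)$, both $s$ and $t$ are units in $\O_{K_\pi}$ and so the tame Hilbert symbol is $1$. The slightly delicate check is at the dyadic places $\pi\mid 2$. Here I would use the assumption that either $s\equiv 1\bmod 8\O_K$ or $t\equiv 1\bmod 8\O_K$: Hensel's lemma applied to $x^{2}-s$ (or $x^{2}-t$) at $\pi$, using the starting approximation $x_0=1$ together with $v_\pi(s-1)\geq v_\pi(8)=3e_\pi > 2e_\pi = v_\pi(4)$, shows that $s$ (or $t$) is a square in $K_\pi^{*}$, which makes $(s,t)_\pi=1$.

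Next I would compute the two remaining local symbols using the standard tame formula. At the place $\pi=(s)$, the element $s$ is a uniformizer and $t$ is a unit, so $(s,t)_s = 1$ if and only if $t$ is a square in the residue field $\F_s^{*}$, i.e.\ $t$ is a square modulo $(s)$. Symmetrically, $(s,t)_t = 1$ if and only if $s$ is a square modulo $(t)$. Substituting into the product formula gives $(s,t)_s = (s,t)_t$, which is precisely the claimed equivalence.

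The only genuine obstacle is the dyadic computation, so I would be careful to track ramification indices when invoking Hensel's lemma above $2$; the archimedean and unramified-finite cases are immediate. Everything else is just the bookkeeping of the product formula, which is a well-documented classical tool (see for instance the sections of \cite{Neu99} cited in the proof of Proposition \ref{parameter}).
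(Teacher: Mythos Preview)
Your argument is correct and is exactly the standard proof of this generalized reciprocity law. The paper itself does not supply a proof of Lemma~\ref{reciprocitylaw}; it simply records the statement as well-known and refers to \cite[Lemma 2.3]{Liang18}. In fact, the paper uses precisely your strategy (Hensel at dyadic places plus the Hilbert product formula) a few lines earlier in the proof of Proposition~\ref{parameter} to show that $-1$ and $2$ are squares modulo the chosen primes, so your approach is entirely in line with the paper's methods. One tiny point worth making explicit: you are implicitly using that $(s)\neq(t)$ when you say $t$ is a unit at the place $(s)$; if $(s)=(t)$ the statement is vacuous since both sides reduce to $0$, so there is no loss.
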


\section{Algebraic families of del Pezzo surfaces of degree $4$ and hyperelliptic curves}\label{sectionalgebraicfamilies}
In this section, we construct algebraic families parameterised by $\mathbb{P}^1$ of degree $4$ del Pezzo surfaces and curves. We will discuss their geometric properties in \S \ref{sectiongeometry} and arithmetic properties in \S \ref{sectionarithmetic}.

\subsection{Construction of algebraic families}\label{subsectionconstruction}\

Let $g\geq0$ and $h\geq0$ be integers.
Let $a,b,c,d\in\mathcal{O}_K$ be arithmetic parameters given by Proposition \ref{parameter} with $\Omega^0$ an arbitrary given finite set of non-archimedean odd places of $K$.

\subsection*{Families of surfaces}\

We are going to construct  algebraic families over $K$
$$^{h,g}\nosp\tau:^{h,g}\nosp\textbf{S}\To\mathbb{P}^{1}$$ of projective surfaces defined as follows by explicit equations.
%We will see later in \S \ref{sectionarithmetic} that their arithmetic properties do not depend on neither the choice of $\Omega^0$ nor the pair $(h,g)$.

We define $^{h,g}\nosp\textbf{S}'\subset\mathbb{P}^4\times\mathbb{A}^1$ by
\begin{equation*}
\left\{
\begin{split}
x'^{2}-az'^{2}   =  -b&[u'-a^{4h+3}\theta'^{2g+2}v'-bc^{2}d(a^{2h+1}b^{2h+1}\theta'^{g+1}-1)^{2}v']\\
      \cdot&[u'-a^{4h+3}\theta'^{2g+2}v'-bc^{2}d(a^{2h+1}b^{2h+1}\theta'^{g+1}-1)^{2}v' \\
                  &\quad\qquad\qquad\qquad\qquad\qquad\qquad-2c(a^{2h+1}b^{2h+1}\theta'^{g+1}-1)^{2}v']\\
x'^{2}-ay'^{2} =      -a&(a^{2h+1}\theta'^{g+1}-1)^{2}u'v'
\end{split}
\right.
\end{equation*}
with homogeneous coordinates $(x':y':z':u':v')$ of $\mathbb{P}^4$ and affine coordinate $\theta'$ of $\mathbb{A}^1$ and define $^{h,g}\nosp\tau':^{h,g}\nosp\textbf{S}'\To\mathbb{A}^1$ to be the natural projection.
We define $^{h,g}\nosp\textbf{S}''\subset\mathbb{P}^4\times\mathbb{A}^1$ by
\begin{equation*}
\left\{
\begin{split}
x''^{2}-az''^{2} = -b&[u''-a^{4h+3}v''-bc^{2}d(a^{2h+1}b^{2h+1}-\theta''^{g+1})^{2}v'']\\
\cdot&[u''-a^{4h+3}v''-bc^{2}d(a^{2h+1}b^{2h+1}-\theta''^{g+1})^{2}v''\\
&\qquad\qquad\qquad\qquad\qquad\qquad-2c(a^{2h+1}b^{2h+1}-\theta''^{g+1})^{2}v'']\\
x''^{2}-ay''^{2}= -a&(a^{2h+1}-\theta''^{g+1})^{2}u''v''
\end{split}
\right.
\end{equation*}
with homogeneous coordinates $(x'':y'':z'':u'':v'')$ of $\mathbb{P}^4$ and affine coordinate $\theta''$ of $\mathbb{A}^1$ and define $^{h,g}\nosp\tau'':^{h,g}\nosp\textbf{S}''\To\mathbb{A}^1$ to be the natural projection.
When $\theta'\neq0$ and $\theta''\neq0$, we can identify these two Zariski open sets of $^{h,g}\nosp\textbf{S}'$ and $^{h,g}\nosp\textbf{S}''$ via
\begin{align*}
x''&=x'/\theta'^{2g+2},&y''&=y'/\theta'^{2g+2},\\
z''&=z'/\theta'^{2g+2},&u''&=u'/\theta'^{2g+2},\\
v''&=v',&\theta''&=1/\theta'.
\end{align*}
We glue ${^{h,g}\nosp\tau'}:{^{h,g}\nosp\textbf{S}'}\To\mathbb{A}^1$ and ${^{h,g}\nosp\tau''}:{^{h,g}\nosp\textbf{S}''}\To\mathbb{A}^1$ via the identification above to obtain ${^{h,g}\nosp\tau}:{^{h,g}\nosp\textbf{S}}\To\mathbb{P}^1$. The variety $^{h,g}\nosp\textbf{S}$ lies inside a variety $\textbf{P}^4$, which is a $\P^4$ bundle over the base $\P^1$ obtained by gluing two copies of $\P^4\times\mathbb{A}^1$ via the identification above. We define $\textbf{H}\subset\textbf{P}^4$ by $x'=x''=0$, then $\textbf{H}\To\mathbb{P}^1$ is an algebraic family of hyperplanes. We denote $^{h,g}\nosp\textbf{Y}=\textbf{H}\cap{^{h,g}\nosp\textbf{S}}$.

\subsection*{Families of  curves}\

Fix an integer $g\geq0$. For every $g$ and an arbitrary positive integer $h\geq0$, we are going to construct algebraic families over $K$
$${^{h,g}\nosp\sigma}:{^{h,g}\nosp\textbf{X}}\To\mathbb{P}^1$$
of projective curves defined as follows by explicit equations.
%However, compared to the surfaces above, we will see later in \S \ref{sectionarithmetic} that their arithmetic properties will depend on the choice of $\Omega^0$ which will be given according to the integer $g$.

Consider the surfaces $^{h,g}\nosp\textbf{X}'_{s,t}$ and $^{h,g}\nosp\textbf{X}'_{S,T}$ in $\mathbb{A}^{2}\times\mathbb{A}^{1}$ defined respectively by the following equations with affine coordinates $(s',t',\theta')$ and $(S',T',\theta')$
\begin{equation*}
\begin{split}
as'^{2}&=b[t'^{g+1}-a^{4h+3}\theta'^{2g+2}-bc^{2}d(a^{2h+1}b^{2h+1}\theta'^{g+1}-1)^{2}]\\
&\quad\cdot[t'^{g+1}-a^{4h+3}\theta'^{2g+2}-bc^{2}d(a^{2h+1}b^{2h+1}\theta'^{g+1}-1)^{2}\\
&\qquad\qquad\qquad\qquad\qquad\qquad\qquad\qquad\qquad-2c(a^{2h+1}b^{2h+1}\theta'^{g+1}-1)^{2}]
\end{split}
\end{equation*}
and
\begin{equation*}
\begin{split}
aS'^{2}&=b[1-a^{4h+3}\theta'^{2g+2}T'^{g+1}-bc^{2}d(a^{2h+1}b^{2h+1}\theta'^{g+1}-1)^{2}T'^{g+1}]\\
&\quad\cdot [1-a^{4h+3}\theta'^{2g+2}T'^{g+1}-bc^{2}d(a^{2h+1}b^{2h+1}\theta'^{g+1}-1)^{2}T'^{g+1}\\
&\qquad\qquad\qquad\qquad\qquad\qquad\qquad\qquad-2c(a^{2h+1}b^{2h+1}\theta'^{g+1}-1)^{2}T'^{g+1}].
\end{split}
\end{equation*}
When $t'\neq0$ and $T'\neq0$ we glue these open subsets together via  identifications
\begin{align*}
T'&=1/t',  &S'&=s'/t'^{g+1},\\
t'&=1/T',  &s'&=S'/T'^{g+1},
\end{align*}
to obtain $^{h,g}\nosp\textbf{X}'$. We have a natural projection to the coordinate $\theta'$ denoted by $^{h,g}\nosp\sigma':{^{h,g}\nosp\textbf{X}'}\To\mathbb{A}^1$, which is a projective morphism.

We also consider the surfaces $^{h,g}\nosp\textbf{X}''_{s,t}$ and $^{h,g}\nosp\textbf{X}''_{S,T}$ in $\mathbb{A}^{2}\times\mathbb{A}^{1}$ defined respectively by the following equations with affine coordinates $(s'',t'',\theta'')$ and $(S'',T'',\theta'')$
\begin{equation*}
\begin{split}
as''^{2}&=b[t''^{g+1}-a^{4h+3}-bc^{2}d(a^{2h+1}b^{2h+1}-\theta''^{g+1})^{2}]\\
&\quad\cdot [t''^{g+1}-a^{4h+3}-bc^{2}d(a^{2h+1}b^{2h+1}-\theta''^{g+1})^{2}\\
&\qquad\qquad\qquad\qquad\qquad\qquad\qquad\qquad\qquad-2c(a^{2h+1}b^{2h+1}-\theta''^{g+1})^{2}]\\
\end{split}
\end{equation*}
and
\begin{equation*}
\begin{split}
aS''^{2}&=b[1-a^{4h+3}T''^{g+1}-bc^{2}d(a^{2h+1}b^{2h+1}-\theta''^{g+1})^{2}T''^{g+1}]\\
&\quad\cdot [1-a^{4h+3}T''^{g+1}-bc^{2}d(a^{2h+1}b^{2h+1}-\theta''^{g+1})^{2}T''^{g+1}\\
&\qquad\qquad\qquad\qquad\qquad\qquad\qquad\qquad-2c(a^{2h+1}b^{2h+1}-\theta''^{g+1})^{2}T''^{g+1}].
\end{split}
\end{equation*}
When $t''\neq0$ and $T''\neq0$ we glue these open subsets together via identifications
\begin{align*}
T''&=1/t'',   &S''&=s''/t''^{g+1},\\
t''&=1/T'',  &s''&=S''/T''^{g+1},
\end{align*}
to obtain $^{h,g}\nosp\textbf{X}''$. We have a natural projection to the coordinate $\theta''$ denoted by $^{h,g}\nosp\sigma'':~^{h,g}\nosp\textbf{X}''\To\mathbb{A}^1$, which is a projective morphism.

Finally, when $\theta'\neq0$ and $\theta''\neq0$, we glue $\sigma'$ and $\sigma''$ via compatible identifications
\begin{align*}
 s''&=s'/\theta'^{2g+2},  &t''&=t'/\theta'^2, \\
 S''&=S', &T''&=T'\theta'^2,\\
\theta''&=1/\theta',&&
\end{align*}
to obtain a morphism ${^{h,g}\nosp\sigma}:{^{h,g}\nosp\textbf{X}}\to\mathbb{P}^1$.

\subsection{Morphisms between algebraic families}\label{subsectionmorphisms}\

Suppose that $g$ is odd, we are going to relate our families of curves $^{h,g}\nosp\textbf{X}\To\P^1$ to our families of surfaces $^{h,g}\nosp\textbf{S}\To\P^1$.
Recall that $^{h,g}\nosp\textbf{S}$ sits inside a $\P^4$ bundle $\textbf{P}^4$ over $\P^1$.
We define a morphism ${^{h,g}\nosp\delta}:{^{h,g}\nosp\textbf{X}}\To\textbf{P}^4$ as follows.
First of all, the formulas
\begin{equation*}
\begin{split}
(x':y':z':u':v',\theta')&=^{h,g}\nosp\delta'(s',t',\theta')\\
&=(0:(a^{2h+1}\theta'^{g+1}-1)t'^{\frac{g+1}{2}}:s':t'^{g+1}:1,\theta')\\
(x':y':z':u':v',\theta')&=^{h,g}\nosp\delta'(S',T',\theta')\\
&=(0:(a^{2h+1}\theta'^{g+1}-1)T'^{\frac{g+1}{2}}:S':1:T'^{g+1},\theta')\\
(x'':y'':z'':u'':v'',\theta'')&=^{h,g}\nosp\delta''(s'',t'',\theta'')\\
&=(0:(a^{2h+1}-\theta''^{g+1})t''^{\frac{g+1}{2}}:s'':t''^{g+1}:1,\theta'')\\
(x'':y'':z'':u'':v'',\theta'')&=^{h,g}\nosp\delta''(S'',T'',\theta'')\\
&=(0:(a^{2h+1}-\theta''^{g+1})T''^{\frac{g+1}{2}}:S'':1:T''^{g+1},\theta'')
\end{split}
\end{equation*}
define morphisms $^{h,g}\nosp\delta':~^{h,g}\nosp\textbf{X}'\To\P^4\times\mathbb{A}^1\subset\textbf{P}^4$ and $^{h,g}\nosp\delta'':~^{h,g}\nosp\textbf{X}''\To\P^4\times\mathbb{A}^1\subset\textbf{P}^4$. Then we can check that these two morphisms are compatible with the identifications from the target
\begin{align*}
x''&=x'/\theta'^{2g+2},&y''&=y'/\theta'^{2g+2},\\
z''&=z'/\theta'^{2g+2},&u''&=u'/\theta'^{2g+2},\\
v''&=v',&\theta''&=1/\theta',
\end{align*}
and the identifications from the source
\begin{align*}
 s''&=s'/\theta'^{2g+2},  &t''&=t'/\theta'^2, \\
 S''&=S', &T''&=T'\theta'^2,\\
\theta''&=1/\theta'.&&
\end{align*}
So we can glue them together to get the desired morphism $^{h,g}\nosp\delta:{^{h,g}\nosp\textbf{X}}\To\textbf{P}^4$. It is clear that its image lies inside $^{h,g}\nosp\textbf{Y}=\textbf{H}\cap{^{h,g}\nosp\textbf{S}}$. By definition, we find  that $^{h,g}\nosp\delta:{^{h,g}\nosp\textbf{X}}\To{^{h,g}\nosp\textbf{Y}}\subset{^{h,g}\nosp\textbf{S}}$ is a $\P^1$-morphism. For almost all $\theta\in\P^1$,
%(such that  $a^{2h+1}\theta'^{g+1}-1\neq0$),
the morphism $^{h,g}\nosp\delta_\theta:{^{h,g}\nosp\textbf{X}_\theta}\To{^{h,g}\nosp\textbf{Y}_\theta}$ between curves is finite dominant  of degree $\frac{g+1}{2}$.

\subsection{Convention}\label{subsectionconvention}\

Since the main part of our forthcoming discussion in \S \ref{sectiongeometry} and \S \ref{sectionarithmetic} will be done fiber by fiber,  we simplify the notation as follows.

Let $F$ be a field of characteristic different from $2$. In this paper, it can be $K$, local fields $L$ containing $K$, finite fields of odd characteristic as base fields of reductions of varieties, or the function field of one variable over $K$. With constants $a,b\in F^*$ and $A,B,C\in F$, we consider the surface $\mathcal{S}\subset\P^4$ defined over $F$ by
\begin{equation}\label{Seqgeneral}
\left\{
\begin{array}{l@{}l@{}l}
x^{2}-az^{2} &=& -b(u-A v)(u-B v)\\
x^{2}-ay^{2}&= &-aC^{2}uv
\end{array} \right.  ,
\end{equation}
the curve $\mathcal{Y}$ as the intersection of $\mathcal{S}$ and the hyperplane defined by $x=0$,
and the curve $\mathcal{X}$ defined over $F$ by
\begin{equation}\label{eqcurvegeneral}
\begin{split}
s^2=&f(t)~=\frac{b}{a}(t^{g+1}-A)(t^{g+1}-B),\\
S^2=&F(T)=\frac{b}{a}(1-AT^{g+1})(1-BT^{g+1}),
\end{split}
\end{equation}
via standard identifications
\begin{align*}
T&=1/t,   &S&=s/t^{g+1},\\
t&=1/T,  &s&=S/T^{g+1},
\end{align*}
whenever $t$ and $T$ are both nonzero. Note that the polynomials $f$ and $F$ determine each other by $f(t)=t^{2g+2}F(1/t)$ and $F(T)=T^{2g+2}f(1/T)$. Therefore, to describe the curve, we often only write one of the two equations omitting the identifications but we actually mean the projective model given above.

With the identification $\theta'=1/\theta''$, we will replace both $\theta'$ and $\theta''$ by $\theta\in\P^1$ signifying that the point $\theta$ has coordinate $\theta'$ once $\theta\neq\infty$ and has coordinate $\theta''$ once $\theta\neq0$. We remove all the superscripts $'$ and $''$ over $x,y,z,u,v,s,t,S,T$. In the defining equations of our specific surfaces and curves, the constants $A,B,C$ depend on $\theta$ and $(h,g)$. We also write $B-A=2cD^2$ for the convenience of the presentation.
If $\theta\neq\infty$ then we denote by
\begin{equation*}
\begin{split}
A_\theta&=a^{4h+3}\theta^{2g+2}+bc^{2}dD_\theta^{2},\\
B_\theta&=a^{4h+3}\theta^{2g+2}+(bc^{2}d+2c)D_\theta^{2},\\
C_\theta&=a^{2h+1}\theta^{g+1}-1,\\
\qquad\mbox{where }D_\theta&=a^{2h+1}b^{2h+1}\theta^{g+1}-1;\\
\end{split}
\end{equation*}
and if $\theta=\infty$ then we denote by
\begin{equation*}
\begin{split}
A_\infty&=a^{4h+3}+bc^{2}dD_\infty^{2},\\
B_\infty&=a^{4h+3}+(bc^{2}d+2c)D_\infty^{2},\\
C_\infty&=a^{2h+1},\\
\mbox{where }D_\infty&=a^{2h+1}b^{2h+1}.\\
\end{split}
\end{equation*}
In summary, for every point $\theta\in\P^1$, the surface $^{h,g}\nosp\textbf{S}_\theta$ is given by
\begin{equation}\label{Seq}
\left\{
\begin{array}{l@{}l@{}l}
x^{2}-az^{2} &=& -b(u-A_\theta v)(u-B_\theta v)\\
x^{2}-ay^{2}&= &-aC_\theta^{2}uv
\end{array} \right.  ;
\end{equation}
the curve $^{h,g}\nosp\textbf{Y}_\theta$ is given by
\begin{equation}\label{Yeq}
\left\{
\begin{array}{l@{}l@{}l}
az^{2} &=& b(u-A_\theta v)(u-B_\theta v)\\
y^{2}&= &C_\theta^{2}uv
\end{array} \right.  ;
\end{equation}
the curve $^{h,g}\nosp\textbf{X}_\theta$ is given by
\begin{equation}\label{eqcurve}
\begin{split}
\mbox{either }s^2=&f_\theta(t)~=\frac{b}{a}(t^{g+1}-A_\theta)(t^{g+1}-B_\theta),\\
\mbox{or }S^2=&F_\theta(T)=\frac{b}{a}(1-A_\theta T^{g+1})(1-B_\theta T^{g+1}).
\end{split}
\end{equation}

To further simplify the notation, we will omit the left superscript $(h,g)$ in most of the proofs, but we will preserve it in the statement of results in order to remember that the algebraic families depend on positive integers $h$ and $g$.

\section{Geometry of algebraic families}\label{sectiongeometry}

We study the smoothness of algebraic families defined in the previous section.
\begin{lemma}\label{ABCDnonzero}
\begin{enumerate}
\item For $\theta\in\mathbb{P}^1(K)$, the elements $A_\theta$, $B_\theta$, $C_\theta$, and $D_\theta$ are all nonzero provided that $g$ is odd.
\item For $\theta\in\mathbb{P}^1(\bar{K})$, if $A_\theta=B_\theta$ then $D_\theta=0$ and $A_\theta=B_\theta\neq0$.
\item For $\theta\in\mathbb{P}^1(\bar{K})$, if $C_\theta=0$ then $A_\theta B_\theta D_\theta\neq0$.
%\item For $\theta\neq\infty$, the product $A_\theta B_\theta$ viewed as a polynomial in $K[\theta]$ has no multiple root in $\bar{K}$.
%\item For $\theta\in\mathbb{P}^1(\bar{K})$, the elements $C_\theta$ and $D_\theta$ cannot simultaneously be zero.
\end{enumerate}
\end{lemma}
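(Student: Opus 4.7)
The plan is to exploit the identity $a^{4h+3}=a\cdot(a^{2h+1})^2$, which rewrites both $A_\theta$ and $B_\theta$ (in the chart where $\theta$ is finite) in the shape
$$a\cdot(a^{2h+1}\theta^{g+1})^2 + \lambda\cdot D_\theta^2, \qquad \lambda\in\{bc^2d,\ c(bcd+2)\}.$$
The main arithmetic input is then that $a$ generates a prime ideal, so $v_a(a)=1$ is odd; any equation in $K$ (or even $K_a$) that would force $a$ to become a square modulo units at $\mathfrak{p}_a$ is impossible.

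For part (1) I would first treat $C_\theta$ and $D_\theta$. The equation $C_\theta=0$ reads $\theta^{g+1}=a^{-(2h+1)}$; since $g$ is odd, $g+1$ is even, so this would require $a^{2h+1}$ to be an even power in $K$, contradicting $v_a(a^{2h+1})=2h+1$. The same argument rules out $D_\theta=0$ using $v_a((ab)^{2h+1})=2h+1$. For $A_\theta$ and $B_\theta$, I dispose of the exceptional values $\theta\in\{0,\infty\}$ by direct substitution (at $\theta=0$ the values are $bc^2d$ and $c(bcd+2)$, both nonzero thanks to condition \ref{bcd+2}; at $\theta=\infty$ factoring $a^{4h+2}$ out of $A_\infty$ and $B_\infty$ leaves $a$ plus a term that is a unit at $\mathfrak{p}_a$, so the sum has $v_a=0$). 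For generic $\theta$, I assume $A_\theta=0$ and divide by $D_\theta^2$ to obtain $-abd\in K^{*2}$; passing to $K_a$ and using $-1,b,d\in K_a^{*2}$ yields $a\in K_a^{*2}$, contradicting $v_a(a)=1$. For $B_\theta=0$ the same manipulation gives $-ac(bcd+2)\in K^{*2}$, whose $a$-adic valuation is $1$ by condition \ref{bcd+2}, again a contradiction.

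For part (2) I would simply subtract: $B_\theta-A_\theta=2c\,D_\theta^2$ (an identity which is valid after the coordinate change in the chart around $\theta=\infty$ as well). Hence $A_\theta=B_\theta$ in $\bar K$ forces $D_\theta=0$. This already rules out $\theta=0$ (where $D_0=-1$) and $\theta=\infty$ (where $D_\infty=(ab)^{2h+1}\ne 0$). For the remaining $\theta$, $D_\theta=0$ gives $\theta^{g+1}=(ab)^{-(2h+1)}\ne0$, so $\theta\ne0$, and therefore $A_\theta=a\cdot(a^{2h+1}\theta^{g+1})^2\ne 0$.

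The main obstacle is really bookkeeping: each of $A,B,C,D$ must be checked in each of the special cases $\theta=0$, $\theta=\infty$, and $\theta$ generic, and the formulas split across two affine charts. The factorisation $a^{4h+3}=a(a^{2h+1})^2$ is what makes every subcase collapse to the single principle that the uniformiser $a$ cannot become a square in $K_a$; once that observation is in hand the verification is routine.
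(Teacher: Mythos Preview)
Your argument is correct and follows essentially the same route as the paper: reduce $A_\theta=0$ and $B_\theta=0$ to a square condition on $-abd$ and $-ac(bcd+2)$ respectively, then obtain a parity contradiction on a suitable valuation, and handle $C_\theta,D_\theta$ via the fact that $g+1$ is even. The only cosmetic differences are that you separate the cases $\theta=0,\infty$ explicitly (the paper folds these into the same argument), and for $B_\theta$ you use the $a$-adic valuation together with condition~\ref{bcd+2}, whereas the paper uses the $c$-adic valuation (which avoids invoking~\ref{bcd+2}); both are valid.
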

\begin{proof}
As (2) and (3) are clear from by definition, it remains to prove (1). %and (4).
%(1).
When $\theta\neq\infty$, we find that $A_\theta \neq0$ since otherwise $-abd\in K^{*2}$ which is impossible by looking at $a$-adic valuations. Similarly $B_\theta \neq0$ since otherwise $-ac(bcd+2)\in K^{*2}$ which is impossible according to $c$-adic valuations.
As $\theta\in K$ and $g$ is odd, a comparison of $a$-adic valuation implies that $C_\theta =a^{2h+1}\theta^{g+1}-1\neq0$ and $D_\theta =a^{2h+1}b^{2h+1}\theta^{g+1}-1\neq0$. When $\theta=\infty$, the argument is similar and omitted.
%(4). It follows from (2) that the polynomials $A_\theta$ and $B_\theta$ do not have a $\bar{K}$-root in common. If we write $\Theta=\theta^{g+1}$, then
%\begin{equation*}
%\begin{split}
%A_\theta&=a^{4h+3}\Theta^2+bc^2d(a^{2h+1}b^{2h+1}\Theta^2-1)^2\\ B_\theta&=a^{4h+3}\Theta^2+(bc^2d+2c)(a^{2h+1}b^{2h+1}\Theta^2-1)^2
%\end{split}
%\end{equation*}
%as  polynomials in $\Theta$ have distinct nonzero $\bar{K}$-roots. Hence $A_\theta$ and $B_\theta$ as polynomials in $\theta$ have no multiple roots.
\end{proof}

\begin{lemma}\label{lemmaJacobian}
Consider the $F$-varieties $\mathcal{X}$, $\mathcal{Y}$, and $\mathcal{S}$ defined in \S \ref{subsectionconvention} over a field $F$ of characteristic different from $2$. Then
\begin{enumerate}
\item $\mathcal{X}$ is smooth if $a,b,A,B,A-B$ are all nonzero,
\item $\mathcal{Y}$ is smooth if $a,b,A,B,C,A-B$ are all nonzero,
\item $\mathcal{S}$ is smooth if $a,b,A,B,C,A-B$ are all nonzero and $$b^2(B-A)^2+2abC^2(B-A)+4abC^2A+a^2C^4\neq0.$$
\end{enumerate}
\end{lemma}

\begin{proof}
The first statement follows easily from Jacobian criterion. We will prove (2) and (3) at the same time. Recall that $\mathcal{S}$ is defined by
\begin{equation*}
\left\{
\begin{array}{l@{}l@{}l}
x^{2}-az^{2} &=& -b(u-A v)(u-B v)\\
x^{2}-ay^{2}&= &-aC^{2}uv
\end{array} \right.  ,
\end{equation*}
whose  Jacobian matrix $J$  equals to
$$\begin{pmatrix}
2x &0&-2az& b(2u-A v-B v)&-b[B (u-A v)+A (u-B v)]\\
  2x & -2ay & 0 & aC^{2}v & aC^{2}u
\end{pmatrix}.$$ And $\mathcal{Y}$ is obtain by taking $x=0$.

The assumption $C\neq0$ implies that the second row of $J$ is nonzero since the homogeneous coordinates $x,y,z,u,v$ can not be simultaneously zero. We also claim that $A-B\neq0$ implies that the first row of $J$ is not zero either. Indeed, if this were not the case, we would have $x=z=0=(u-A v)+(u-B v)$. But then the first  equation  would force that $(u-A v)(u-B v)=0$ and thus $u-A v=u-B v=0$. The assumption $A-B\neq0$ would imply that $v=0$ and finally $u=0,~y=0$ ending up with a contradiction.

It remains to show that the two rows of $J$ are linearly independent. Suppose that the rank of $J$ is $1$; then $y=z=0$.
If $x=0$, then the second  equation tells us that one of  $u$ and $v$ must be $0$. Furthermore, both of them must be $0$ according to the first equation with $A\neq0$ and $B\neq0$, which is impossible for homogeneous coordinates. This completes the proof of (2). We continue to prove (3). If $x\neq0$, then $u\neq0$, $v\neq0$. We deduce that two rows of $J$ are equal. From the equality of the two entries of the fourth column of $J$, we find that
$$u=\frac{b(A+B)+aC^2}{2b}v.$$
Substituting such an expression into the equality of the two entries of the fifth column of $J$, we obtain
$$b^2(B-A)^2+2abC^2(B-A)+4abC^2A+a^2C^4=0$$
which contradicts our assumption.
\end{proof}

\begin{proposition}\label{XYSsmooth}
Assume that $g$ is odd. For any $\theta\in\mathbb{P}^1(K)$ the fibers $^{h,g}\nosp\textbf{X}_\theta$, $^{h,g}\nosp\textbf{Y}_\theta$, and $^{h,g}\nosp\textbf{S}_\theta$ are smooth.
\end{proposition}
\begin{proof}
Note that $B_\theta-A_\theta=2cD_\theta^2$, the smoothness of $\textbf{X}_\theta$ and $\textbf{Y}_\theta$ follows from Lemma \ref{ABCDnonzero} and Lemma \ref{lemmaJacobian}. For $\textbf{S}_\theta$, we still need to check that
$$b^2(B_\theta -A_\theta )^2+2abC_\theta^2(B_\theta +A_\theta )+a^2C_\theta ^4=0$$
never happens for all $\theta\in\P^1(K)$.
We rewrite it as
\begin{equation}\label{smeq}
\begin{split}
-a^2C^4_\theta&=b^2(B_\theta-A_\theta)^2+2abC^2_\theta(B_\theta-A_\theta)+4abC^2_\theta A_\theta\\
&=4b^2c^2D^4_\theta+4abcC^2_\theta D^2_\theta+4abA_\theta C^2_\theta\end{split}
\end{equation}

\begin{itemize}
\item When $\theta=0$, it reads simply
    \begin{equation}\label{smeq0}
    \frac{-a^{2}}{4b}=abc^{2}d+ac+bc^{2}
    \end{equation}
    which is impossible by comparing $a$-adic valuations of both sides.
\item When $\theta=\infty$, the equation (\ref{smeq}) becomes
    \begin{equation}\label{smeqinfty}
    -\frac{a^{8h+6}}{4b}=a^{8h+4}b^{8h+5}c^{2}+a^{8h+5}b^{4h+2}c+a^{4h+3}( a^{4h+3}+a^{4h+2}b^{4h+3}c^{2}d).
    \end{equation}
    which is impossible by comparing $a$-adic valuations of both sides.
\item When $\theta\neq0$, the valuation $\v_a(\theta)$ is an integer and $\v_a(a^{2h+1}\theta^{g+1})$ is never $0$ since $g$ is odd. Therefore $k=\v_a(C_\theta)=\v_a(D_\theta)\leq0$. We rewrite the equality as
    $$-\frac{a^{2}C_\theta^{4}}{4b}=bc^{2}D_\theta^4+acC_\theta^{2}D_\theta^{2}+a(a\cdot a^{4h+2}\theta^{2g+2}+bc^{2}dD_\theta^{2})C_\theta^{2}.$$
    Regardless of whether $\v_a(a^{2h+1}\theta^{g+1})$ is positive or negative, we always find that the left hand side has $a$-adic valuation $4k+2$ compared to $4k$ for the right hand side, which ends up with a contradiction.
\end{itemize}
\end{proof}

\begin{remark}\label{XYSgeneric}
Seen by specialising to $\theta=0$, the equality (\ref{smeq}) viewed in the function field $K(\theta)$ is also impossible. This signifies that the generic fiber of $^{h,g}\nosp\tau$ is also a smooth complete intersection of two quadrics in $\mathbb{P}^4$. The $3$-fold $^{h,g}\nosp\textbf{S}$ is a bundle of del Pezzo surfaces of degree $4$ parameterised by $\mathbb{P}^1$. The surface $^{h,g}\nosp\textbf{Y}$ is a bundle of genus $1$ curves parameterised by $\mathbb{P}^1$. The surface $^{h,g}\nosp\textbf{X}$ is a bundle of genus $g$ hyperelliptic curves parameterised by $\mathbb{P}^1$.
\end{remark}

\section{Arithmetic of algebraic families}\label{sectionarithmetic}

\subsection{Arithmetic of algebraic families}\

Every $K$-member of our algebraic families $^{h,g}\nosp\textbf{X}\To\P^1$, $^{h,g}\nosp\textbf{Y}\To\P^1$, and $^{h,g}\nosp\textbf{S}\To\P^1$, violates the Hasse principle. To be more precise, we state our main result as follows.
\begin{thm}\label{XYSarithmetic}
Let  $h\geq0$ be an integer and $g\geq0$ be an odd integer. Let $\Omega^0$ be the finite set of places given by
\begin{equation}\label{omega0}
\Omega^0=\{\pi\in\Omega\setminus\Omega^\infty;\quad\pi\nmid2\mbox{ and }\textup{char}(\mathbb{F}_\pi)\leq4g^2\}.
\end{equation}
Let arithmetic parameters $a,b,c,d$ be chosen as in Proposition \ref{parameter}.
Consider the algebraic families $^{h,g}\nosp\textbf{X}\To\P^1$ of genus $g$ hyperelliptic curves, $^{h,g}\nosp\textbf{Y}\To\P^1$ of genus $1$ curves, and $^{h,g}\nosp\textbf{S}\To\P^1$ of degree $4$ del Pezzo surfaces constructed in \S \ref{subsectionconstruction}.

\begin{itemize}
\item[(1)] Assume that $g+1\mid4h+2$, then the maps $^{h,g}\nosp\textbf{X}(L)\To\P^1(L)$, $^{h,g}\nosp\textbf{Y}(L)\To\P^1(L)$, and $^{h,g}\nosp\textbf{S}(L)\To\P^1(L)$ are surjective for all local fields $L$ containing $K$.

    In particular, for every closed point $\theta\in\P^1$ the fibers $^{h,g}\nosp\textbf{X}_\theta$, $^{h,g}\nosp\textbf{Y}_\theta$, and $^{h,g}\nosp\textbf{S}_\theta$  process rational points locally everywhere.
\item[(2)]  For any $\theta\in\mathbb{P}^1(K)$, the varieties  $^{h,g}\nosp\textbf{X}_\theta$,  $^{h,g}\nosp\textbf{Y}_\theta$, and $^{h,g}\nosp\textbf{S}_\theta$ do not possess  any global zero-cycles of degree $1$.
\end{itemize}
In particular, for  any $\theta\in\mathbb{P}^1(K)$, the varieties $^{h,g}\nosp\textbf{X}_\theta$,  $^{h,g}\nosp\textbf{Y}_\theta$, and $^{h,g}\nosp\textbf{S}_\theta$ violate the Hasse principle.
\end{thm}

\noindent\textit{Summary of proof of Theorem \ref{XYSarithmetic}.}
The proof is not difficult but rather lengthy. For (1), we mainly apply Hensel's lemma to lift a smooth rational point over a finite field of the reduction of a certain equation. For (2), we take an element of the Brauer group  and verify that it gives an obstruction to the existence of a global rational point. Since we have a $\P^1$-morphism $^{h,g}\nosp\delta:{^{h,g}\nosp\textbf{X}}\To{^{h,g}\nosp\textbf{Y}}\subset{^{h,g}\nosp\textbf{S}}$, we only need to prove (1) for $^{h,g}\nosp\textbf{X}$ and (2) for $^{h,g}\nosp\textbf{S}_\theta$. Details will be given in \S \ref{subsectionlocal} for local solvability and \S \ref{subsectionglobal} for global nonexistence of degree $1$ zero-cycles.

\begin{remark}
Though we will not present the details, with some more effort we are able to say something on smooth fibers over closed points $\theta\in\P^1$.
\begin{itemize}
\item If $|\Omega_\theta^{a,c}|$ is odd, then $^{h,g}\nosp\textbf{X}_\theta$,  $^{h,g}\nosp\textbf{Y}_\theta$, and $^{h,g}\nosp\textbf{S}_\theta$ do not possess  any global zero-cycles of degree $1$.
\end{itemize}
Here if $K(\theta)$ denotes the residue field of $\theta$ then
$$\Omega^{a,c}_\theta=\{\omega\in\Omega_{K(\theta)};\quad2\nmid\v_\omega(a)\mbox{ and }c\notin K(\theta)_\omega^{*2}\}$$
is defined to be a finite set of certain places lying over $a$. For instance, if $a$ splits completely in $K(\theta)$ then $|\Omega_\theta^a|$ is odd. The nonexistence of global rational points on the fibers can be deduced by the decomposition of $a$ in the extension $K(\theta)$ of $K$.
\end{remark}

\begin{remark}
The theorem can be stated a bit more generally in the following sense.
\begin{itemize}
\item The surjectivity of $^{h,g}\nosp\textbf{Y}(L)\To\P^1(L)$ and $^{h,g}\nosp\textbf{S}(L)\To\P^1(L)$ holds without the assumption $g+1\mid4h+2$.
\item The two statements still hold for $^{h,g}\nosp\textbf{Y}$ and $^{h,g}\nosp\textbf{S}$ with an arbitrary finite set $\Omega^0$ of non-archimedean places instead of  this specific one, which depends on $g$.
\end{itemize}
Indeed, the proof for (2) in \S \ref{subsectionglobal} works for an arbitrary $\Omega^0$. And one can also prove (1) without assuming $g+1\mid4h+2$ for $^{h,g}\nosp\textbf{Y}$ with an arbitrary $\Omega^0$, but we decide not to give the lengthy details. Instead, we give a rough explanation. The $\P^1$-curve $^{h,g}\nosp\textbf{X}$ is of genus $g$ while $^{h,g}\nosp\textbf{Y}$ is of genus $1$. We will see in \S \ref{subsectionlocal} that $g+1\mid4h+2$ is needed for the proof of (1) for $^{h,g}\nosp\textbf{X}$ only when $L\supset K_a$ thus we have bad reductions to finite fields, while the analogous condition for $^{h,g}\nosp\textbf{Y}$ is $2\mid 4h+2$ which holds automatically.
\end{remark}

\begin{remark}
The condition that $g+1\mid4h+2$ for odd integer $g$ implies that $g\equiv1~\mathrm{mod}~4$. Conversely, if $g\equiv1~\mathrm{mod}~4$ we can take $h=\frac{g+1}{2}l+\frac{g-1}{4}$ for any integer $l\geq0$.

When $g=1$, this condition is superfluous. We have the following immediate consequence.
\end{remark}

\begin{corollary}\label{cor-sha}
For each integer $h\geq0$, there exists an explicit algebraic family of elliptic curves $^h\nosp\textbf{E}\To\P^1$, such that
\begin{itemize}
\item for all rational points $\theta\in\P^1(K)$, the fiber $^h\nosp\textbf{E}_\theta$ is an elliptic curve over $K$ such that $\sha(K,{^h\nosp\textbf{E}_\theta})[2]$ contains a nonzero element given by the class of the algebraic family of torsors $[^{h,1}\nosp\textbf{X}_\theta]$,
\item  the $j$-invariant $j(^h\nosp\textbf{E}_\theta)$ is a nonconstant function on $\theta\in\P^1(K)$.
\end{itemize}
\end{corollary}

\begin{proof}
Let $U\subset\P^1$ be the Zariski open subset defined by $A_\theta B_\theta(A_\theta-B_\theta)\neq0$; then $U$ contains $\P^1(K)$ by Lemma \ref{ABCDnonzero}(1) and all fibers over $U$ of the morphism $^{h,1}\nosp\sigma:{^{h,1}\nosp\textbf{X}}\To\P^1$ are smooth projective curves of genus $1$ by Lemma \ref{lemmaJacobian}. Define $^h\nosp\textbf{E}\To\P^1$ to be a smooth compactification of the composition of the family of Jacobian varieties $\textbf{Pic}^0_{^{h,1}\nosp\textbf{X}_U/U}\To U$ and the open immersion $U\To\P^1$. Then for each rational point $\theta\in\P^1(K)$, the genus $1$ curve $^{h,1}\nosp\textbf{X}_\theta$ is a torsor under $^h\nosp\textbf{E}_\theta$ violating the Hasse principle; in other words the class $[^{h,1}\nosp\textbf{X}_\theta]\in\sha(K,{^h\nosp\textbf{E}_\theta})$ is nonzero. It is clear that $^{h,1}\nosp\textbf{X}_\theta$ has a rational point with coordinate $t=0$ over a quadratic extension of $K$. The restriction-corestriction argument implies that the class $[^{h,1}\nosp\textbf{X}_\theta]$ is annihilated by $2$.

As $^h\nosp\textbf{E}_\theta$ is $\bar{K}$-isomorphic to $^{h,1}\nosp\textbf{X}_\theta$, we use the defining equation of the latter to compute the $j$-invariant. It follows from a simple calculation that $$j(^h\nosp\textbf{E}_\theta)=\frac{16[(A_{\theta}-B_{\theta})^{2}+16A_{\theta}B_{\theta}]^{3}}{A_{\theta}B_{\theta}(A_{\theta}-B_{\theta})^{4}},$$
which is not a constant function. We refer to \cite[Chapter IV \S4]{GTM52} for definition and details.
\end{proof}

\subsection{Local solvability}\label{subsectionlocal}\

The aim of this subsection is to prove Theorem \ref{XYSarithmetic}(1).  We first establish several preparatory results.

\begin{lemma}\label{curvelemfinitefield}
Let $g\geq0$ be an integer and $\mathbb{F}$ be a finite field of  characteristic $p>4g^2$.
Let $\mathfrak{X}\subset \mathbb{A}^2$ be an affine curve defined over $\mathbb{F}$ by the equation in coordinates $(S,T)$
$$\mathfrak{a}S^2=\mathfrak{b}(1-\mathfrak{r}T^{g+1})$$
with $\mathfrak{a},\mathfrak{b},\mathfrak{r}\in\mathbb{F}^*$. Then $\mathfrak{X}$ possesses at least one smooth $\mathbb{F}$-point.
\end{lemma}

\begin{proof}
We may suppose that $g\geq1$, otherwise the statement is trivially true.
We observe that $p$ is odd and $p\nmid g+1$ by assumption.
The Jacobian matrix of the curve $\mathfrak{X}$ is
$$J=
\begin{pmatrix}
2\mathfrak{a}S& (g+1)\mathfrak{b}\mathfrak{r}T^g
\end{pmatrix}.$$
Since $p\nmid g+1$, we need to find a solution of the equation with either $S\neq0$ or $T\neq0$.
Therefore, we are done when $1-\mathfrak{r}T^{g+1}=0$ has a solution, which is automatically nonzero. From now on, we assume in addition that $1-\mathfrak{r}\mathfrak{e}^{g+1}\neq0$ for all $\mathfrak{e}\in\F$.

If $\mathfrak{a}\mathfrak{b}\in\F^{*2}$, it suffices to take $(S,T)=(\sqrt{\frac{\mathfrak{b}}{\mathfrak{a}}},0)$.

If $\mathfrak{a}\mathfrak{b}\notin\F^{*2}$, we claim that there exists an $\mathfrak{e}\in\F$ such that $1-\mathfrak{r}\mathfrak{e}^{g+1}$ is not a square in $\F$. Then we find immediately that $\frac{\mathfrak{b}}{\mathfrak{a}}(1-\mathfrak{r}\mathfrak{e}^{g+1})$ is a nonzero square in $\F$, and thus $(S,T)=(\sqrt{\frac{\mathfrak{b}}{\mathfrak{a}}(1-\mathfrak{r}\mathfrak{e}^{g+1})},\mathfrak{e})$ is a smooth $\F$-point. To prove the claim, we consider an auxiliary affine curve $\mathfrak{X}'^o$  defined by
$$S^2=1-\mathfrak{r}T^{g+1}.$$
It can be compactified to a smooth (as $p\nmid g+1$) projective hyperelliptic curve $\mathfrak{X}'$, i.e. the quadratic twist $\mathfrak{X}^{\mathfrak{a}\mathfrak{b}}$ of $\mathfrak{X}$. The curve $\mathfrak{X}'$ has genus $\lceil\frac{g-1}{2}\rceil$, where $\lceil x\rceil$ is the smallest integer no less than a real number $x$. The morphism $\lambda:\mathfrak{X}'\To\P^1$ given by the projection to the coordinate $T$ is a double cover. The additional assumption that $1-\mathfrak{r}\mathfrak{e}^{g+1}$ is never $0$ for $\mathfrak{e}\in\F$ implies that $|\lambda(\mathfrak{X}'(\F))|=|\mathfrak{X}'(\F)|/2$. Then the Hasse\textendash Weil bound \cite[Corollaire 3]{HasseWeilbound} together with $|\F|\geq p>4g^2$ implies that
$$|\lambda(\mathfrak{X}'(\F))|\leq (1+|\F|+2\lceil\frac{g-1}{2}\rceil\sqrt{|\F|})/2\leq|\F|-1=|\P^1(\F)|-2.$$
In other words $\lambda:\mathfrak{X}'^o(\F)\To\mathbb{A}^1(\F)$ cannot be surjective, which proves the claim.
\end{proof}

Recall that $L$ is a local field containing $K_\pi$ for a certain place $\pi$ of $K$. If $L$ is non-archimedean, we have fixed a uniformizer $\omega\in\mathcal{O}_L$ generating its maximal ideal.
Recall from (\ref{eqcurvegeneral})  that $\mathcal{X}$ is a curve defined over $L$  by
\begin{equation*}%\label{Xeqgeneral}
\begin{split}
\mbox{either }s^2=&f(t)~=\frac{b}{a}(t^{g+1}-A)(t^{g+1}-B),\\
\mbox{or }S^2=&F(T)=\frac{b}{a}(1-AT^{g+1})(1-BT^{g+1}),
\end{split}
\end{equation*}
depending on which affine open chart is concerned.
We suppose that $\mathcal{X}$ is a smooth hyperelliptic curve, which is the case for $^{h,g}\textbf{X}_\theta$ if $\theta\in\P^1(K)$ by Proposition \ref{XYSsmooth}.

\begin{defn}\label{omega-integral}
When the constants $a,b,A,B\in L$ are nonzero $\omega$-adic integers with $A\neq B$, we say that the  defining equations of $\mathcal{X}$ are \emph{$\omega$-integral}.
\end{defn}

In applications, the condition of $A,B,A-B$ being nonzero is often easily deduced from Lemma \ref{ABCDnonzero} (possibly combined with a change of coordinates). The only serious condition in the definition is that the constants are $\omega$-adic integers.

\begin{proposition}\label{curve-pi}
Suppose that $\pi$ is a non-archimedean place with residue characteristic $p>4g^2$ and $\mathcal{X}$ is defined by  $\omega$-integral equations.

If $\omega\nmid2ab(A-B)$, then $\mathcal{X}(L)\neq\varnothing$.
\end{proposition}

\begin{proof}
We divide the proof into two cases
\begin{enumerate}
\item\label{curve-pi=good} $\omega\nmid2abAB(A-B)$;
\item\label{curve-piAB} $\omega\nmid2ab(A-B)$ but $\omega\mid AB$.
\end{enumerate}

(\ref{curve-pi=good}) From $\omega\nmid2$ and $p>4g^2$, we know that $\omega\nmid g+1$. As $\omega\nmid(g+1)AB(A-B)$, the polynomial
$$f(t)=\frac{b}{a}(t^{g+1}-A)(t^{g+1}-B)\in\mathcal{O}_L[t]$$
is still separable $\mathrm{mod}~\omega$. Its reduction $\mathfrak{f}(t)$ is such that $s^2=\mathfrak{f}(t)$ defines a smooth hyperelliptic curve of genus $g$ over $\F_\omega$. According to the Hasse\textendash Weil bound \cite[Corollaire 3]{HasseWeilbound}, the number of $\F_\omega$-points of this reduction is at least $1+|\F_\omega|-2g\sqrt{|\F_\omega|}>0$ since $|\F_\omega|\geq p>4g^2$. These $\F_\omega$-points can be lifted to $L$-points by Hensel's lemma.

(\ref{curve-piAB}) An affine open subset of $\mathcal{X}$ is defined by
$$aS^2=b(1-AT^{g+1})(1-BT^{g+1}).$$
Its reduction $\mathrm{mod}~\omega$ is given by $$\mathfrak{a}S^2=\mathfrak{b}[1\pm(\mathfrak{A}-\mathfrak{B})T^{g+1}]$$
where either $+$ or $-$ appears depending on whether $\omega\mid A$ or $\omega\mid B$ respectively.
We conclude by applying Lemma \ref{curvelemfinitefield} and Hensel's lemma.
\end{proof}

\begin{proposition}\label{curve-pig+1power}
Suppose that $\pi$ is a non-archimedean place such that $\omega\nmid g+1$ and $\mathcal{X}$ is defined by  $\omega$-integral equations.
Assume moreover that $g+1\mid\v_\omega(A)$ and $\omega^{-\v_\omega(A)}A ~~\mathrm{mod}~\omega$ is a $(g+1)$-th  power.
Then $\mathcal{X}(L)\neq\varnothing$.
\end{proposition}
\begin{remark}
The assumption  that  $\omega^{-\v_\omega(A)}A~~\mathrm{mod}~\omega$ is  a $(g+1)$-th power does not depend on the choice of the uniformizer $\omega$ since $\v_\omega(A)$ is assumed to be divisible by $g+1$.
\end{remark}

\begin{proof}
Hensel's lemma implies that $\omega^{-\v_\omega(A)}A$ is a nonzero $(g+1)$-th power in $L$, then so does $A$ since $g+1\mid\v_\omega(A)$. Hence $\mathcal{X}$ has a $L$-point with coordinates $(s,t)=(0,\sqrt[g+1]{A})$.
\end{proof}

\begin{proposition}\label{curve-pi=2infty}
Assume that $ab\in L^{*2}$ (even without $\omega$-integrality). Then $\mathcal{X}(L)\neq\varnothing$.
\end{proposition}

\begin{proof}
The point with coordinates $(S,T)=(\sqrt{\frac{b}{a}},0)$ is a $L$-point.
\end{proof}

Now we are ready to prove Theorem \ref{XYSarithmetic}(1).
\begin{proof}[Proof of Theorem \ref{XYSarithmetic}(1)]
We first deal with some particular cases.
%By Lemma \ref{ABCDnonzero}, we know that $A_\infty B_\infty D_\infty\neq0$.
Consider $\theta\in L=\P^1(L)\setminus\{\infty\}$ a root of the product $A_\theta B_\theta D_\theta$ of  polynomials, then the fiber $\textbf{X}_\theta$ contains trivial rational points as follows.
\begin{itemize}
\item When $\theta$ is a root of $D_\theta=a^{2h+1}b^{2h+1}\theta^{g+1}-1$, then $ab\in L^{*2}$ as $g$ is odd and hence the fiber $\textbf{X}_{\theta}$ has a $L$-point with $(S,T)=(\sqrt{\frac{b}{a}},0)$.
\item When $\theta$ is a root of $A_\theta B_\theta$, then the fiber $\textbf{X}_\theta$ has a $L$-point with $(s,t)=(0,0)$.
\end{itemize}

The rest of the proof is divided into three cases according to the value of $\theta\in\P^1(L)$ and we suppose that $A_\theta B_\theta D_\theta\neq0$ when $\theta\neq\infty$.
We observe  from the definition of $\Omega^0$ (equation \eqref{omega0}) that if $\pi\notin\Omega^0\cup\Omega^\infty$ and $\pi\nmid2$ then  $\pi\nmid g+1$.
By the choice of arithmetic parameters $a,b,c,d$, once $\pi\mid abcd$ we have $\pi\nmid g+1$ and thus $\omega\nmid g+1$. Remember that the parameters $a,b,c,d$ are chosen to satisfy several congruence relations, we will make use of them without mentioning Proposition \ref{parameter} again. We recall that $L$ is a local field containing $K_\pi$ for a certain place $\pi$ of $K$. If $L$ is non-archimedean, we have fixed a uniformizer $\omega$ generating the nonzero prime ideal of $\mathcal{O}_L$. When $\theta\in L$, the elements $A_\theta, B_\theta,$ and $D_\theta$ lie in $L$.

\medskip
\noindent\textbf{Case {\boldmath$0$}.} When $\theta=0$, then $\textbf{X}_0$ is a projective hyperelliptic curve defined by
\begin{equation}\label{eqcurve0}
as^{2}=b(t^{g+1}-A_0)(t^{g+1}-B_0)
\end{equation}
with $$A_0=bc^{2}d\quad\mbox{ and }\quad B_0=bc^{2}d+2c.$$
For any non-archimedean place $\pi$, the defining equation above is $\omega$-integral.
\begin{enumerate}[label=({\boldmath$0$}.\arabic*)]
\item When $\pi\in\Omega^0\cup\Omega^\infty$ or  $\omega\mid2c$, then $ab\in K_{\pi}^{*2}\subset L^{*2}$. We apply Proposition \ref{curve-pi=2infty} to conclude.
\item When $\pi\notin \Omega^0\cup\Omega^\infty$ and $\omega\nmid2abcA_0B_0$, then $\omega\nmid A_0-B_0$. We apply Proposition \ref{curve-pi} to conclude.
\item When $\pi\notin \Omega^0\cup\Omega^\infty$ and $\omega\mid a$, we know that $A_0=bc^{2}d\equiv1 ~~\mathrm{mod}~ \omega$. As $\omega\nmid g+1$, we apply Proposition \ref{curve-pig+1power} to conclude.
\item When $\pi\notin \Omega^0\cup\Omega^\infty$ and $\omega\mid b$, we obtain the equation
    $$as^2=(b^gt^{g+1}-c^2d)(b^{g+1}t^{g+1}-bc^2d-2c)$$
    via a change of coordinates replacing $(s,t)$ in (\ref{eqcurve0}) by $(bs,bt)$. Its reduction $\mathrm{mod}~\omega$ is given by $\mathfrak{a}s^2=2\mathfrak{c}^3\mathfrak{d}$, which has  smooth $\F_\omega$-points since $2\mathfrak{a}\mathfrak{c}^{3}\mathfrak{d} \in \mathbb{F}_b^{*2}\subset\mathbb{F}_\omega^{*2}$. They can be lifted to $L$-points by Hensel's lemma.
\item When $\pi\notin \Omega^0\cup\Omega^\infty$ and $\omega\nmid 2abc$ but $\omega\mid A_0B_0$, then $\omega\nmid A_0-B_0$. We apply Proposition \ref{curve-pi} to conclude.
\end{enumerate}

\noindent\textbf{Case {\boldmath$\infty$}.} When $\theta=\infty$, up to a $K$-isomorphism replacing $(s,t)$ in (\ref{eqcurve}) by $(a^{4h+2}s,a^{\frac{4h+2}{g+1}}t)$,
%(and repectively $(S,T)$ by $(S,a^{-\frac{4h+2}{g+1}}T)$)
the projective hyperelliptic curve $\textbf{X}_\infty$ is defined by
$$as^{2}=b(t^{g+1}-A^{\bigcdot}_\infty)(t^{g+1}-B^{\bigcdot}_\infty)$$
with
$$A^{\bigcdot}_\infty=a+b^{4h+3}c^{2}d\quad\mbox{ and }\quad B^{\bigcdot}_\infty=a+b^{4h+3}c^{2}d+2b^{4h+2}c.$$
For any non-archimedean place $\pi$, the defining equation above is $\omega$-integral.

\begin{enumerate}[label=({\boldmath$\infty$}.\arabic*)]
\item When $\pi\in\Omega^0\cup\Omega^\infty$ or $\omega\mid2c$, then $ab\in K_{\pi}^{*2}\subset L^{*2}$. We apply Proposition \ref{curve-pi=2infty} to conclude.
\item When $\pi\notin \Omega^0\cup\Omega^\infty$ and $\omega\nmid2abcA^{\bigcdot}_\infty B^{\bigcdot}_\infty$, then $\omega\nmid A^{\bigcdot}_\infty-B^{\bigcdot}_\infty$. We apply Proposition \ref{curve-pi} to conclude.
\item When $\pi\notin \Omega^0\cup\Omega^\infty$ and $\omega\mid a$, we know that $bc^{2}d\equiv1 ~\mathrm{mod}~\omega$. It follows that $A^{\bigcdot}_\infty~\mathrm{mod}~\omega$ is a nonzero $(g+1)$-th power  since $g+1\mid 4h+2$. As $\omega\nmid g+1$, we apply Proposition \ref{curve-pig+1power} to conclude.
\item When $\pi\notin \Omega^0\cup\Omega^\infty$ and $\omega\mid b$, we know that $A^{\bigcdot}_\infty\equiv a\equiv1 ~\mathrm{mod}~\omega$. As $\omega\nmid g+1$, we apply Proposition \ref{curve-pig+1power} to conclude.
\item When $\pi\notin \Omega^0\cup\Omega^\infty$ and $\omega\nmid2abc$  but $\omega\mid A^{\bigcdot}_\infty B^{\bigcdot}_\infty$, then $\omega\nmid A^{\bigcdot}_\infty-B^{\bigcdot}_\infty$. We apply Proposition \ref{curve-pi} to conclude.
\end{enumerate}

\noindent\textbf{Case {\boldmath$\theta$}.} When $\theta\in\P^1(L)$ such that $\theta\neq0$ or $\infty$, recall from (\ref{eqcurve}) that the projective hyperelliptic curve $\textbf{X}_\theta$  is defined by
\begin{equation}\label{eqcurvetheta}
as^2=b(t^{g+1}-A_\theta)(t^{g+1}-B_\theta)
\end{equation}
with
\begin{equation*}
\begin{split}
%C_\theta&=a^{2h+1}\theta^{g+1}-1,\\
A_\theta&=a^{4h+3}\theta^{2g+2}+bc^{2}dD_\theta^{2},\\
B_\theta&=a^{4h+3}\theta^{2g+2}+(bc^{2}d+2c)D_\theta^{2},\\
D_\theta&=a^{2h+1}b^{2h+1}\theta^{g+1}-1.
\end{split}
\end{equation*}
Our discussion on the local solvability of $\textbf{X}_\theta$ over $L$ will depend on the value of the integer $\v_\omega(\theta)$. We divide the rest of the proof into two subcases {\boldmath$\theta^+$} and {\boldmath$\theta^-$} as follows.

\medskip
\noindent\textbf{Case {\boldmath$\theta^+$}.} Suppose that $\v_\omega(\theta)\geq0$. Then for any non-archimedean place $\pi$, the defining equation above is $\omega$-integral.
\begin{enumerate}[label=({\boldmath$\theta^+$}.\arabic*)]
\item When $\pi\in\Omega^0\cup\Omega^\infty$ or  $\omega\mid2c$, then $ab\in K_{\pi}^{*2}\subset L^{*2}$. We apply Proposition \ref{curve-pi=2infty} to conclude.
\item When $\pi\notin \Omega^0\cup\Omega^\infty$ and $\omega\nmid2abcA_\theta B_\theta D_\theta$, then $\omega\nmid A_\theta-B_\theta$. We apply Proposition \ref{curve-pi} to conclude.
\item When $\pi\notin \Omega^0\cup\Omega^\infty$ and $\omega\mid a$, we know that $bc^{2}d\equiv1 ~\mathrm{mod}~ \omega$, hence $A_\theta\equiv1~\mathrm{mod}~ \pi$. As $\omega\nmid g+1$, we apply Proposition \ref{curve-pig+1power} to conclude.
\item When $\pi\notin \Omega^0\cup\Omega^\infty$ and $\omega\mid b$, three situations may happen.
    \begin{enumerate}[label=(\roman*)]
    \item If $\v_\omega(\theta^{2g+2})<\v_\omega(b)$, then $\v_\omega(A_\theta)=(2g+2)\v_\omega(\theta)$ is divisible by $g+1$. As $a\equiv1 ~\mathrm{mod}~ \omega$, we find that   $\omega^{-\v_\omega(A_\theta)}A_\theta\equiv(\omega^{-\v_\omega(\theta)}\theta)^{2g+2}~\mathrm{mod}~\omega$ which is a $(g+1)$-th power. As $\omega\nmid g+1$, we apply Proposition \ref{curve-pig+1power} to conclude.
    \item If $\v_\omega(\theta^{2g+2})>\v_\omega(b)$, by a change of coordinates replacing $(s,t)$ by $(bs,bt)$, the defining equation (\ref{eqcurvetheta}) of $\textbf{X}_\theta$ becomes
        $$as^2=(b^gt^{g+1}-a^{4h+3}\frac{\theta^{2g+2}}{b}-c^2dD_\theta^2)(b^{g+1}t^{g+1}-a^{4h+3}\theta^{2g+2}-bc^2dD_\theta^2-2cD_\theta^2).$$
        Since $D_\theta\equiv-1~\mathrm{mod}~ \omega$, the reduction $\mathrm{mod}~ \omega$ of the equation becomes $\mathfrak{a}s^2=2\mathfrak{c}^3\mathfrak{d}$ which has smooth $\F_\omega$-points by Proposition \ref{parameter}. They can be lifted to $L$-points by Hensel's lemma.
    \item If $\v_\omega(\theta^{2g+2})=\v_\omega(b)$, then this value is even and $\omega\mid A_\theta$. We write $b=\omega^{\v_\omega(b)}\tilde{b}$ with $\v_\omega(\tilde{b})=0$. By a change of coordinates replacing $(s,t)$ by $(\omega^{\frac{\v_\pi(b)}{2}}s,t)$, the defining equation (\ref{eqcurvetheta}) of $\textbf{X}_\theta$ becomes
    $$as^2=\tilde{b}(t^{g+1}-A_{\theta})(t^{g+1}-B_{\theta}).$$
    We apply Proposition \ref{curve-pi} to conclude.
    \end{enumerate}
\item When $\pi\notin \Omega^0\cup\Omega^\infty$ and $\omega\nmid2ab$ but $\omega\mid D_\theta$, then $D_\theta=a^{2h+1}b^{2h+1}\theta^{g+1}-1$ implies that $\v_\omega(\theta)=0$. As $g$ is odd, the reduction $\mathrm{mod}~\omega$ of this equality  implies that $ab\in L^{*2}$ by Hensel's lemma. We apply Proposition \ref{curve-pi=2infty} to conclude.
\item When $\pi\notin \Omega^0\cup\Omega^\infty$ and $\omega\nmid2abcD_\theta$   but $\omega\mid A_\theta B_\theta$, then $\omega\nmid A_\theta-B_\theta$. We apply Proposition \ref{curve-pi} to conclude.
\end{enumerate}

\medskip
\noindent\textbf{Case {\boldmath$\theta^-$}.} Suppose that $\v_\omega(\theta)=-l<0$. We write $\theta=\omega^{-l}\tilde{\theta}$ with $\v_\omega(\tilde{\theta})=0$. Up to an $L$-isomorphism  replacing $(s,t)$ in (\ref{eqcurvetheta}) by $(\omega^{-2(g+1)l}s,\omega^{-2l}t)$, the curve $\textbf{X}_\theta$ is defined by
\begin{equation}\label{eqcurvetheta-}
as^2=b(t^{g+1}-A_{\tilde{\theta}})(t^{g+1}-B_{\tilde{\theta}})
\end{equation}
with
\begin{equation*}
\begin{split}
A_{\tilde{\theta}}&=a^{4h+3}\tilde{\theta}^{2g+2}+bc^{2}dD_{\tilde{\theta}}^{2},\\
B_{\tilde{\theta}}&=a^{4h+3}\tilde{\theta}^{2g+2}+(bc^{2}d+2c)D_{\tilde{\theta}}^{2},\\
D_{\tilde{\theta}}&=a^{2h+1}b^{2h+1}\tilde{\theta}^{g+1}-\omega^{(g+1)l}.
\end{split}
\end{equation*}
Then for any non-archimedean place $\pi$, the defining equation above is $\omega$-integral. Notice that in this case $\omega\mid D_{\tilde{\theta}}$ if and only if $\omega\mid ab$.

\begin{enumerate}[label=({\boldmath$\theta^-$}.\arabic*)]
\item When $\pi\in\Omega^0\cup\Omega^\infty$ or  $\omega\mid2c$, then $ab\in K_{\pi}^{*2}\subset L^{*2}$. We apply Proposition \ref{curve-pi=2infty} to conclude.
\item When $\pi\notin \Omega^0\cup\Omega^\infty$ and $\omega\nmid2abcA_{\tilde{\theta}} B_{\tilde{\theta}}$, then $\omega\nmid A_{\tilde{\theta}}-B_{\tilde{\theta}}$. We apply Proposition \ref{curve-pi} to conclude.
\item When $\pi\notin \Omega^0\cup\Omega^\infty$ and $\omega\mid a$, we write $a=\omega^e\tilde{a}$ with $\v_\omega(\tilde{a})=0$ where $e=\v_\omega(a)>0$ is the ramification index of $L$ over $K_a$. According to the value of the integer $k=(g+1)l-(2h+1)e$, we have two cases.
\begin{enumerate}[label=(\roman*)]
\item If $k>0$ or $k<0$, we have respectively two expressions
    \begin{align*}
%    \begin{split}
    \omega^{-(4h+2)e}A_{\tilde{\theta}}&=\omega^e\tilde{a}^{4h+3}\tilde{\theta}^{2g+2}+bc^{2}d(\tilde{a}^{2h+1}b^{2h+1}\tilde{\theta}^{g+1}-\omega^k)^{2}&~~(k>0),\\
    \omega^{-(2g+2)l}A_{\tilde{\theta}}&=\omega^{e-2k}\tilde{a}^{4h+3}\tilde{\theta}^{2g+2}+bc^{2}d(\omega^{-k}\tilde{a}^{2h+1}b^{2h+1}\tilde{\theta}^{g+1}-1)^{2}&~~(k<0).
%    \end{split}
    \end{align*}
    By assumption $g+1\mid4h+2$, the elements $\omega^{-(4h+2)e}$ and $\omega^{-(2g+2)l}$ are always $(g+1)$-th power.  As  $\omega\nmid g+1$ and $bc^{2}d\equiv1 ~\mathrm{mod}~ \omega$, Hensel's lemma implies that $A_{\tilde{\theta}}$ is a $(g+1)$-th power in $L^*$ in both cases by looking at the reduction $\mathrm{mod}~\omega$ of the right-hand sides of the above expressions. Therefore  $\textbf{X}_\theta$ has an $L$-point with coordinates $(s,t)=(0,\sqrt[g+1]{A_{\tilde{\theta}}})$.
\item If $k=0$, then $e$ is even as $g$ is odd. By Hensel's lemma, we have $\tilde{a}b\in L^{*2}$ and thus $ab\in L^{*2}$ provided that $\v_\omega(\tilde{a}^{2h+1}b^{2h+1}\tilde{\theta}^{g+1}-1)>0$, then we conclude by Proposition \ref{curve-pi=2infty}. We may assume   $\v_\omega(\tilde{a}^{2h+1}b^{2h+1}\tilde{\theta}^{g+1}-1)=0$ in addition. If we denote by
    \begin{equation*}
    \begin{split}
    \tilde{A}_{\tilde{\theta}}=\omega^{-(2g+2)l}A_{\tilde{\theta}}&=\omega^e\tilde{a}^{4h+3}\tilde{\theta}^{2g+2}+bc^{2}d(\tilde{a}^{2h+1}b^{2h+1}\tilde{\theta}^{g+1}-1)^{2},\\
    \tilde{B}_{\tilde{\theta}}=\omega^{-(2g+2)l}B_{\tilde{\theta}}&=\omega^e\tilde{a}^{4h+3}\tilde{\theta}^{2g+2}+(bc^{2}d+2c)(\tilde{a}^{2h+1}b^{2h+1}\tilde{\theta}^{g+1}-1)^{2},
    \end{split}
    \end{equation*}
    then $\omega\nmid \tilde{A}_{\tilde{\theta}}\tilde{B}_{\tilde{\theta}}(\tilde{A}_{\tilde{\theta}}-\tilde{B}_{\tilde{\theta}})$.
    By a change of coordinates replacing
   $(s,t)$ by $(\omega^{(2g+2)l-\frac{e}{2}}s, \omega^{2l}t)$, the defining equation   (\ref{eqcurvetheta-}) of $\textbf{X}_\theta$ becomes
   $$\tilde{a}s^2=b(t^{g+1}-\tilde{A}_{\tilde{\theta}})(t^{g+1}-\tilde{B}_{\tilde{\theta}}).$$
   As $\omega\nmid2\tilde{a}b$, we apply Proposition \ref{curve-pi} to conclude.
\end{enumerate}
\item When $\pi\notin \Omega^0\cup\Omega^\infty$ and $\omega\mid b$, we know that $a\equiv1 ~\mathrm{mod}~ \omega$.  Then  $A_{\tilde{\theta}}~\mathrm{mod}~ \pi$ is a nonzero $(g+1)$-th power and we apply Proposition \ref{curve-pig+1power} to conclude.
\item When $\pi\notin \Omega^0\cup\Omega^\infty$ and $\omega\nmid2abc$ but $\omega\mid A_{\tilde{\theta}}B_{\tilde{\theta}}$. It follows that $\omega\nmid D_{\tilde{\theta}}$ and hence $\omega\nmid A_{\tilde{\theta}}-B_{\tilde{\theta}}$. We apply Proposition \ref{curve-pi} to conclude.
\end{enumerate}
\end{proof}

\subsection{Global nonexistence of degree $1$ zero-cycles}\label{subsectionglobal}\

The aim of this subsection is to prove  Theorem \ref{XYSarithmetic}(2). We first establish several preparatory results.
Recall that $\mathcal{S}\subset\mathbb{P}^4$ is a surface defined over $K$ by
\begin{equation}\label{eqsurface}
\left\{
\begin{split}
x^2-az^{2} &= -b(u-Av)(u-Bv)\\
x^2-ay^{2}&= -aC^2uv
\end{split} \right.%\tag{\#surface}
\end{equation}
where $a,b,A,B,C\in K$, or equivalently
\begin{equation*}
\left\{
\begin{split}
x^2-az^{2} &= -b\varphi\psi\\
x^2-ay^{2}&= -aC^2uv
\end{split} \right.
\end{equation*}
where $\varphi=u-Av$ and $\psi=u-Bv$. We suppose that $\mathcal{S}$ is smooth, which is the case for $\mathcal{S}={^{(h,g)}\nosp\textbf{S}_\theta}$ if $\theta\in\P^1(K)$ by Proposition \ref{XYSsmooth}.

When the constants $a,b,A,B,C,$ and $B-A$ are all nonzero, we consider the class of the following quaternion algebra defining an element of $\Br(K(\mathcal{S}))$ of order dividing $2$:
\begin{alignat*}{2}
\mathcal{A}&=\left(a,\frac{b(u-Av)}{v}\right)&&=\left(a,\frac{b\varphi}{v}\right)\\
&=\left(a,\frac{-(u-Bv)}{v}\right)&&=\left(a,\frac{-\psi}{v}\right)\\
&=\left(a,\frac{b(u-Av)}{-au}\right)&&=\left(a,\frac{b\varphi}{-au}\right)\\
&=\left(a,\frac{-(u-Bv)}{-au}\right) &&=\left(a,\frac{-\psi}{-au}\right)\in \Br(K(\mathcal{S})).
\end{alignat*}
In the equation above, the equalities of the left column follow from the  defining equations of  $\mathcal{S}$ and the fact that $\left(a,x^2-ay^2\right)=0$ and $\left(a,x^2-az^2\right)=0$ and those in the right column are simply a change of notation.

Note that $\mathcal{S}\cap V(u,v)$ is of codimension $2$ in $\mathcal{S}$ with complement $\mathcal{S}^0=\mathcal{S}\cap(D_+(u)\cup D_+(v))$. As $A\neq B$, for any point $P$ of $\mathcal{S}^0$ there exists an open neighborhood $U_P\subset \mathcal{S}^0$ containing $P$ such that one of the rational functions $\frac{b(u-Av)}{v}$ and $\frac{-(u-Bv)}{-au}$ is a nowhere vanishing regular function on $U_P$, hence the Azumaya algebra $\mathcal{A}$ extends to $\mathcal{S}^0$. Furthermore, by the purity theorem for Brauer groups, the element $\mathcal{A}$ lies in the subgroup $\Br(\mathcal{S})\subset\Br(K(\mathcal{S}))$. We will compute the local invariants $\inv_\pi(\mathcal{A}(P_\pi))\in\Q/\Z$ for places $\pi$ of $K$ and local rational points $P_\pi\in \mathcal{S}(K_\pi)$. The fact that $\mathcal{A}\in\Br(\mathcal{S})$ also follows in another way from \cite[Th\'eor\`eme 2.1.1]{Harari94} and the forthcoming calculation of $\inv_\pi(\mathcal{A}(P_\pi))$.

Since $\inv_\pi(\mathcal{A}(P_\pi))$ is a locally constant function of $P_\pi$, to compute its value we may always assume that the coordinates $x,y,z,u,$ and $v$ of $P_\pi$ are all nonzero so that their $\pi$-adic valuations are well-defined. For the same reason, we may also assume that $x^2-ay^2\neq0$ and $x^2-az^2\neq0$ as well.  Because the evaluation $\mathcal{A}(P_\pi)\in\Br(K_\pi)$ is of order dividing $2$, so $\inv_\pi(\mathcal{A}(P_\pi))\in\Q/\Z$ can take  value either $0$ or $\frac{1}{2}$. By theory of quaternions, to determine whether  $\inv_\pi(\mathcal{A}(P_\pi))=0$ or $\frac{1}{2}$ reduces to determine whether the related Hilbert symbol takes the value $1$ or $-1$. For the convenience of the reader, we recall the following fact which is a consequence of \cite[Proposition V.3.4]{Neu99}.
\begin{lemma}\label{hilbersymbol}
Let $K_\pi$ be a non-archimedean completion of $K$ of odd residue characteristic. For $\alpha,\beta\in\mathcal{O}_{K_\pi}$ with $\v_\pi(\alpha)=0$, the Hilbert symbol $(\alpha,\beta)_\pi$ equals  $-1$ if and only if $\v_\pi(\beta)$ is odd and $\alpha$ is not a square $\mathrm{mod}~\pi$.
\end{lemma}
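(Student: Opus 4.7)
The plan is to derive this directly from the tame Hilbert symbol formula recorded in \cite[Proposition V.3.4]{Neu99}. Since $\v_\pi(\alpha)=0$, the element $\alpha$ is a unit, so only the integer $n:=\v_\pi(\beta)$ and the residue class $\bar\alpha\in\F_\pi^*$ enter the computation. The bimultiplicativity of the Hilbert symbol reduces the problem to pairing $\alpha$ against a uniformizer and against a unit separately.

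First I would recall the well-known fact that in odd residue characteristic two units $u_1,u_2\in\mathcal{O}_{K_\pi}^*$ satisfy $(u_1,u_2)_\pi=1$: the extension $K_\pi(\sqrt{u_1})/K_\pi$ is either trivial or unramified (by Hensel's lemma applied to $X^2-u_1$), and in both cases every unit of $K_\pi$ is a local norm. Writing $\beta=\varpi^n u$ for a fixed uniformizer $\varpi$ and some unit $u$, this collapses the Hilbert symbol to
\[
(\alpha,\beta)_\pi=(\alpha,\varpi)_\pi^{\,n}\cdot(\alpha,u)_\pi=(\alpha,\varpi)_\pi^{\,n}.
\]

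Next I would split on the parity of $n$. If $n$ is even, then $(\alpha,\varpi)_\pi^{\,n}=1$ regardless of $\alpha$, so $(\alpha,\beta)_\pi=1$, consistent with the claim. If $n$ is odd, then $(\alpha,\beta)_\pi=(\alpha,\varpi)_\pi$, and the tame formula of \cite[Proposition V.3.4]{Neu99} evaluates this as $\bar\alpha^{(q-1)/2}\in\{\pm1\}$ where $q=|\F_\pi|$; by Euler's criterion this equals $1$ exactly when $\bar\alpha$ is a square in $\F_\pi^*$ and $-1$ otherwise. Combining the two cases yields the stated equivalence: $(\alpha,\beta)_\pi=-1$ precisely when $n$ is odd \emph{and} $\bar\alpha\notin\F_\pi^{*2}$.

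The only potential obstacle is notational rather than mathematical: one must verify that the normalization of the Hilbert symbol in \cite{Neu99} (the norm-residue version used throughout the paper) agrees with the formula invoked above. Once the conventions are aligned, the lemma is a direct unpacking of the tame symbol formula and requires no further ingredients.
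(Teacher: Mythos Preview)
Your proposal is correct and matches the paper's approach: the paper does not give a proof at all beyond declaring the lemma ``a consequence of \cite[Proposition V.3.4]{Neu99}'', and your argument is precisely the standard unpacking of that tame-symbol formula via bimultiplicativity and the triviality of the symbol on pairs of units.
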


The following definition may help to determine the value of $\inv_\pi(\mathcal{A}(P_\pi))$.
\begin{defn}\label{admissible}
Let $\pi$ be a non-archimedean place of $K$.
When the constants $a,b,A,B,C\in K$ are nonzero $\pi$-adic integers with $A\neq B$, we call (\ref{eqsurface}) a \emph{$\pi$-admissible} system of defining equations of $\mathcal{S}$.
\end{defn}

As in  Definition \ref{omega-integral} of $\omega$-integrality, the only serious condition in the definition of $\pi$-admissibility is that the constants are $\pi$-adic integers. We deliberately use different terminologies to distinguish two situations: the $\omega$-integrality is to study the local solvability over  $L$, changes of coordinates over  $L$ are allowed to obtain the $\omega$-integrality; while the $\pi$-admissibility is used to study evaluations of a global element of the Brauer group, only changes of coordinates over the global field $K$ are allowed in the forthcoming arguments.

\begin{proposition}\label{pi=good}
We consider a place $\pi$ such that $\pi\nmid2ab$ and $\pi\notin\Omega^\infty$.
Suppose that the smooth surface $\mathcal{S}$ is defined by a $\pi$-admissible system of equations.
Assume moreover that $\pi\nmid B-A$.
Then for all $P_\pi\in \mathcal{S}(K_\pi)$ we have $\displaystyle\inv_\pi(\mathcal{A}(P_\pi))=0$.
\end{proposition}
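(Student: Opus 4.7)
My plan is to compute the local invariant via the Lemma~\ref{hilbersymbol} criterion, exploiting the four equivalent representations of $\mathcal{A}$ to evade local singularities. If $a$ is a square modulo $\pi$, then $a\in K_\pi^{*2}$ by Hensel's lemma and every Hilbert symbol $(a,\cdot)_\pi$ vanishes, so I may assume $a$ is a non-square modulo $\pi$. In that case, since $a$ is a $\pi$-adic unit, Lemma~\ref{hilbersymbol} says $(a,\beta)_\pi = 1$ if and only if $\v_\pi(\beta)$ is even. Since $b$ and $-1$ are $\pi$-adic units, and using local constancy of $\inv_\pi(\mathcal{A}(\cdot))$ on $\mathcal{S}(K_\pi)$ to assume that all of $x, y, z, u, v, \varphi, \psi$ are nonzero at $P_\pi$, it suffices to prove that at least one of the four integers
\[
\v_\pi(\varphi) - \v_\pi(v),\quad \v_\pi(\psi) - \v_\pi(v),\quad \v_\pi(\varphi) - \v_\pi(u),\quad \v_\pi(\psi) - \v_\pi(u)
\]
is even.

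Suppose for contradiction that all four are odd. Subtracting them in pairs forces $\v_\pi(\varphi)\equiv\v_\pi(\psi) \mod 2$ and $\v_\pi(u)\equiv\v_\pi(v) \mod 2$, while these two common parities disagree. I then invoke the linear syzygy $\varphi - \psi = (B-A)v$; since $\pi\nmid B - A$, this yields $\v_\pi(\varphi - \psi) = \v_\pi(v)$. If $\v_\pi(\varphi)\neq\v_\pi(\psi)$, the left side equals $\min(\v_\pi(\varphi), \v_\pi(\psi))$, whose parity coincides with that of $\v_\pi(\varphi)$ and thus disagrees with that of $\v_\pi(v)$, a contradiction. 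Otherwise $\v_\pi(\varphi) = \v_\pi(\psi) =: k$; then $\v_\pi(v) \geq k$, and by the parity discrepancy $\v_\pi(v) > k$. But $\v_\pi(Av) = \v_\pi(A) + \v_\pi(v) > k$ since $\v_\pi(A)\geq 0$, which forces $\v_\pi(u) = k$ in order that $\v_\pi(\varphi) = \v_\pi(u - Av) = k$; this contradicts $\v_\pi(u)\equiv\v_\pi(v) \mod 2$.

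The main obstacle is not conceptual but organizational: one must deploy the four equivalent representations of $\mathcal{A}$ in unison, certifying a trivial Hilbert symbol at each $P_\pi$ via whichever of the four has an even $\pi$-adic valuation in its second entry. The essential arithmetic input is that the syzygy $\varphi - \psi = (B-A)v$ together with $\pi\nmid B-A$ and $\v_\pi(A)\geq 0$ is rigid enough to forbid all four parities from being simultaneously odd.
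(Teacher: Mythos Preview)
Your proof is correct and follows essentially the same approach as the paper: both reduce to showing that one of the four Hilbert-symbol entries has even $\pi$-adic valuation, and both exploit the syzygy $\varphi-\psi=(B-A)v$ together with $\pi\nmid B-A$ and the $\pi$-integrality of $A$. The paper organizes this as a direct two-case split on whether $\v_\pi(u)<\v_\pi(v)$ or $\v_\pi(u)\geq\v_\pi(v)$, exhibiting in each case a specific representation with valuation equal to $0$, whereas you argue by contradiction via parity constraints; the underlying content is the same.
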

\begin{proof}
It suffices to show that the evaluation at $P_\pi$ of one of the four rational functions $\frac{b\varphi}{v}$, $\frac{-\psi}{v}$, $\frac{b\varphi}{-au}$, $\frac{-\psi}{-au}$ appearing in the formula defining $\mathcal{A}$ has even $\pi$-adic valuation.
    \begin{enumerate}[label=(\roman*)]
    \item If $\v_{\pi}(u)< \v_{\pi}(v)$, then $\v_\pi(\varphi)=\v_\pi(u-Av)=\v_\pi(u)$ and therefore $\v_{\pi}(\frac{b\varphi}{-au})=0$ is even.
    \item If $\v_{\pi}(u)\geq \v_{\pi}(v)$, then $\v_\pi(\varphi)=\v_\pi(u-Av)\geq\v_\pi(v)$ and $\v_\pi(\psi)=\v_\pi(u-Bv)\geq\v_\pi(v)$. Therefore $\v_{\pi}(\frac{b\varphi}{v})\geq 0$ and $\v_{\pi}(\frac{-\psi}{v})\geq 0$. As $\frac{b\varphi}{v}+b\cdot\frac{-\psi}{v}=b(B-A)$ has $\pi$-adic valuation $0$, these two inequalities cannot be both strict, so one of them must be even (equal to $0$).
    \end{enumerate}
\end{proof}

\begin{proposition}\label{pi=c}
We consider a place $\pi$ such that $\pi\nmid2ab$ and $\pi\notin\Omega^\infty$.
Suppose that the smooth surface $\mathcal{S}$ is defined by a $\pi$-admissible system of equations.
Assume moreover that
\begin{itemize}
\item $\pi\nmid C$,
\item $a$ is not a square $\mathrm{mod}~ \pi$,
\item $\v_\pi(A)$ is even,
\item $\varpi^{-\v_\pi(A)}A$ is not a square $\mathrm{mod}~\pi$, where $\varpi\in K$ is such that $\v_\pi(\varpi)=1$.
\end{itemize}
Then  for all $P_\pi\in \mathcal{S}(K_\pi)$ we have $\displaystyle\inv_{\pi}(\mathcal{A}(P_{\pi}))=0$.
\end{proposition}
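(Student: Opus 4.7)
The plan is to reduce $\inv_\pi(\mathcal{A}(P_\pi))$ to a parity computation. Since $a$ is a non-square $\pi$-adic unit at an odd place, Lemma \ref{hilbersymbol} gives $(a,\beta)_\pi=1$ exactly when $\v_\pi(\beta)$ is even. Hence it suffices, for each $P_\pi$, to exhibit one of the four representatives $\frac{b\varphi}{v},\frac{-\psi}{v},\frac{b\varphi}{-au},\frac{-\psi}{-au}$ of $\mathcal{A}$ whose value has even $\pi$-adic valuation. Set $\alpha=\v_\pi(u)$, $\beta=\v_\pi(v)$, $\gamma=\v_\pi(A)$, $\delta=\v_\pi(B)$; by hypothesis $\gamma$ is even. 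A preliminary input is that $K_\pi(\sqrt a)/K_\pi$ is the unramified quadratic extension (since $a$ is a non-square unit at an odd place), so its norm group consists of elements of even $\pi$-valuation; applied to the defining equations $x^2-ay^2=-aC^2 uv$ and $x^2-az^2=-b\varphi\psi$, this forces $\alpha+\beta$ and $\v_\pi(\varphi)+\v_\pi(\psi)$ to both be even.

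I would then mirror the case analysis of Proposition \ref{pi=good} on the comparison between $\alpha$ and $\beta+\gamma$. In the cases $\alpha<\beta+\gamma$, $\alpha>\beta+\gamma$, and $\alpha=\beta+\gamma$ with the leading parts of $u$ and $Av$ not cancelling modulo $\pi$, one has $\v_\pi(\varphi)=\min(\alpha,\beta+\gamma)$, giving respectively $\v_\pi(b\varphi/(-au))=0$, $\v_\pi(b\varphi/v)=\gamma$, and $\v_\pi(b\varphi/v)=\gamma$, all even. What remains is the delicate situation $\alpha=\beta+\gamma$ with $\v_\pi(\varphi)>\alpha$; writing $u_0,v_0,A_0$ for the unit parts of $u,v,A$, the cancellation translates to $u_0\equiv A_0 v_0\pmod\pi$, and the hypothesis that $A_0$ is not a square modulo $\pi$ forces $u_0 v_0\equiv A_0 v_0^2\pmod\pi$ to be a non-square modulo $\pi$ as well.

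In this last situation, write $\v_\pi(\varphi)=\alpha+k$ with $k\geq 1$; the desired evenness of $\v_\pi(b\varphi/v)=\gamma+k$ reduces to $k$ being even. A further split on $\delta$ and on cancellation in $\psi$ handles most sub-cases directly: whenever $\v_\pi(\psi)=\alpha$ (namely $\delta>\gamma$, or $\delta=\gamma$ with $u_0\not\equiv B_0 v_0\pmod\pi$) the representative $-\psi/v$ has valuation $\gamma$; when $\delta<\gamma$ the parity constraint $\v_\pi(\varphi)+\v_\pi(\psi)\equiv 0\pmod 2$ forces $k\equiv\delta\pmod 2$, so $\delta$ even yields $k$ even; similarly for $\delta=\gamma$ with a cancellation of even order in $\psi$. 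The only sub-cases not resolved this way are those in which $k$ is odd; these I expect to be vacuous by exploiting the surface equations simultaneously. Concretely, $x^2-ay^2=-aC^2 uv$ pins down $\min(\v_\pi(x),\v_\pi(y))=(\alpha+\beta)/2$ and determines the unit class of $x^2-ay^2$ in $K_\pi^{*}/K_\pi^{*2}$, while $x^2-az^2=-b\varphi\psi$ has strictly larger $\pi$-valuation in these sub-cases; a residue-class analysis in $K_\pi^{*}/K_\pi^{*2}$, using that $a$ and $u_0 v_0$ are both non-squares modulo $\pi$ and that the norm form $x^2-az^2$ with its overshooting valuation would require $(x/\varpi^m)^2\equiv a(z/\varpi^m)^2\pmod\pi$ (impossible since $a$ is a non-square), should yield an inconsistency for $z$. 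The bookkeeping in this final vacuity argument is the main obstacle, but the non-square hypothesis on $A_0$ is tailored precisely to force the required contradiction.
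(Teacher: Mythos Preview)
Your approach is correct and shares the paper's core: reduce to parity via Lemma~\ref{hilbersymbol}, dispose of the comparisons $\v_\pi(u)\lessgtr\v_\pi(Av)$ and the non-cancelling equality directly, and in the cancelling case $\v_\pi(\varphi)>\v_\pi(u)=\v_\pi(Av)$ derive a contradiction from the two surface equations using that $a$ and $\varpi^{-\gamma}A$ are non-squares.

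The difference is that your detour through $\delta=\v_\pi(B)$ is unnecessary. The paper never inspects $\delta$: it uses only the crude bound $\v_\pi(\psi)\ge\v_\pi(v)$ (immediate from $\pi$-integrality of $B$) to get $\v_\pi(x^2-az^2)>\v_\pi(x^2-ay^2)=\alpha+\beta$, and then shows the \emph{entire} cancelling case is vacuous, not just your odd-$k$ residue, by a short split on $\v_\pi(x)$ versus $\v_\pi(y)$. Your observation $\min(\v_\pi(x),\v_\pi(y))=(\alpha+\beta)/2$ already collapses several of the paper's five sub-cases; what then remains is a two-way split. If the minimum equals $\v_\pi(x)$, the overshoot in $x^2-az^2$ forces $a\in\F_\pi^{*2}$ via $z$, exactly your ``$z$-inconsistency''. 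If instead $\v_\pi(x)>\v_\pi(y)=(\alpha+\beta)/2$, there is no contradiction from $z$; rather, comparing leading units in $x^2-ay^2=-aC^2uv$ gives $y_0^2\equiv C_0^2u_0v_0\pmod\pi$, hence $u_0v_0\in\F_\pi^{*2}$, contradicting $u_0v_0\equiv A_0v_0^2$ with $A_0$ a non-square. So both ingredients you list are indeed used, but the second contradiction runs through $y$ rather than $z$; once you see this the bookkeeping you flag as the main obstacle is brief, and the parity constraint $\v_\pi(\varphi)+\v_\pi(\psi)\equiv0$ is never needed.
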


\begin{remark}The assumption that  $\varpi^{-\v_\pi(A)}A$ is not a square $\mathrm{mod}~\pi$ does not depend on the choice of $\varpi$ since $\v_\pi(A)$ is even.
%\begin{enumerate}
%\item The assumption that  $\varpi^{-\v_\pi(A)}A$ is not a square $\mathrm{mod}~\pi$ does not depend on the choice of $\varpi$ since $\v_\pi(A)$ is even.
%\item Since  the elements $A$ and $B$ have symmetric status in the defining equations of $\mathcal{S}$, replacing the two assumptions on $A$ by the analogues on $B$, the same conclusion states valid.
%\end{enumerate}
\end{remark}
\begin{proof}
It suffices to show that the evaluation at $P_\pi$ of one of the four rational functions $\frac{b\varphi}{v}$, $\frac{-\psi}{v}$, $\frac{b\varphi}{-au}$, $\frac{-\psi}{-au}$ appeared in the formula defining $\mathcal{A}$ has even $\pi$-adic valuation.
    \begin{enumerate}[label=(\roman*)]
    \item If $\v_\pi(u)<\v_\pi(Av)$, then $\v_\pi(\varphi)=\v_\pi(u-Av)=\v_\pi(u)$ thus $\v_\pi(\frac{b\varphi}{-au})=0$ is even.
    \item If $\v_\pi(u)>\v_\pi(Av)$, then $\v_\pi(\varphi)=\v_\pi(u-Av)=\v_\pi(Av)$ thus $\v_\pi(\frac{b\varphi}{v})=\v_\pi(A)$ is even.
    \item If $\v_\pi(u)=\v_\pi(Av)$, then $\v_\pi(\varphi)=\v_\pi(u-Av)\geq\v_\pi(Av)$.
        \begin{enumerate}[label=(\Alph*)]
        \item If $\v_\pi(\varphi)=\v_\pi(u-Av)=\v_\pi(Av)$, then $\v_\pi(\frac{b\varphi}{v})=\v_\pi(A)$ is even.
        \item If $\v_\pi(\varphi)=\v_\pi(u-Av)>\v_\pi(Av)$, the defining equations of $\mathcal{S}$ imply that (we denote $\v_\pi(A)=2k$)
            \begin{equation}\label{piadicformula}
            \left\{
            \begin{split}
            \v_\pi(x^{2}-az^{2})&\geq 2\v_\pi(v)+\v_\pi(A)+1=2\v_\pi(v)+2k+1\\
            \v_\pi(x^{2}-ay^{2})&=2\v_\pi(v)+\v_\pi(A)\qquad=2\v_\pi(v)+2k
            \end{split}
            \right..%\tag{$\star$}
            \end{equation}
            These two formulas will always lead to contradictions as follows.
            \begin{enumerate}[label=(\alph*)]
            \item If $\v_\pi(x)<\v_\pi(y)$, then $\v_\pi(x^{2}-ay^{2})=\v_\pi(x^2)=2\v_\pi(x)$. Applying (\ref{piadicformula}), we find that $\v_\pi(x)=\v_\pi(v)+k$ and $\v_\pi(x^{2}-az^{2})>\v_\pi(x^2)$. But the last inequality implies that $a$ is a square $\mathrm{mod}~ \pi$  contradicting our assumption.
            \item If $\v_\pi(x)>\v_\pi(y)$, then $\v_\pi(x^{2}-ay^{2})=\v_\pi(y^2)$. Applying (\ref{piadicformula}), we find that $l=\v_\pi(v)=\v_\pi(y)-k$. We write $y=\varpi^{l+k}\tilde{y}, u=\varpi^{l+2k}\tilde{u},$ and $v=\varpi^l\tilde{v}$ with $\v_\pi(\tilde{y})=\v_\pi(\tilde{u})=\v_\pi(\tilde{v})=0$, where $\varpi\in K$ is such that $\v_\pi(\varpi)=1$. Substituting them into $x^2-ay^2=-aC^2uv$, it turns out that $\tilde{u}\tilde{v}$ is a nonzero square $\mathrm{mod}~ \pi$. But $\v_\pi(u-Av)>\v_\pi(Av)=l+2k$ implies that $\frac{\tilde{u}}{\tilde{v}}\cdot\frac{\varpi^{2k}}{A}=\frac{u}{Av}\equiv1~\mathrm{mod}~\pi$. Whence $\frac{A}{\varpi^{2k}}$ is a square $\mathrm{mod}~\pi$ contradicting  our assumption.
            \item If $\v_\pi(x)=\v_\pi(y)<\v_\pi(v)+k$, then (\ref{piadicformula}) implies that $\v_\pi(x^{2}-ay^{2})>\v_\pi(x^2)$. It follows that $a$ is a square $\mathrm{mod}~\pi$ contradicting  our assumption.
            \item If $\v_\pi(x)=\v_\pi(y)=\v_\pi(v)+k$, then (\ref{piadicformula}) implies that $\v_\pi(x^{2}-az^{2})>\v_\pi(x^2)$. It follows that $a$ is a square $\mathrm{mod}~\pi$ contradicting  our assumption.
            \item Finally $\v_\pi(x)=\v_\pi(y)>\v_\pi(v)+k$ never happens since $\v_\pi(x^2-ay^2)=2\v_\pi(v)+2k$ by (\ref{piadicformula}).
            \end{enumerate}
        \end{enumerate}
    \end{enumerate}
\end{proof}

\begin{proposition}\label{pi=a}
We consider a place $\pi\nmid 2b$ such that $\v_\pi(a)=1$.
Suppose that the smooth surface $\mathcal{S}$ is defined by a $\pi$-admissible system of equations.
Assume moreover that
\begin{itemize}
\item $\pi\nmid ABC(B-A)$,
\item $b$ is a square $\mathrm{mod}~\pi$, and
\item $B-A$ is not a square $\mathrm{mod}~\pi$.
\end{itemize}
Then for all $P_\pi\in \mathcal{S}(K_\pi)$ we have $\displaystyle\inv_\pi(\mathcal{A}(P_\pi))=\frac{1}{2}$.
\end{proposition}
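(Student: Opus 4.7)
The strategy is to evaluate $\mathcal{A}$ at an arbitrary local point $P_\pi\in\mathcal{S}(K_\pi)$ using one of the four equivalent expressions for $\mathcal{A}$, choosing the one whose denominator does not vanish at $P_\pi$ and whose numerator has a controllable $\pi$-adic valuation. Since $\inv_\pi(\mathcal{A}(P_\pi))$ is locally constant on $\mathcal{S}(K_\pi)$, I may assume $P_\pi$ has all projective coordinates nonzero with $x^{2}-ay^{2}\neq 0$ and $x^{2}-az^{2}\neq 0$, and after scaling I may further assume that the coordinates are $\pi$-adic integers with $\min(\v_\pi(x),\v_\pi(y),\v_\pi(z),\v_\pi(u),\v_\pi(v))=0$.

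The main step is a case analysis on the $\pi$-adic valuations of $u$ and $v$. The defining equation $x^{2}-ay^{2}=-aC^{2}uv$ combined with $\v_\pi(a)=1$ forces $\v_\pi(x)\geq 1$; reducing the first defining equation modulo $\pi$ and using $\pi\nmid b$ then yields $(u-Av)(u-Bv)\equiv 0\pmod{\pi}$. If exactly one of $u,v$ is a unit at $\pi$, the integrality of $A,B$ makes both $u-Av$ and $u-Bv$ units at $\pi$, contradicting the above congruence. The configuration $\v_\pi(u),\v_\pi(v)\geq 1$ is also excluded by the normalization: it would force a unit at $\pi$ among $y,z$ (since $\v_\pi(x)\geq 1$), but a unit $y$ combined with $x^2=a(y^2-C^2uv)$ forces $\v_\pi(y^{2}-C^{2}uv)$ to be odd, which is impossible as $\v_\pi(y^{2})=0$ and $\v_\pi(C^{2}uv)\geq 2$; a unit $z$ gives $\v_\pi(x^{2}-az^{2})=1$ whereas the right hand side of the first defining equation has $\pi$-adic valuation at least $2$. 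Hence $\v_\pi(u)=\v_\pi(v)=0$, and since $\pi\nmid B-A$, exactly one of $u-Av$ and $u-Bv$ vanishes modulo $\pi$.

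Finally I compute the Hilbert symbol in each of the two remaining subcases. If $u\equiv Bv\pmod{\pi}$, I use the form $\mathcal{A}=(a,b\varphi/v)$; the element $b\varphi/v$ is a $\pi$-adic unit with $b\varphi/v\equiv b(B-A)\pmod{\pi}$, which fails to be a square modulo $\pi$ because $b$ is a square and $B-A$ is not. If $u\equiv Av\pmod{\pi}$, I instead use $\mathcal{A}=(a,-\psi/v)$, and $-\psi/v\equiv B-A\pmod{\pi}$ is likewise a non-square modulo $\pi$. In both subcases, Lemma \ref{hilbersymbol} applied with $a$ as a uniformizer of $K_\pi$ yields $\inv_\pi(\mathcal{A}(P_\pi))=\frac{1}{2}$, uniformly in $P_\pi$.

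The delicate part is the valuation bookkeeping that isolates the configuration $\v_\pi(u)=\v_\pi(v)=0$; once this is done, the concluding Hilbert symbol evaluation is immediate and, crucially, independent of the chosen local point, which is exactly what is needed to assemble a \BMo later.
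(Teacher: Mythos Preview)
Your proof is correct and follows essentially the same route as the paper's: both arguments reduce to showing $\v_\pi(u)=\v_\pi(v)=0$ by excluding all other valuation configurations, then compute the Hilbert symbol via the residue of $b\varphi/v$ or $-\psi/v$ modulo $\pi$. The only cosmetic difference is that the paper first proves $\v_\pi(y)=0$ by an infinite-descent style argument and deduces $\v_\pi(u)=\v_\pi(v)=0$ from $\v_\pi(-aC^2uv)=1$, whereas you work directly with the configurations of $u,v$; the logical content is the same. One small wording slip: in the subcase where $v$ is a unit and $u$ is not, you invoke ``the integrality of $A,B$'' to conclude that $u-Av$ and $u-Bv$ are units, but integrality alone is not enough---you are actually using the hypothesis $\pi\nmid AB$.
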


\begin{proof}
We may assume that the  $\pi$-adic valuations of the homogeneous coordinates $(x:y:z:u:v)$ of $P_\pi$ are all $\geq0$ and at least one of them equals  $0$.

We will prove successively that $\v_\pi(x)>0$,  $\v_\pi(y)=0$, $\v_\pi(u)=0$, and $\v_\pi(v)=0$.
\begin{itemize}
\item The equation $x^{2}-ay^{2}=-aC^2uv$ implies that $\v_\pi(x)>0$. It turns out that $\v_\pi(-b\varphi\psi)=\v_\pi(x^2-az^2)\geq1$.
\item In order to prove that $\v_\pi(y)=0$, we are going to argue by Fermat's method of infinite descent. Suppose otherwise that $\v_\pi(y)>0$, then $\v_\pi(-aC^2uv)=\v_\pi(x^{2}-ay^{2})\geq2$, whence at least one of $\v_\pi(u)$ and $\v_\pi(v)$ is strictly positive since $\v_\pi(a)=1$. As $\pi\nmid A$ and $\pi\nmid B$, then $\v_\pi(v)=0$ would imply $\v_\pi(\varphi)=\v_\pi(\psi)=0$ which leads to a contradiction since $\v_\pi(-b\varphi\psi)\geq1$ and $\pi\nmid b$. Hence $\v_\pi(v)>0$, and $\v_\pi(-b\varphi\psi)\geq1$ again implies that $\v_\pi(u)>0$. It follows that $\v_\pi(\varphi)=\v_\pi(u-Av)\geq1$ and $\v_\pi(\psi)=\v_\pi(u-Bv)\geq1$, therefore $\v_\pi(x^2-az^2)=\v_\pi(-b\varphi\psi)\geq2$ and thus $\v_\pi(z)>0$, which contradicts to the assumption that at least one of $x,y,z,u,v$ has $a$-adic valuation $0$. So $\v_\pi(y)=0$.
\item Now $\v_\pi(-aC^2uv)=\v_\pi(x^2-ay^2)=1$, we deduce that $\v_\pi(u)=\v_\pi(v)=0$ since $\pi\nmid C$ and $\v_\pi(a)=1$.
\end{itemize}
From $\v_\pi(-b\varphi\psi)=\v_\pi(x^2-az^2)\geq1$, we know that at least one of $\v_\pi(\varphi)$ and $\v_\pi(\psi)$ is strictly positive. The fact that $\varphi-\psi=(B-A)v$ has $\pi$-adic valuation $0$ implies that only one of these two is strictly positive  and the other must be $0$. Two situations may arise.
    \begin{enumerate}[label=(\roman*)]
    \item If $\v_\pi(\varphi)>0$ and $\v_\pi(\psi)=0$, applying reduction $\mathrm{mod}~ \pi$ to $$\frac{-\psi}{v}=\frac{-\varphi}{v}+(B-A)$$ we find that $$\frac{-\psi}{v}\equiv B-A~\mathrm{mod}~ \pi$$ is a nonzero non-square by assumption. Therefore the  Hilbert symbol $(a,\frac{-\psi}{v})_\pi=-1$ and $\inv_\pi(\mathcal{A}_0(P_\pi))=\frac{1}{2}$.
    \item If $\v_\pi(\psi)>0$ and $\v_\pi(\varphi)=0$, applying reduction $\mathrm{mod}~ \pi$ to $$\frac{b\varphi}{v}=\frac{b\psi}{v}+b(B-A)$$ we find that $$\frac{b\varphi}{v}\equiv b(B-A)~\mathrm{mod}~ \pi$$ is a nonzero non-square by assumption. Therefore the  Hilbert symbol $(a,\frac{b\varphi}{v})_\pi=-1$ and $\inv_\pi(\mathcal{A}_0(P_\pi))=\frac{1}{2}$.
    \end{enumerate}
\end{proof}

\begin{proposition}\label{pi=2binfty}
Consider a place $\pi$ of $K$ such that $a\in K_\pi^{*2}$.
Suppose that the smooth surface $\mathcal{S}$ is defined by a system (not necessarily $\pi$-admissible) of equations with constants $a,b,A,B,C,$ and $B-A$ all nonzero.

Then for all $P_\pi\in \mathcal{S}(K_\pi)$  we have $\displaystyle\inv_{\pi}(\mathcal{A}(P_{\pi}))=0$.
\end{proposition}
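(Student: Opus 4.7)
The approach is to exploit the basic fact that the Hilbert symbol $(a,\beta)_\pi$ vanishes identically in $\beta$ as soon as $a$ is a square in $K_\pi^{\ast}$. Indeed, if $a=\alpha^{2}$ with $\alpha\in K_\pi^{\ast}$, then for any $\beta\in K_\pi^{\ast}$ the equation $aX^{2}+\beta Y^{2}=Z^{2}$ admits the nontrivial solution $(X,Y,Z)=(1,0,\alpha)$, so $(a,\beta)_\pi=1$. Since every one of the four representations of $\mathcal{A}$ given in the construction has the form $(a,f)$ for a suitable rational function $f$ on $\mathcal{S}$, base change from $K$ to $K_\pi$ sends $\mathcal{A}$ to the zero class in $\Br(\mathcal{S}\times_K K_\pi)\subset\Br(K_\pi(\mathcal{S}))$.

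From this, the conclusion is immediate once we know that evaluation at $P_\pi$ makes sense. This has already been taken care of in the setup preceding the propositions: on the open set $\mathcal{S}^{0}=\mathcal{S}\cap(D_+(u)\cup D_+(v))$, at least one of the four displayed rational functions is regular and nowhere vanishing in a neighborhood of any given point, and purity for Brauer groups extends $\mathcal{A}$ across the codimension-two closed subset $\mathcal{S}\cap V(u,v)$. Thus the evaluation $\mathcal{A}(P_\pi)\in\Br(K_\pi)$ is well defined for every $P_\pi\in\mathcal{S}(K_\pi)$, and by the previous paragraph it is the trivial class. Hence $\inv_\pi(\mathcal{A}(P_\pi))=0$, as required.

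The step requiring the most care is ensuring the evaluation is defined uniformly across all of $\mathcal{S}(K_\pi)$, especially at points where several of the four rational functions degenerate simultaneously; this is precisely the role of the remark that $A\neq B$ and of purity. Beyond that, there is no real obstacle, because the hypothesis $a\in K_\pi^{\ast 2}$ directly trivialises the Brauer class, with no need for the delicate case analysis by valuations that was necessary in Propositions \ref{pi=good}, \ref{pi=c}, and \ref{pi=a}.
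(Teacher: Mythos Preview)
Your proof is correct and follows essentially the same approach as the paper: since $a\in K_\pi^{*2}$, the class $\mathcal{A}$ becomes trivial in $\Br(\mathcal{S}_{K_\pi})$, so its evaluation at any $P_\pi$ is zero. The paper's proof is a two-sentence version of yours; you have simply spelled out why $(a,\beta)_\pi=1$ explicitly and recalled why the evaluation is well defined everywhere on $\mathcal{S}(K_\pi)$.
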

\begin{proof}
The assumption implies that $\mathcal{A}$ has trivial image in $\Br(\mathcal{S}_{K_\pi})$ where $\mathcal{S}_{K_\pi}=\mathcal{S}\times_{\textup{Spec}(K)} \textup{Spec}(K_\pi)$. The evaluation of $\mathcal{A}$ factors through $0$, thus $\inv_{\pi}(\mathcal{A}(P_{\pi}))=0$.
\end{proof}

We are ready to complete the proof.
\begin{proof}[Proof of Theorem \ref{XYSarithmetic}(2)]
The Amer\textendash Brumer theorem  \cite[Th\'eor\`eme 1]{Brumer78} states that the existence of a zero-cycle of degree $1$ is equivalent to the existence of a rational point on del Pezzo surfaces of degree $4$ defined over number fields. We are going to make use of the Brauer\textendash Manin obstruction to prove the nonexistence of $K$-rational points.

For each $\theta\in\mathbb{P}^1(K)$, we recall from (\ref{Seq}) in the convention \S \ref{subsectionconvention} that $\textbf{S}_\theta$ is defined by
\begin{equation}\label{Seqrecall}
\left\{
\begin{array}{l@{}l@{}l}
x^{2}-az^{2} &=& -b(u-A_\theta v)(u-B_\theta v)\\
x^{2}-ay^{2}&= &-aC_\theta^{2}uv
\end{array} \right.  .
\end{equation}
The constants
\begin{equation*}
\begin{split}
A_\theta&=a^{4h+3}\theta^{2g+2}+bc^{2}dD_\theta^{2},\\
B_\theta&=a^{4h+3}\theta^{2g+2}+(bc^{2}d+2c)D_\theta^{2},\\
C_\theta&=a^{2h+1}\theta^{g+1}-1,\\
D_\theta&=a^{2h+1}b^{2h+1}\theta^{g+1}-1,\\
\end{split}
\end{equation*}
if $\theta\neq\infty$; and
\begin{equation*}
\begin{split}
A_\infty&=a^{4h+3}+bc^{2}dD_\infty^{2},\\
B_\infty&=a^{4h+3}+(bc^{2}d+2c)D_\infty^{2},\\
C_\infty&=a^{2h+1},\\
D_\infty&=a^{2h+1}b^{2h+1};\\
\end{split}
\end{equation*}
are nonzero if $\theta\in\P^1(K)$ by Lemma \ref{ABCDnonzero}(1).
We choose an element 
\begin{equation}\label{brclassA}
\mathcal{A}_\theta=\left(a,\frac{b(u-A_\theta v)}{v}\right)\in\Br(\textbf{S}_\theta)
\end{equation} as at the beginning of \S \ref{subsectionglobal}, and
we claim that for all $P_\pi\in\textbf{S}_\theta(K_\pi)$
\begin{equation*}
\inv_{\pi}(\mathcal{A}_\theta(P_{\pi}))=
\left\{
\begin{split}
0,\mbox{ if }\pi\neq a,\\
\frac{1}{2},\mbox{ if }\pi=a.
\end{split}
\right.
\end{equation*}
Then
$$\sum_{\pi\in\Omega}\inv_\pi(\mathcal{A}_\theta(P_\pi))=\frac{1}{2}\neq0\in\Q/\Z,$$
and the existence of Brauer\textendash Manin obstruction to the Hasse principle allow us to conclude that there is no $K$-rational point on $\textbf{S}_\theta$.

In order to prove the claim, our discussion  is divided into three cases according to the value of $\theta$. Remember that the parameters $a,b,c,d$ are chosen to satisfy several congruence relations, we will make use of them without mentioning Proposition \ref{parameter} again.

\medskip
\noindent\textbf{Case {\boldmath$0$}.} When $\theta=0$, the fiber $\textbf{S}_0$ is defined  by  $\pi$-admissible equations (for any $\pi$)
\begin{equation*}
\left\{
\begin{split}
x^2-az^{2} &= -b(u-A_0v)(u-B_0v)\\
x^2-ay^{2}&= -aC_0^2uv
\end{split} \right.,
\end{equation*}
with $A_0=bc^2d$, $B_0=bc^2d+2c$, and $C_0=1$.
\begin{enumerate}[label=({\boldmath$0$}.\arabic*)]
\item When $\pi\mid2b$ or $\pi\in\Omega^\infty$, then $a\in K_\pi^{*2}$. Then Proposition \ref{pi=2binfty} implies that  $\inv_{\pi}(\mathcal{A}_0(P_{\pi}))=0$.
\item When $\pi\nmid2abc$ and $\pi\notin\Omega^\infty$, then $\pi\nmid B_0-A_0$. Proposition \ref{pi=good} implies that  $\inv_{\pi}(\mathcal{A}_0(P_{\pi}))=0$.
\item When $\pi=c$, then  $c\nmid C_0$. Moreover $\v_c(A_0)=2$ is even, and neither $c^{-2}A_0=bd$ nor $a$ is a square $\mathrm{mod}~ c$. Then Proposition \ref{pi=c} implies that  $\inv_c(\mathcal{A}_0(P_c))=0$.
\item When $\pi=a$, then  $a\nmid A_0B_0C_0(B_0-A_0)$. Moreover $b$ is a square $\mathrm{mod}~ a$ and $B_0-A_0=2c$ is not a square $\mathrm{mod}~ a$. Then Proposition \ref{pi=a} implies that $\inv_a(\mathcal{A}_0(P_a))=\frac{1}{2}$.
\end{enumerate}

\medskip
\noindent\textbf{Case {\boldmath$\infty$}.} When $\theta=\infty$,  the fiber $\textbf{S}_\infty$ is defined, up to a $K$-isomorphism replacing $(y,z,u,v)$ in (\ref{Seqrecall}) by $(y,z,u,a^{-4h-2}v)$, by  $\pi$-admissible equations (for any $\pi$)
\begin{equation*}
\left\{
\begin{split}
x^2-az^{2} &= -b(u-A^{\bigcdot}_\infty v)(u-B^{\bigcdot}_\infty v)\\
x^2-ay^{2}&= -aC^{\bigcdot2}_\infty uv
\end{split} \right.,
\end{equation*}
with $A^{\bigcdot}_\infty=a+b^{4h+3}c^{2}d\neq0$, $B^{\bigcdot}_\infty=a+b^{4h+3}c^{2}d+2b^{4h+2}c\neq0$, and $C^{\bigcdot}_\infty=1$.
As $a^{-4h-2}$ is a square in $K^*$, the Brauer element $\mathcal{A}_\infty$ is identified with
$$\mathcal{A}^{\bigcdot}_\infty=\left(a,\frac{b(u-A^{\bigcdot}_\infty v)}{v}\right)\in\Br(\textbf{S}_\infty)$$
under the $K$-isomorphism.
\begin{enumerate}[label=({\boldmath$\infty$}.\arabic*)]
\item When $\pi\mid2b$ or $\pi\in\Omega^\infty$, then $a\in K_\pi^{*2}$. Then Proposition \ref{pi=2binfty} implies that  $\inv_{\pi}(\mathcal{A}^{\bigcdot}_\infty(P_{\pi}))=0$.
\item When $\pi\nmid2abc$ and $\pi\notin\Omega^\infty$, then $\pi\nmid B^{\bigcdot}_\infty-A^{\bigcdot}_\infty$. Proposition \ref{pi=good} implies that  $\inv_{\pi}(\mathcal{A}^{\bigcdot}_\infty(P_{\pi}))=0$.
\item When $\pi=c$, then  $c\nmid C^{\bigcdot}_\infty$. Moreover $\v_c(A^{\bigcdot}_\infty)=0$ is even, and neither $A^{\bigcdot}_\infty=a+b^{4h+3}c^2d$ nor $a$ is a square $\mathrm{mod}~ c$. Then Proposition \ref{pi=c} implies that  $\inv_c(\mathcal{A}^{\bigcdot}_\infty(P_c))=0$.
\item When $\pi=a$, then  $a\nmid A^{\bigcdot}_\infty B^{\bigcdot}_\infty C^{\bigcdot}_\infty(B^{\bigcdot}_\infty-A^{\bigcdot}_\infty)$. Moreover $b$ is a square $\mathrm{mod}~ a$ and $B^{\bigcdot}_\infty-A^{\bigcdot}_\infty=2b^{4h+2}c$ is not a square $\mathrm{mod}~ a$. Then Proposition \ref{pi=a} implies that $\inv_a(\mathcal{A}^{\bigcdot}_\infty(P_a))=\frac{1}{2}$.
\end{enumerate}

\medskip
\noindent\textbf{Case {\boldmath$\theta$}.} When $\theta\in\mathbb{P}^1(K)$ such that $\theta\neq0$ or $\infty$, though the  proof is similar, the argument is rather complicated in this general case. The fiber $\textbf{S}_\theta$ is defined by
\begin{equation*}
\left\{
\begin{split}
x^{2}-az^{2} &=  -b(u-A_\theta v)(u-B_\theta v) \\
x^{2}-ay^{2} &=      -aC_\theta^{2}uv
\end{split}
\right.
\end{equation*}

with
\begin{equation*}
\begin{split}
A_\theta&=a^{4h+3}\theta^{2g+2}+bc^{2}dD_\theta^{2},\\
B_\theta&=a^{4h+3}\theta^{2g+2}+(bc^{2}d+2c)D_\theta^{2},\\
C_\theta&=a^{2h+1}\theta^{g+1}-1,\\
D_\theta&=a^{2h+1}b^{2h+1}\theta^{g+1}-1.\\
\end{split}
\end{equation*}
But $A_\theta, B_\theta,$ and $C_\theta$ may not be $\pi$-adic integers depending on $\theta$ and $\pi$, which means that the  defining equations of $\textbf{S}_\theta$ may not be $\pi$-admissible. In those non-$\pi$-admissible cases we have to take changes of coordinates instead of applying directly the preparatory propositions.

\begin{enumerate}[label=({\boldmath$\theta$}.\arabic*)]
\item When $\pi\mid2b$ or $\pi\in\Omega^\infty$, then $a\in K_\pi^{*2}$. Then Proposition \ref{pi=2binfty} implies that  $\inv_{\pi}(\mathcal{A}_\theta(P_{\pi}))=0$.
\item When $\pi\nmid 2abc$  and $\pi\notin\Omega^\infty$, according to the $\pi$-adic valuation of $D_\theta=a^{2h+1}b^{2h+1}\theta^{g+1}-1$ three situations may happen.
    \begin{enumerate}[label=(\roman*)]
    \item If $\v_{\pi}(D_\theta)=0$, then $\v_{\pi}(\theta)\geq0$. Therefore $A_\theta$, $B_\theta$, and $C_\theta$ are $\pi$-adic integers and $\textbf{S}_\theta$ is defined by a $\pi$-admissible system of equations. As $B_\theta-A_\theta=2cD_\theta^2$, we know that $\pi\nmid B_\theta-A_\theta$. We can apply Proposition \ref{pi=good} to conclude.
    \item If $\v_{\pi}(D_\theta)>0$, then $\v_{\pi}(\theta)=0$. As above, the surface $\textbf{S}_\theta$ is defined by a $\pi$-admissible system of equations. But now $\pi\mid B_\theta-A_\theta$, Proposition \ref{pi=good} cannot be applied.
        \begin{enumerate}[label=(\Alph*)]
        \item Suppose that $\pi\nmid C_\theta$. We know that $\v_\pi(A_\theta)=0$ is even. Once $a$ is not a square $\mathrm{mod}~\pi$, neither is $A_\theta$, then we apply Proposition \ref{pi=c} to conclude. Otherwise $a$ is a square $\mathrm{mod}~\pi$, then $a\in K_\pi^{*2}$ by Hensel's lemma and we apply Proposition \ref{pi=2binfty} to conclude.
        \item Suppose that $\pi\mid C_\theta$. Then $C_\theta=a^{2h+1}\theta^{g+1}-1$ implies that $a$ is a square $\mathrm{mod}~\pi$ since $g$ is odd. Then $a\in K_\pi^{*2}$ by Hensel's lemma. We apply Proposition \ref{pi=2binfty} to conclude.
        \end{enumerate}
    \item If $\v_{\pi}(D_\theta)<0$, then $\v_\pi(\theta)=-l<0$. We write  $\theta=\varpi^{-l}\tilde{\theta}$ with $\v_{\pi}(\tilde{\theta})=0$, where $\varpi\in K$ is such that $\v_\pi(\varpi)=1$. We substitute it to the defining equations of $\textbf{S}_\theta$. After the $K$-isomorphism given by the identifications $\tilde{x}=\varpi^{(2g+2)l}x$, $\tilde{y}=\varpi^{(2g+2)l}y$, $\tilde{z}=\varpi^{(2g+2)l}z$, $\tilde{u}=\varpi^{(2g+2)l}u$, and $\tilde{v}=v$, the surface $\textbf{S}_\theta$ becomes the surface $\tilde{S}$ defined by
        \begin{equation*}
        \left\{
        \begin{split}
        \tilde{x}^{2}-a\tilde{z}^{2} &=  -b(\tilde{u}-\tilde{A}\tilde{v})(\tilde{u}-\tilde{B} \tilde{v}) \\
        \tilde{x}^{2}-a\tilde{y}^{2} &=      -a\tilde{C}^{2}\tilde{u}\tilde{v}
        \end{split}
        \right.
        \end{equation*}
        where
        \begin{equation*}
        \begin{split}
        \tilde{A}&=a^{4h+3}\tilde{\theta}^{2g+2}+bc^{2}d\tilde{D}^{2},\\
        \tilde{B}&=a^{4h+3}\tilde{\theta}^{2g+2}+(bc^{2}d+2c)\tilde{D}^{2},\\
        \tilde{C}&=a^{2h+1}\tilde{\theta}^{g+1}-\varpi^{(g+1)l},\\
        \tilde{D}&=a^{2h+1}b^{2h+1}\tilde{\theta}^{g+1}-\varpi^{(g+1)l}.
        \end{split}
        \end{equation*}
        It is clear that this system of defining equations of $\tilde{S}$ is $\pi$-admissible. Moreover $\pi\nmid\tilde{B}-\tilde{A}$.
        Under this isomorphism, the element $\mathcal{A}_\theta\in\Br(\textbf{S}_\theta)$ identifies with $\displaystyle\left(a,\frac{\varpi^{-(2g+2)l}b(\tilde{u}-\tilde{A}\tilde{v})}{\tilde{v}}\right)=\left(a,\frac{b(\tilde{u}-\tilde{A}\tilde{v})}{\tilde{v}}\right)$
        which is exactly $\tilde{\mathcal{A}}\in\Br(\tilde{S})$ arisen from the equations of $\tilde{S}$. By Proposition \ref{pi=good}, we have $\inv_\pi(\mathcal{A}_\theta(P_\pi))=\inv_\pi(\tilde{\mathcal{A}}(\tilde{P}_\pi))=0$, where $\tilde{P}_\pi\in\tilde{S}(K_\pi)$ is the image of $P_\pi\in\textbf{S}_\theta(K_\pi)$ under the $K$-isomorphism.
    \end{enumerate}

\item When $\pi=c$, three situations may happen.
    \begin{enumerate}[label=(\roman*)]
    \item If $\v_c(\theta)>0$, then $c\nmid C_\theta$, $c\nmid D_\theta$ and the system of equations defining $\textbf{S}_\theta$ is $c$-admissible. As $g$ is odd, we have $g\geq1$ and therefore $\v_c(A_\theta)=2$ is even. Moreover neither $c^{-2}A_\theta\equiv bdD_\theta^2~\mathrm{mod}~ c$ nor $a$ is not a square $\mathrm{mod}~ c$. We apply Proposition \ref{pi=c} to conclude.
    \item If $\v_c(\theta)=0$, then the system of equations defining $\textbf{S}_\theta$ is $c$-admissible. It turns out that $c\nmid C_\theta$ since otherwise $a^{2h+1}\theta^{g+1}\equiv1~\mathrm{mod}~ c$ together with the assumption that $g$ is odd would imply that $a$ is a square $\mathrm{mod}~ c$ which contradicts to Proposition \ref{parameter}. It is clear that $\v_c(A_\theta)=0$. Neither $a$ nor $A_\theta\equiv a^{4h+3}\theta^{2g+2}~\mathrm{mod}~ c$ is a square. We apply Proposition \ref{pi=c} to conclude.
    \item If $\v_c(\theta)<0$, then we write $\v_c(\theta)=-l$ and $\theta=c^{-l}\tilde{\theta}$ with $\v_c(\tilde{\theta})=0$. We substitute it to the defining equations of $\textbf{S}_\theta$.

        After the $K$-isomorphism given by the identifications $\tilde{x}=c^{(2g+2)l}x$, $\tilde{y}=c^{(2g+2)l}y$, $\tilde{z}=c^{(2g+2)l}z$, $\tilde{u}=c^{(2g+2)l}u$, and $\tilde{v}=v$, the surface $\textbf{S}_\theta$ becomes the surface $\tilde{S}$ defined by
        \begin{equation*}
        \left\{
        \begin{split}
        \tilde{x}^{2}-a\tilde{z}^{2} &=  -b(\tilde{u}-\tilde{A}\tilde{v})(\tilde{u}-\tilde{B} \tilde{v}) \\
        \tilde{x}^{2}-a\tilde{y}^{2} &=      -a\tilde{C}^{2}\tilde{u}\tilde{v}
        \end{split}
        \right.
        \end{equation*}
        where
        \begin{equation*}
        \begin{split}
        \tilde{C}&=a^{2h+1}\tilde{\theta}^{g+1}-c^{(g+1)l},\\
        \tilde{D}&=a^{2h+1}b^{2h+1}\tilde{\theta}^{g+1}-c^{(g+1)l},\\
        \tilde{A}&=a^{4h+3}\tilde{\theta}^{2g+2}+bc^{2}d\tilde{D}^{2},\\
        \tilde{B}&=a^{4h+3}\tilde{\theta}^{2g+2}+(bc^{2}d+2c)\tilde{D}^{2}.
        \end{split}
        \end{equation*}
        It is clear that this system of defining equations of $\tilde{S}$ is $c$-admissible.
        Under this isomorphism, the element $\mathcal{A}_\theta\in\Br(\textbf{S}_\theta)$ identifies with $\displaystyle\left(a,\frac{c^{-(2g+2)l}b(\tilde{u}-\tilde{A}\tilde{v})}{\tilde{v}}\right)=\left(a,\frac{b(\tilde{u}-\tilde{A}\tilde{v})}{\tilde{v}}\right)$
        which is exactly $\tilde{\mathcal{A}}\in\Br(\tilde{S})$ arisen from the equations of $\tilde{S}$. We know that $c\nmid\tilde{C}$. Moreover $\v_c(\tilde{A})=0$ is even, and neither $a~\mathrm{mod}~ c$ nor $\tilde{A}\equiv a^{4h+3}\tilde{\theta}^{2g+2}~\mathrm{mod}~ c$ is a square. By Proposition \ref{pi=c}, we have $\inv_c(\mathcal{A}_\theta(P_c))=\inv_c(\tilde{\mathcal{A}}(\tilde{P}_c))=0$, where $\tilde{P}_c\in\tilde{S}(K_c)$ is the image of $P_c\in\textbf{S}_\theta(K_c)$ under the $K$-isomorphism.
    \end{enumerate}
\item When $\pi=a$, two situations may happen.
    \begin{enumerate}[label=(\roman*)]
    \item If $\v_a(\theta)\geq0$, then $a\nmid C_\theta(B_\theta-A_\theta)$ and the system of defining equations of $\textbf{S}_\theta$ is $a$-admissible. Because $a\nmid bcd+2$, we have $a\nmid A_\theta B_\theta$. It also follows from Proposition \ref{parameter} that $B_\theta-A_\theta\equiv 2cD^2_\theta~\mathrm{mod}~ a$ is not a square while $b~\mathrm{mod}~ a$ is a square. We apply Proposition \ref{pi=a} to conclude.
    \item If $\v_a(\theta)<0$, then we write $\v_a(\theta)=-l$ and $\theta=a^{-l}\tilde{\theta}$ with $\v_a(\tilde{\theta})=0$. We substitute it to the defining equations of $\textbf{S}_\theta$. As $g$ is odd $(g+1)l\neq2h+1$, only the following two cases may happen, they will end up with the same conclusion.
        \begin{itemize}
        \item When $(g+1)l <2h+1$, we denote by $k=2h+1-(g+1)l>0$. The system of defining equations of $\textbf{S}_\theta$ becomes the following $a$-admissible system
                \begin{equation*}
                \left\{
                \begin{split}
                x^{2}-az^{2} &=  -b(u-\tilde{A}v)(u-\tilde{B} v) \\
                x^{2}-ay^{2} &=      -a\tilde{C}^{2}uv
                \end{split}
                \right.
                \end{equation*}
                where
                \begin{equation*}
                \begin{split}
                \tilde{C}&=a^k\tilde{\theta}^{g+1}-1,\\
                \tilde{D}&=a^kb^{2h+1}\tilde{\theta}^{g+1}-1,\\
                \tilde{A}&=a^{2k+1}\tilde{\theta}^{2g+2}+bc^{2}d\tilde{D}^{2},\\
                \tilde{B}&=a^{2k+1}\tilde{\theta}^{2g+2}+(bc^{2}d+2c)\tilde{D}^{2}.
                \end{split}
                \end{equation*}
                We see that $a\nmid \tilde{A}\tilde{B}\tilde{C}(\tilde{B}-\tilde{A})$ and $\tilde{B}-\tilde{A}$ is not a square $\mathrm{mod}~ a$ while $b$ is a square $\mathrm{mod}~ a$.
        \item When $(g+1)l >2h+1$, after the $K$-isomorphism given by the identifications $\tilde{x}=a^{(2g+2)l-4h-2}x$, $\tilde{y}=a^{(2g+2)l-4h-2}y$, $\tilde{z}=a^{(2g+2)l-4h-2}z$, $\tilde{u}=a^{(2g+2)l-4h-2}u$, and $\tilde{v}=v$, the surface $\textbf{S}_\theta$ becomes the surface $\tilde{S}$ defined by
                \begin{equation*}
                \left\{
                \begin{split}
                \tilde{x}^{2}-a\tilde{z}^{2} &=  -b(\tilde{u}-\tilde{A}\tilde{v})(\tilde{u}-\tilde{B} \tilde{v}) \\
                \tilde{x}^{2}-a\tilde{y}^{2} &=      -a\tilde{C}^{2}\tilde{u}\tilde{v}
                \end{split}
                \right.
                \end{equation*}
                where
                \begin{equation*}
                \begin{split}
                \tilde{C}&=\tilde{\theta}^{g+1}-a^{(g+1)l-2h-1},\\
                \tilde{D}&=b^{2h+1}\tilde{\theta}^{g+1}-a^{(g+1)l-2h-1},\\
                \tilde{A}&=a\tilde{\theta}^{2g+2}+bc^{2}d\tilde{D}^{2},\\
                \tilde{B}&=a\tilde{\theta}^{2g+2}+(bc^{2}d+2c)\tilde{D}^{2}.
                \end{split}
                \end{equation*}
                It is clear that this system of defining equations of $\tilde{S}$ is $a$-admissible.
                Under this $K$-isomorphism, the element $\mathcal{A}_\theta\in\Br(\textbf{S}_\theta)$ identifies with $\displaystyle\left(a,\frac{a^{4h+2-(2g+2)l}b(\tilde{u}-\tilde{A}\tilde{v})}{\tilde{v}}\right)=\left(a,\frac{b(\tilde{u}-\tilde{A}\tilde{v})}{\tilde{v}}\right)$
                which is exactly $\tilde{\mathcal{A}}\in\Br(\tilde{S})$ arisen from the equations of $\tilde{S}$. Proposition \ref{parameter} implies that $a\nmid \tilde{A}\tilde{B}\tilde{C}(\tilde{B}-\tilde{A})$ and $\tilde{B}-\tilde{A}$ is not a square $\mathrm{mod}~ a$ while $b$ is a square $\mathrm{mod}~ a$.
        \end{itemize}
        In both cases, we apply Proposition \ref{pi=a} to obtain $\inv_a(\mathcal{A}_\theta(P_a))=\frac{1}{2}$.
    \end{enumerate}
\end{enumerate}
\end{proof}

\begin{remark}\label{remarktheta=0}
We have completed the proof of Theorem \ref{XYSarithmetic}, where we do not make use of all assumptions on $g$ and $h$ if we restrict ourselves to the case where $\theta=0$. In this case, the integers $h$ and $g$ disappear from the definition of the fibers  $\textbf{Y}_0$ and $\textbf{S}_0$, and $h$ disappears from the definition of $\textbf{X}_0$. The assumption $g+1\mid4h+2$ is surplus. The assumption that $g$ is odd is required only when we construct $\delta_0:\textbf{X}_0\To\textbf{Y}_0$. In summary, the local solvability of $\textbf{X}_0$, $\textbf{Y}_0$, and $\textbf{S}_0$ as well as the nonexistence of degree $1$ global zero-cycles on $\textbf{Y}_0$ and $\textbf{S}_0$ hold with no assumption on positive integers $h$ and $g$. But for the nonexistence of degree $1$ global zero-cycles on $\textbf{X}_0$, our proof requires to assume that $g$ is odd.
\end{remark}

\section{Total spaces of the algebraic families}\label{sectiontotalspace}

By using the fibration method developed by Harari, we can study the arithmetic properties of  the total spaces of our algebraic families.

\subsection{Singular loci}\label{subsectionsingularloci}\

The total spaces $^{h,g}\nosp\textbf{X}$, $^{h,g}\nosp\textbf{Y}$, and $^{h,g}\nosp\textbf{S}$ of our algebraic families are not smooth. Their singular loci denoted respectively by $^{h,g}\nosp\textbf{X}^\textup{sing}$, $^{h,g}\nosp\textbf{Y}^\textup{sing}$, and $^{h,g}\nosp\textbf{S}^\textup{sing}$ are closed subsets (endowed with reduced structure). We are going to describe these loci in this subsection.

We define some objects that will appear in this subsection. Recall from the convention in \S \ref{subsectionconvention} that $C_\theta$ and $D_\theta$ are polynomials in $K[\theta]$, which define two disjoint finite closed subschemes
\begin{equation*}
\begin{split}
\mathcal{F}_C&=\Spec(K[\theta]/(a^{2h+1}\theta^{g+1}-1))\\
\mathcal{F}_D&=\Spec(K[\theta]/(a^{2h+1}b^{2h+1}\theta^{g+1}-1))
\end{split}
\end{equation*}
of $\P^1\setminus\{\infty\}$.
We also define  a plane curve $\mathcal{C}\subset\P^4$ as the intersection of hyperplanes $x=0$, $y=0$, and the quadric given by the following equation in homogeneous coordinates $(x:y:z:u:v)$
$$az^2=b\left[u-\left(a+bc^2d(b^{2h+1}-1)^2\right)v\right]\left[u-\left(a+(bc^2d+2c)(b^{2h+1}-1)^2\right)v\right].$$
As the corresponding symmetric matrix has nonzero determinant, we get a smooth conic over $K$.

\begin{proposition}\label{Ssing}
The singular locus $^{h,g}\nosp\textbf{S}^\textup{sing}$ has codimension  $2$ in $^{h,g}\nosp\textbf{S}$. More precisely, it is a union of $\mathcal{C}\times\mathcal{F}_C$ and a certain finite set of closed points whose projections to $\P^1$ do not intersect $\mathcal{F}_C$.
\end{proposition}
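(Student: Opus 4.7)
The plan is to apply the Jacobian criterion on the affine chart $\textbf{S}' \subset \mathbb{P}^4 \times \mathbb{A}^1$ (the chart $\textbf{S}''$ contributes no new singularities, since $C_\infty$ and $D_\infty$ are nonzero and the fiber over $\infty$ is smooth by the analysis of Proposition \ref{dP4smooth}). The total Jacobian is a $2 \times 6$ matrix whose first five columns form the fiber Jacobian, so smoothness of the fiber $\textbf{S}_{\theta_0}$ at $P$ propagates immediately to smoothness of $\textbf{S}$ at $(P, \theta_0)$. Hence $\textbf{S}^\textup{sing}$ projects into the locus over which the fiber is non-smooth, and the proof of Proposition \ref{dP4smooth} already pins down the two mechanisms of rank loss: the second row vanishes exactly at points with $x = y = 0$ and $C_\theta = 0$, while the first row vanishes at points with $x = z = 0$ and $u = A_\theta v = B_\theta v$, forcing $D_\theta = 0$ by Lemma \ref{ABCDnonzero}(2). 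Any remaining rank-one configuration in which neither row is zero is controlled by the single polynomial equation (\ref{smeq}) in $\theta$, whose zero set over $\bar{K}$ is finite.

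For the one-dimensional piece of $\textbf{S}^\textup{sing}$, fix $\theta_0 \in \mathcal{F}_C(\bar{K})$ and use $a^{2h+1}\theta_0^{g+1} = 1$ to simplify $A_{\theta_0}, B_{\theta_0}$ to the constants $a + bc^2 d(b^{2h+1} - 1)^2$ and $a + (bc^2 d + 2c)(b^{2h+1} - 1)^2$, both independent of $\theta_0 \in \mathcal{F}_C$. The locus $\{x = y = 0\} \cap \textbf{S}_{\theta_0}$ is then precisely the conic $\mathcal{C}$, and at such a point the entire second row of the total Jacobian vanishes (its first five entries by $x = y = 0$, its $\theta$-entry $2aC_\theta C'_\theta uv$ by $C_{\theta_0} = 0$), while the first row is nonzero because the conic $\mathcal{C}$ is smooth. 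Hence the total Jacobian has rank exactly one, and $\mathcal{C} \times \mathcal{F}_C \subset \textbf{S}^\textup{sing}$.

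The remaining contribution is zero-dimensional. Over $\mathcal{F}_D(\bar{K})$ the fiber has the two isolated singular points $(0 : \pm C_{\theta_0}\sqrt{A_{\theta_0}} : 0 : A_{\theta_0} : 1)$ with $A_{\theta_0} = B_{\theta_0} \neq 0$, and a parallel check shows the first row of the total Jacobian vanishes there (its $\theta$-entry $-b[A'_\theta(u - B_\theta v) + B'_\theta(u - A_\theta v)]$ dies because $u = A_{\theta_0}v$ and $A_{\theta_0} = B_{\theta_0}$), confirming these are singular in $\textbf{S}$; the finitely many further roots of equation (\ref{smeq}), together with the loci $A_\theta = 0$ and $B_\theta = 0$, contribute only finitely many more closed points to the exceptional set $E$. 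The disjointness of $E$'s projection from $\mathcal{F}_C$ reduces to $\mathcal{F}_C \cap \mathcal{F}_D = \emptyset$ (the simultaneous vanishing of $a^{2h+1}\theta^{g+1} - 1$ and $a^{2h+1}b^{2h+1}\theta^{g+1} - 1$ would force $b^{2h+1} = 1$, impossible because $b$ generates a nontrivial prime of $\mathcal{O}_K$) together with the straightforward check that $A_{\theta_0}, B_{\theta_0}$ are nonzero on $\mathcal{F}_C$. The hard part will be confirming finiteness of the locus governed by equation (\ref{smeq}) when passing from $\theta \in K$ (where the $a$-adic argument excludes solutions) to $\theta \in \bar{K}$; but since (\ref{smeq}) is a single nonzero polynomial identity in $\theta$, its geometric zero set is automatically finite and contributes only to $E$.
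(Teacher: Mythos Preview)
Your proposal is correct and follows essentially the same approach as the paper: both analyze the $2\times 6$ Jacobian of the local defining equations and split into the cases where the second row vanishes (yielding $\mathcal{C}\times\mathcal{F}_C$), where the first row vanishes (finitely many points over $\mathcal{F}_D$), and where neither vanishes but the rows are proportional (finitely many points governed by equation~(\ref{smeq}) together with $A_\theta B_\theta=0$). Your framing via ``fiber smoothness propagates to total smoothness'' is a slight reorganization, and you make the disjointness $\mathcal{F}_C\cap\mathcal{F}_D=\emptyset$ more explicit than the paper does; conversely, you should note (as the paper's case structure does implicitly) that the rank-one configuration with neither row zero already forces $C_\theta\neq 0$, so the roots of~(\ref{smeq}) automatically avoid $\mathcal{F}_C$ without a separate check.
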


\begin{proof}
The argument is similar to the proof of Lemma \ref{lemmaJacobian}.
Recall that the variety $\textbf{S}$ is locally defined by equations of the form in homogeneous coordinates $(x:y:z:u:v)$ and affine coordinate $\theta$
$$\left\{
\begin{array}{l@{}l@{}l}
x^{2}-az^{2} &=& -b(u-A v)(u-B v)\\
x^{2}-ay^{2}&= &-aC^{2}uv
\end{array} \right.  ,$$
where $A,B,C\in K[\theta]$ are distinct polynomials in $\theta$ depending on which open chart ($\textbf{S}'$ or $\textbf{S}''$) is concerned.
The corresponding Jacobian matrix $J$ equals to
$$\begin{pmatrix}
2x&0&-2az&2bu-b(A+B)v&2bABv-b(A+B)u&bE\\
2x&-2ay&0&aC ^{2}v&aC ^{2}u&2auvC 'C
\end{pmatrix}
$$
with $E=(A'B+B'A)v^{2}-(A'+B')uv$, where all derivatives are taken with respect to $\theta$. We also recall that $B-A=2cD^2$.
In order to prove the statement, we are going to determine $\bar{K}$-values of $(x:y:z:u:v,\theta)$ satisfying the equations such that $J$ has rank less than $2$.
This will happen either one of the rows of $J$ vanishes or the two rows are nonzero but linearly dependent. We discuss separately these cases.

\begin{enumerate}[label=(\arabic*)]
\item Assume that the first row $J_1$ of $J$ vanishes. As $x=z=0$, the condition $v=0$ would imply $y=0$ and $u=0$ by the defining equations, which can never happen to homogeneous coordinates. Therefore $v\neq0$. The first defining equation implies that either $u=Av$ or $u=Bv$, but in both cases we deduce $Av=Bv$ via the assumption on the entry $J_{1,4}=0$. We obtain $A=B$, or equivalently $D=0$. This together with $x=z=0$ is a sufficient condition for $J_1=0$. From the defining equations, we find that $\frac{u}{v}=A$ and $\frac{y^2}{v^2}=C^2A$, then $(x:y:z:u:v)=(0:\frac{y}{v}:0:\frac{u}{v}:1)$ is determined up to  sign by the value of $\theta$. With the restriction of $D=0$, only finitely many values for $\theta$ are possible. This  gives rise to finitely many closed points in $\textbf{S}^\textup{sing}$ whose projections to $\P^1$ are disjoint from $\mathcal{F}_C$. For a precise description of this finite set, we refer to  Proposition \ref{Ysing} and its proof.
\item Assume that the second row of $J$ vanishes. With the restriction of the  defining equations, this  happens if and only if $x=y=C=0$. We know that $C$ does not vanish when $\theta=\infty$ by Lemma \ref{ABCDnonzero}(1). It remains to restrict ourselves to $\textbf{S}'$ where $C=0$ is given by (with $\theta=\theta'$) $$C_\theta=a^{2h+1}\theta^{g+1}-1=0,$$
    which defines $\mathcal{F}_C$.
    Once this equality holds, we find that
    \begin{align*}
    D=D_\theta&=a^{2h+1}b^{2h+1}\theta^{g+1}-1&&=b^{2h+1}-1,\\
    A=A_\theta&=a^{4h+3}\theta^{2g+2}+bc^{2}dD_\theta^{2}&&=a+bc^{2}d(b^{2h+1}-1)^{2},\\
    B=B_\theta&=a^{4h+3}\theta^{2g+2}+(bc^{2}d+2c)D_\theta^{2}&&=a+(bc^{2}d+2c)(b^{2h+1}-1)^{2},
    \end{align*}
    are all constants. Then the second defining equation always holds and the first defining equation becomes (note that $x=y=0$) $$az^2=b\left[u-\left(a+bc^2d(b^{2h+1}-1)^2\right)v\right]\left[u-\left(a+(bc^2d+2c)(b^{2h+1}-1)^2\right)v\right]$$
    which defines the desired conic $\mathcal{C}$.
\item Assume that neither row of $J$ vanishes and two rows are linearly dependent. We have $y=z=0$. We are going to show that at most finitely many singular points will appear in this case.

    The condition $v=0$ would imply that $x=0$ by the second defining equation and that $u=0$ by the fourth column of $J$, which can never happen to homogeneous coordinates. Therefore $v\neq0$.
    \begin{itemize}
    \item When $x=0$, the second defining equation says that $C^2u=0$. As the second row of $J$ does not vanish, we have $C\neq0$ and $u=0$, then $(x:y:z:u:v)=(0:0:0:0:1)$. From the first defining equation, we see that $AB=0$. We already know that $AB$ does not vanish at $\theta=\infty$ by Lemma \ref{ABCDnonzero}(1). For $\theta\neq\infty$, seen from the constant term that the polynomial $A_\theta B_\theta\in K[\theta]$ is not the zero polynomial. It has at most finitely many solutions in $\bar{K}$ giving rise to at most finitely many closed points in $\textbf{S}^\textup{sing}$. And their projections to $\P^1$ are disjoint from $\mathcal{F}_C$ by Lemma \ref{ABCDnonzero}(3). In fact, we can show that no contribution to the singular locus appears in this case, please refer to Proposition \ref{Ysing} and its proof.
    \item When $x\neq0$, the linear dependence of rows of $J$ asserts that $J_{1,4}=J_{2,4}$ and $J_{1,5}=J_{2,5}$ in terms of entries. In other words,
    \begin{equation}
    \begin{split}
    2bu&=\left(b(A+B)+aC^2\right)v,\\ \label{J*4}
    2bABv&=\left(b(A+B)+aC^2\right)u,
    \end{split}
    \end{equation}
    from which we deduce
    \begin{equation}\label{uvAB}
    \frac{u^2}{v^2}=AB.
    \end{equation}
    From the first defining equation, we have
    $$\frac{x^2}{v^2}=-b(\frac{u}{v}-A)(\frac{u}{v}-B),$$ which signifies that $(x:y:z:u:v)=(\frac{x}{v}:0:0:\frac{u}{v}:1)$ is determined up to two signs by the value of $\theta$. It remains to show that only finitely many choices of the value of $\theta$ are possible. Now it follows from (\ref{J*4}) and (\ref{uvAB}) that
    $$AB=\frac{u^2}{v^2}=\left(\frac{b(A+B)+aC^2}{2b}\right)^2$$
    or equivalently
    \begin{equation}\label{smeqgeneric}
    \begin{split}
    0&=\left(b(B-A)+aC^2\right)^2+4abAC^2\\
    &=(2bcD^2+aC^2)^2+4abAC^2.
    \end{split}
    \end{equation}
    Indeed, this polynomial equation in $\theta$ already appeared in the proof of Proposition \ref{XYSsmooth} as formula (\ref{smeq}). In that proof, we have seen from formula (\ref{smeqinfty}) that $\theta=\infty$ does not satisfy the equation. For $\theta\neq\infty$, the polynomial under consideration $$\Phi_\theta=(2bcD_\theta^2+aC_\theta^2)^2+4abA_\theta C_\theta^2$$
    has nonzero constant term $\Phi_0$ as explained in the discussion of the formula (\ref{smeq0}). Hence the polynomial $\Phi_\theta\in K[\theta]$ is nonzero, so it has at most finitely many roots in $\bar{K}$ giving rise to at most finitely many closed points in $\textbf{S}^\textup{sing}$. And their projections to $\P^1$ are disjoint from $\mathcal{F}_C$ by Lemma \ref{ABCDnonzero}(3)
    \end{itemize}
\end{enumerate}
\end{proof}

\begin{remark}
It is natural that the formulas (\ref{smeq}), (\ref{smeq0}) and (\ref{smeqinfty}) reappear as (\ref{smeqgeneric}) in this proof. When the value of $\theta$ fails the polynomial formula (\ref{smeqgeneric}), the fiber $^{h,g}\nosp\textbf{S}_\theta$ is smooth. There exists a Zariski open neighborhood $\mathcal{N}\subset\mathbb{P}^1$ of such a $\theta$ such that the values of closed points in $\mathcal{N}$ also fail (\ref{smeqgeneric}).
Then ${^{h,g}\nosp\tau}:{^{h,g}\nosp\textbf{S}}\To\P^1$ is smooth over $\mathcal{N}$ and hence $^{h,g}\nosp\textbf{S}^\textup{sing}$ lies outside $\tau^{-1}(\mathcal{N})$.
\end{remark}

\begin{lemma}\label{ABseparable}
The polynomial $A_\theta B_\theta\in K[\theta]$ has no multiple root in $\bar{K}$.
\end{lemma}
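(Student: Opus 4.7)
The plan is to reduce the separability of $A_\theta B_\theta$ to the separability of an auxiliary polynomial in the variable $T = \theta^{g+1}$. Set
\[
\mathcal{A}(T) = a^{4h+3}T^{2} + bc^{2}d(a^{2h+1}b^{2h+1}T-1)^{2}, \qquad \mathcal{B}(T) = a^{4h+3}T^{2} + (bc^{2}d+2c)(a^{2h+1}b^{2h+1}T-1)^{2},
\]
so that $A_\theta = \mathcal{A}(\theta^{g+1})$ and $B_\theta = \mathcal{B}(\theta^{g+1})$. Differentiating $A_\theta B_\theta = (\mathcal{A}\mathcal{B})(\theta^{g+1})$ with respect to $\theta$ gives $(g+1)\theta^{g}(\mathcal{A}\mathcal{B})'(\theta^{g+1})$, so any multiple root $\theta_{0} \in \bar{K}$ must satisfy either $\theta_{0} = 0$ or $(\mathcal{A}\mathcal{B})'(\theta_{0}^{g+1}) = 0$. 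Since $(\mathcal{A}\mathcal{B})(0) = bc^{2}d \cdot c(bcd+2)$ is nonzero by Proposition \ref{parameter} (condition (vii) forces $bcd+2 \neq 0$), the value $\theta_{0} = 0$ does not even produce a root of $A_\theta B_\theta$.

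It therefore suffices to show that $\mathcal{A}\mathcal{B}$ is squarefree in $\bar{K}[T]$. A brief expansion of each quadratic in the standard form $pT^{2}+qT+r$ yields
\[
\textup{disc}_{T}(\mathcal{A}) = -4a^{4h+3}bc^{2}d, \qquad \textup{disc}_{T}(\mathcal{B}) = -4a^{4h+3}c(bcd+2),
\]
both of which are nonzero by Proposition \ref{parameter}, so each of $\mathcal{A}$ and $\mathcal{B}$ has two distinct roots. To rule out any common root one uses the identity $\mathcal{B}(T) - \mathcal{A}(T) = 2c(a^{2h+1}b^{2h+1}T - 1)^{2}$: a shared zero $T_{0}$ would force $a^{2h+1}b^{2h+1}T_{0} = 1$, and then $\mathcal{A}(T_{0}) = a^{4h+3}T_{0}^{2} = a/b^{4h+2}$, which is nonzero, contradicting $\mathcal{A}(T_{0}) = 0$.

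The only actual computation is the discriminant expansion, which is a routine manipulation; the conceptual content is simply that the substitution $T = \theta^{g+1}$ exposes $A_\theta$ and $B_\theta$ as pullbacks of quadratics, reducing everything to quadratic-discriminant bookkeeping together with one application of the identity $\mathcal{B}-\mathcal{A} = 2cD^{2}$. I do not foresee any genuine obstacle.
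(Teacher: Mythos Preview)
Your proof is correct and follows essentially the same route as the paper: both pass to the substitution $T=\theta^{g+1}$ to reduce to quadratics, check that $\theta=0$ is not a root, and use the identity $B_\theta-A_\theta=2cD_\theta^{2}$ to rule out a common zero. The only difference is that the paper simply asserts the two quadratics in $T$ have distinct nonzero roots (and cites Lemma~\ref{ABCDnonzero}(2) for the absence of a common root), whereas you make this explicit by computing the discriminants $-4a^{4h+3}bc^{2}d$ and $-4a^{4h+3}c(bcd+2)$; your version is a fuller write-up of the same argument.
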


\begin{proof}
By Lemma \ref{ABCDnonzero}(2), the polynomials $A_\theta$ and $B_\theta$ do not have a $\bar{K}$-root in common. If we write $\Theta=\theta^{g+1}$, then
\begin{equation*}
\begin{split}
A_\theta&=a^{4h+3}\Theta^2+bc^2d(a^{2h+1}b^{2h+1}\Theta^2-1)^2\\ B_\theta&=a^{4h+3}\Theta^2+(bc^2d+2c)(a^{2h+1}b^{2h+1}\Theta^2-1)^2
\end{split}
\end{equation*}
as  polynomials in $\Theta$ have distinct nonzero $\bar{K}$-roots. Hence $A_\theta$ and $B_\theta$ as polynomials in $\theta$ have no multiple roots.
\end{proof}

\begin{proposition}\label{Ysing}
The singular locus $^{h,g}\nosp\textbf{Y}^\textup{sing}$ is a union of
$\mathcal{C}\times\mathcal{F}_C$ and $P\times\mathcal{F}_D$ where $P\in\P^4$ is the closed point of degree $2$ with homogeneous coordinates $(x:y:z:u:v)=(0:\pm(b^{2h+1}-1)\sqrt{a}:0:a:b^{4h+2})$.
\end{proposition}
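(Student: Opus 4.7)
The plan is to apply the Jacobian criterion directly to $\textbf{Y}$. On either chart, the variety $\textbf{Y}$ is cut out of the $\P^3$-bundle $\textbf{H}$ by the equations obtained from those of $\textbf{S}$ upon setting $x=0$, namely
\begin{equation*}
\left\{\begin{array}{l}az^{2}=b(u-A_\theta v)(u-B_\theta v),\\ y^{2}=C_\theta^{2}uv,\end{array}\right.
\end{equation*}
so that in the coordinates $(y,z,u,v,\theta)$ the $2\times 5$ Jacobian matrix $J$ reads
\begin{equation*}
J=\begin{pmatrix}0 & 2az & -b(2u-(A+B)v) & b((A+B)u-2ABv) & b\Phi\\ 2y & 0 & -C^{2}v & -C^{2}u & -2CC'uv\end{pmatrix}
\end{equation*}
with $\Phi=(A'+B')uv-(A'B+AB')v^{2}$, where derivatives are taken with respect to $\theta$. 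A point lies in $\textbf{Y}^{\textup{sing}}$ if and only if $\textup{rk}(J)<2$; I split the discussion into three cases according to whether row~$1$, row~$2$, or neither, vanishes.

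Vanishing of row~$2$ forces $y=0$ and $C_\theta=0$ (since $u=v=0$ is excluded by homogeneity), which is precisely the locus $\mathcal{F}_C$. On $\mathcal{F}_C$, the quantities $D_\theta$, $A_\theta$, $B_\theta$ become the constants already computed in the proof of Proposition \ref{Ssing}, and the first defining equation (with $y=0$) is exactly the equation of the conic $\mathcal{C}$, yielding the component $\mathcal{C}\times\mathcal{F}_C$. Vanishing of row~$1$ gives $z=0$ from $2az=0$, and from the $u$- and $v$-columns $u=\tfrac12(A+B)v$ and $(A+B)u=2ABv$; combining these yields $(A-B)^{2}v^{2}=0$ (with $v\neq 0$ by homogeneity), hence $A_\theta=B_\theta$, equivalently $D_\theta=0$ by Lemma \ref{ABCDnonzero}(2). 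A direct substitution shows that the $\partial_\theta$-entry $\Phi$ vanishes automatically when $A=B$ and $u=Av$, so no extra condition is imposed. Solving $D_\theta=0$ gives $\theta^{g+1}=(ab)^{-(2h+1)}$, whence $C_\theta=b^{-(2h+1)}-1$ and $A_\theta=ab^{-(4h+2)}$ become constants along $\mathcal{F}_D$; the relations $u=A_\theta v$ and $y^{2}=C_\theta^{2}A_\theta v^{2}$, after normalizing $v=b^{4h+2}$, recover the two points $(0:\pm(b^{2h+1}-1)\sqrt{a}:0:a:b^{4h+2})$, which form the single closed point $P$ of degree $2$ over $K$ (since $a\notin K^{*2}$, as $a$ is a uniformizer of its own completion). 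This produces the component $P\times\mathcal{F}_D$.

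The third case, in which neither row vanishes but the two are proportional, is ruled out as follows. Inspecting columns $1$ and $2$ as in the proof of Proposition \ref{Ssing} forces $y=z=0$; then $y^{2}=C^{2}uv=0$ combined with nonvanishing of row~$2$ excludes $C=0$, so $uv=0$. The subcase $v=0$ leads via the first equation to $bu^{2}=0$, hence $u=0$, which is impossible for homogeneous coordinates. In the subcase $u=0$, $v\neq 0$, the first equation yields $A_\theta B_\theta=0$, and both rows have only their $\partial_u$- and $\partial_\theta$-entries possibly nonzero; row~$2$ has its $\partial_\theta$-entry equal to $0$, whereas row~$1$ has $\partial_\theta$-entry $-b(A_\theta'B_\theta+A_\theta B_\theta')v^{2}$. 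Proportionality thus forces $(A_\theta B_\theta)'=0$ at a zero of $A_\theta B_\theta$, contradicting the separability of $A_\theta B_\theta$ established in Lemma \ref{ABseparable}. Hence the third case is empty, and $\textbf{Y}^{\textup{sing}}=\mathcal{C}\times\mathcal{F}_C\cup P\times\mathcal{F}_D$.

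The main obstacle I anticipate is the clean identification of the degree-$2$ point $P$ in the row-$1$-vanishing case: one must eliminate the parameter $\theta$ from the coordinates and then simultaneously confirm that the $\partial_\theta$-column imposes no extra condition beyond $D_\theta=0$. Both rest on the observation that the relevant quantities depend on $\theta$ only through $\theta^{g+1}$, which is pinned down by $D_\theta=0$; this is also what makes $P$ constant along $\mathcal{F}_D$ and explains why the two components of $\textbf{Y}^{\textup{sing}}$ project disjointly to $\P^{1}$.
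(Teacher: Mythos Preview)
Your proof is correct and follows essentially the same route as the paper's: the paper also runs the Jacobian criterion in the three cases (row~1 vanishes, row~2 vanishes, rows proportional), identifies $\mathcal{C}\times\mathcal{F}_C$ and $P\times\mathcal{F}_D$ from the first two, and rules out the third via Lemma~\ref{ABseparable}. The only cosmetic difference is that the paper phrases the argument as ``run the proof of Proposition~\ref{Ssing} with $x=0$'' rather than writing the Jacobian of $\textbf{Y}$ from scratch, and it disposes of $v=0$ before splitting on $u$; your explicit verification that the $\partial_\theta$-entry $\Phi$ vanishes on $\{A=B,\ u=Av\}$ is a detail the paper leaves implicit.
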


\begin{proof}
Recall that the variety $\textbf{Y}$ is locally defined by the same equations as $\textbf{S}$ with $x=0$ in addition. We run the same proof as Proposition \ref{Ssing} with $x=0$. The following are the additional details for the three cases respectively.
\begin{enumerate}[label=(\arabic*)]
\item When the first row of $J$ vanishes, we obtain $x=z=D=0$, $A=B$, and $(x:y:z:u:v)=(0:\pm C\sqrt{A}:0:\sqrt{A}:1)$. It is clear that $D$ does not vanish when $\theta=\infty$ by Lemma \ref{ABCDnonzero}(1). When $\theta\neq\infty$, the polynomial $D\in K[\theta]$ is given by $D_\theta=a^{2h+1}b^{2h+1}\theta^{g+1}-1$ which defines $\mathcal{F}_D$. For $\theta\in\mathcal{F}_D$, we find that both polynomials
    \begin{align*}
    A=A_\theta&=a^{4h+3}\theta^{2g+2}+bc^{2}dD_\theta^{2}&&=ab^{-4h-2},\\
    C=C_\theta&=a^{2h+1}\theta^{g+1}-1&&=b^{-2h-1}-1
    \end{align*}
    take constant value.  Whence $(x:y:z:u:v)=(0:\pm(b^{2h+1}-1)\sqrt{a}:0:a:b^{4h+2})$ which defines the degree $2$ closed point $P\in\P^4$.
\item When the second row of $J$ vanishes, we get $\mathcal{C}\times\mathcal{F}_C$.
\item When neither row of $J$ vanishes and two rows are linearly dependent, we obtain $y=z=0$ and $v\neq0$. Now it remains only the case $x=0$, where we deduce that $u=0$ and $AB=0$ which is possible only when $\theta\neq\infty$. Furthermore, the assumption of this case implies that the last column of $J$ is zero, in other words $0=E=(AB)'v^2-(A'+B')uv$ or equivalently $(AB)'=0$. But Lemma \ref{ABseparable} asserts that  $A_\theta B_\theta\in K[\theta]$ has no multiple roots in $\bar{K}$, which completes the proof by leading to a contradiction.
\end{enumerate}
\end{proof}

\begin{remark}
When $\theta\in\mathcal{F}_C$, the fiber $^{h,g}\nosp\textbf{Y}_\theta$ is not reduced. Indeed, it equals to $\mathcal{C}_{K(\theta)}$ as a set, and it has multiplicity $2$.
\end{remark}

\begin{proposition}\label{Xsing}
The singular locus $^{h,g}\nosp\textbf{X}^\textup{sing}$ is $Q\times\mathcal{F}_D$ where $Q\in\mathbb{A}^2$ is the closed point of degree $g+1$ with affine coordinates $(s,t)=(0,\sqrt[g+1]{ab^{-4h-2}})$.
\end{proposition}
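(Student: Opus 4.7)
The plan is to use the Jacobian criterion directly. Since $\textbf{X}$ is locally a hypersurface, I will work in the affine chart $\textbf{X}'_{s,t}$ (the situation in $\textbf{X}''_{s,t}$ is analogous and the charts $\textbf{X}'_{S,T}$, $\textbf{X}''_{S,T}$ will be handled at the end), where the equation is
$$F(s,t,\theta) = as^2 - b(t^{g+1}-A_\theta)(t^{g+1}-B_\theta) = 0.$$
A point is singular if and only if $F = \partial_sF = \partial_tF = \partial_\theta F = 0$.

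First I would analyze $\partial_sF = 2as = 0$, which forces $s=0$ (since $a \ne 0$ and we are in characteristic $0$). Feeding $s=0$ into $F=0$ gives $(t^{g+1}-A_\theta)(t^{g+1}-B_\theta) = 0$. Then $\partial_tF = -b(g+1)t^g\bigl[2t^{g+1}-A_\theta-B_\theta\bigr] = 0$ splits into two subcases. If $t \ne 0$, then $2t^{g+1} = A_\theta + B_\theta$, combined with $t^{g+1} \in \{A_\theta, B_\theta\}$, gives $A_\theta = B_\theta$; by Lemma \ref{ABCDnonzero}(2) this is equivalent to $D_\theta = 0$, i.e.\ $\theta \in \mathcal{F}_D$. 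Using $D_\theta = 0$ one computes $\theta^{2g+2} = (ab)^{-4h-2}$, whence $A_\theta = B_\theta = a^{4h+3}\theta^{2g+2} = ab^{-4h-2}$, and so $t^{g+1} = ab^{-4h-2}$. This yields exactly the closed point $Q \times \mathcal{F}_D$ claimed. One must finally verify $\partial_\theta F = 0$ holds automatically here: since $s = 0$ and $t^{g+1} - A_\theta = t^{g+1} - B_\theta = 0$ at these points, $\partial_\theta F$ vanishes because each term in the product rule has a vanishing factor.

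The remaining subcase $t = 0$ is where the main obstacle lies, and it is dispatched by invoking Lemma \ref{ABseparable}. If $s = t = 0$, then $F = 0$ forces $A_\theta B_\theta = 0$, while $\partial_\theta F = b\bigl[A'_\theta B_\theta + A_\theta B'_\theta\bigr] = b\,(A_\theta B_\theta)' = 0$ would force $\theta$ to be a multiple root of $A_\theta B_\theta \in K[\theta]$. But Lemma \ref{ABseparable} asserts this polynomial is separable, a contradiction. Thus no singular point arises with $t = 0$.

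Finally I would deal with the remaining charts. Since $D_\theta = a^{2h+1}b^{2h+1}\theta^{g+1} - 1$ does not vanish at $\theta = \infty$, the locus $\mathcal{F}_D$ sits inside $\mathbb{A}^1 \subset \mathbb{P}^1$, so no singular points are missed over $\theta = \infty$, i.e.\ on the $\textbf{X}''$-charts one finds nothing new by the same argument applied to the corresponding polynomial $a^{2h+1}b^{2h+1} - \theta''^{g+1}$. Likewise, in the charts $\textbf{X}'_{S,T}$ and $\textbf{X}''_{S,T}$, the coordinate $T = 1/t$ at the singular points equals $1/\sqrt[g+1]{ab^{-4h-2}} \ne \infty$, so the singular locus is entirely captured in the $(s,t)$-charts. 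Combining all cases gives $\textbf{X}^\textup{sing} = Q \times \mathcal{F}_D$, as claimed.
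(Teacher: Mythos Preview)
Your proof is correct and follows essentially the same approach as the paper: Jacobian criterion on the affine hypersurface, reduction to $s=0$, the case split on $t$ leading either to $A_\theta=B_\theta$ (giving $Q\times\mathcal{F}_D$) or to a multiple root of $A_\theta B_\theta$ (excluded by Lemma \ref{ABseparable}).

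One small logical slip at the end: your sentence about the $(S,T)$-charts runs backwards. Saying that the singular points you \emph{found} satisfy $T\neq\infty$ does not rule out \emph{additional} singular points at $T=0$, which are precisely the points missed by the $(s,t)$-chart. The paper handles this by observing that $T=0$ is never a root of $F(T)=\frac{b}{a}(1-AT^{g+1})(1-BT^{g+1})$, so any point there has $S\neq 0$ and hence $\partial_S\neq 0$. Adding this one-line check completes your argument.
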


\begin{proof}
Recall that the variety $\textbf{X}$ is locally defined by equations of the form
\begin{equation}\label{eqst}
0=s^2-f(t)=s^2-\frac{b}{a}(t^{g+1}-A)(t^{g+1}-B)
\end{equation}
or
\begin{equation}\label{eqST}
0=S^2-F(T)=S^2-\frac{b}{a}(1-AT^{g+1})(1-BT^{g+1})
\end{equation}
in affine coordinates $(s,t,\theta)$ or $(S,T,\theta)$,
where $A,B\in K[\theta]$ are distinct polynomials in $\theta$ depending on which open subset is concerned.
The Jacobian matrix of (\ref{eqST}) is
$$\begin{pmatrix}
2S&-F'(T)&\frac{b}{a}[(A'+B')T^{g+1}-(A'B+AB')T^{2g+2}]
\end{pmatrix}$$
where the derivative $F'$ is taken with respect to $T$ and the derivatives $A',B'$ are taken with respect to $\theta$. It vanishes  only if $S=0$, then $F(T)=0$ according to (\ref{eqST}). Since $T=0$ is never a root of $F(T)$ while the case $T\neq0$ can be covered by (\ref{eqst}), it remains to deal with (\ref{eqst}). The corresponding Jacobian matrix is
$$\begin{pmatrix}
2s&-f'(t)&\frac{b}{a}[(A'+B')t^{g+1}-(A'B+AB')]
\end{pmatrix},$$
where the derivative $f'$ is taken with respect to $t$ and the derivatives $A',B'$ are taken with respect to $\theta$. With the restriction of (\ref{eqst}), it vanishes only if $s=f(t)=f'(t)=0$. We know that $f(t)$ has multiple $\bar{K}$-roots if and only if $AB=0$ or $A=B$ which never happens when $\theta=\infty$ by Lemma \ref{ABCDnonzero}(1).
\begin{itemize}
\item When $AB=0$ for some $\bar{K}$-value of  $\theta\neq\infty$, then $t=0$ is the corresponding multiple root of $f(t)$. The condition that the entry  $J_{1,3}=0$ implies that $(AB)'=0$, which is impossible according to Lemma \ref{ABseparable}.
\item When $A=B$ for some $\bar{K}$-value of  $\theta\neq\infty$, or equivalently $0=D_\theta=a^{2h+1}b^{2h+1}\theta^{g+1}-1$, then $A_\theta=B_\theta$ takes the constant value $ab^{-4h-2}$. The polynomial $f(t)$ is thus independent on $\theta$ and it has $g+1$ distinct double $\bar{K}$-roots $t=\sqrt[g+1]{ab^{-4h-2}}$, which altogether define the closed point $Q$ of degree $g+1$. Moreover, it makes $J$ vanish.
\end{itemize}
\end{proof}

\subsection{Arithmetic of total spaces}\label{subsectiontotalarithmetic}\

Smooth proper models of the total space $^{h,g}\nosp\textbf{S}$ are  geometrically rationally connected. Smooth proper models of $^{h,g}\nosp\textbf{Y}$ and $^{h,1}\nosp\textbf{X}$ are elliptic surfaces.  We show that they have Brauer\textendash Manin obstruction to Hasse principle.

\begin{thm}\label{arithXYS}
Assume that $g$ is odd and $g+1\mid4h+2$.
There exists a Brauer\textendash Manin obstruction to the Hasse principle on any smooth proper models of $^{h,g}\nosp\textbf{X}$, $^{h,g}\nosp\textbf{Y}$, and $^{h,g}\nosp\textbf{S}$.
\end{thm}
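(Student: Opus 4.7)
The plan is to apply Harari's fibration formalism to smooth proper models $\tilde{V}$ of each $V\in\{\textbf{X},\textbf{Y},\textbf{S}\}$, exploiting the uniformity already observed in \S\ref{dP4BMo}: for every $\theta\in\P^1(K)$ and every local point $P_\pi\in V_\theta(K_\pi)$, the value $\inv_\pi(\mathcal{A}_\theta(P_\pi))$ equals $0$ if $\pi\neq a$ and $\frac{1}{2}$ if $\pi=a$, independently of both $\theta$ and $P_\pi$. Since the fiberwise computation already gives the total sum $\frac{1}{2}\neq 0$, the only task is to globalize this to a Brauer class on $\tilde{V}$ and to propagate the obstruction to arbitrary adelic points.

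First I would fix a smooth proper model $\tilde{V}$ via resolution of singularities, using the explicit description of $V^\textup{sing}$ in Propositions~\ref{Ssing},~\ref{Ysing},~\ref{Xsing}, and extend the structure morphism to $\tilde{\tau}\colon\tilde{V}\to\P^1$. Next I would verify that the generic Brauer class $\mathcal{A}=(a,b\varphi/v)\in\Br(K(\textbf{S}))$ (together with its restriction to $\textbf{Y}$ and its pullback to $\textbf{X}$ via the morphism $\delta$ of \S\ref{oddgCdefinition}) extends to a class in $\Br(\tilde{U})$ for a Zariski dense open $\tilde{U}\subset\tilde{V}$ whose complement is contained in the exceptional divisor of the resolution together with $\tilde{\tau}^{-1}(T)$ for a finite closed subset $T\subset\P^1$. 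This uses the four equivalent expressions for $\mathcal{A}$ displayed in \S\ref{dP4BMo}: between them they furnish a representative with regular, nowhere vanishing second slot in a neighborhood of each smooth point above $\P^1\setminus T$, so by purity $\mathcal{A}$ is unramified there.

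To conclude, I would apply Harari's formal lemma, in the spirit of \cite[Th\'eor\`eme 2.1.1]{Harari94}. Given any adelic point $(P_\pi)\in\tilde{V}(\A_K)$, the smoothness of $\tilde{V}$ and the density of $\tilde{U}$ permit approximation by some $(P'_\pi)\in\tilde{U}(\A_K)$ arbitrarily close to $(P_\pi)$ at any prescribed finite set of places; combining the surjectivity $\tilde{V}(K_\pi)\to\P^1(K_\pi)$ of Theorems~\ref{dP4arithmetic}(1') and~\ref{oddgcurvearithmetic}(1') with weak approximation on $\P^1$ and Hensel's lemma, one may further arrange that the $\tilde{\tau}(P'_\pi)$ all come from a common $\theta\in(\P^1\setminus T)(K)$. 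The computation recalled in the first paragraph then yields $\sum_\pi\inv_\pi(\mathcal{A}(P'_\pi))=\frac{1}{2}$, and this persists for the original $(P_\pi)$ by the local continuity (in fact, local constancy) of $P_\pi\mapsto\inv_\pi(\mathcal{A}(P_\pi))$ at smooth points, producing the desired Brauer\textendash Manin obstruction.

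The main obstacle I expect is to control $\mathcal{A}$ through the resolution: one must verify that the four equivalent expressions collectively cover a neighborhood of every smooth point over $\P^1\setminus T$ and that the class extends in codimension one across the exceptional divisors arising from blowing up the two-dimensional components $\mathcal{C}\times\mathcal{F}_C$, $P\times\mathcal{F}_D$, and $Q\times\mathcal{F}_D$ of the singular loci determined in \S\ref{subsectionsing}. A secondary difficulty is to arrange the lifts in the Harari step so that the common $\theta\in\P^1(K)$ actually yields local points on $\tilde{V}$ close to the given $(P_\pi)$ at the archimedean and small residue characteristic places; this is precisely where the strengthened local surjectivity statements Theorems~\ref{dP4arithmetic}(1') and~\ref{oddgcurvearithmetic}(1') play a decisive role.
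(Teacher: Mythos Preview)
Your route differs from the paper's. The paper never attempts to lift $\mathcal{A}$ to a global class on a smooth proper model. Instead it reduces to $\widetilde{\textbf{S}}$ via the functoriality of the Brauer\textendash Manin set along suitably chosen compactifications $\widetilde{\textbf{X}}\to\widetilde{\textbf{Y}}\to\widetilde{\textbf{S}}$, and then invokes Harari's fibration theorems (Th\'eor\`eme~4.2.1 of \cite{Harari94} together with Proposition~3.1.1 of \cite{Harari97}) in contrapositive form: since the generic fibre is geometrically rationally connected, if $\widetilde{\textbf{S}}(\A_K)^{\Br}\neq\emptyset$ then some fibre $\textbf{S}_\theta$ over a rational point $\theta\in\P^1(K)$ would satisfy $\textbf{S}_\theta(\A_K)^{\Br}\neq\emptyset$, contradicting Theorem~\ref{dP4arithmetic}. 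The only hypothesis to verify is that every closed fibre of $\widetilde{\textbf{S}}\to\P^1$ is split, and this is the content of Proposition~\ref{splitfiber}. No control of $\mathcal{A}$ through the resolution is needed, and statements~(1') play no role.

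Your argument, as written, has a gap at the final step. You produce $(P'_\pi)\in\tilde{U}(\A_K)$ with $\sum_\pi\inv_\pi(\mathcal{A}(P'_\pi))=\tfrac{1}{2}$ and then assert that this ``persists for the original $(P_\pi)$ by local continuity''. But the Brauer\textendash Manin obstruction on $\tilde V$ is by definition the pairing with $\Br(\tilde V)$; if $\mathcal{A}\notin\Br(\tilde V)$ then $\mathcal{A}(P_\pi)$ is undefined at points of $\tilde V\setminus\tilde U$, and local constancy cannot repair this. What Th\'eor\`eme~2.1.1 of \cite{Harari94} actually provides is a \emph{criterion} for $\mathcal{A}$ to lie in $\Br(\tilde V)$, namely that $\inv_\pi(\mathcal{A}(\cdot))$ vanish identically on $\tilde U(K_\pi)$ for almost all $\pi$. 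To use it you would have to run the computations of \S\ref{dP4BMo} for arbitrary $P_\pi$ lying over $\theta_\pi\in\P^1(K_\pi)$, not merely over a common $\theta\in\P^1(K)$; once that is done the ``arrange a common $\theta$'' manoeuvre is superfluous, and one still has to check unramifiedness along the exceptional divisors of the resolution. That programme is plausible, but it is not the argument you sketched, and the paper's appeal to the splitness criterion bypasses all of these verifications.
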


\begin{proof}
By looking at the generic fibers of the regular locus $\textbf{X}^\textup{reg}$, $\textbf{Y}^\textup{reg}$, and $\textbf{S}^\textup{reg}$ viewed as $\P^1$-schemes, we see that they are geometrically integral over $K$.
According to \cite[Proposition 6.1(i)]{CTPS16} together with Chow's lemma, the existence of Brauer\textendash Manin obstruction to the Hasse principle is  birational invariant among smooth proper geometrically integral varieties. Hence it suffices to prove the statement for one of the smooth proper models of each variety.

Combining Nagata's compactification \cite{Nagata63} and Hironaka's resolution of singularities \cite{Hironaka64}, we know that given a morphism between smooth varieties $V\To W$, then $V$ and $W$ admit smooth compactifications, and  for any smooth compactification $\widetilde{W}$ of $W$ there exists a compactification  $\widetilde{V}$ of $V$ such that the morphism  extends to $\widetilde{V}\To\widetilde{W}$. Now we take smooth open dense subvarieties $\textbf{X}^o\subset\textbf{X}$, $\textbf{Y}^o\subset\textbf{Y}$, and $\textbf{S}^o\subset\textbf{S}$ such that the morphism $\delta:\textbf{X}\To\textbf{Y}\subset\textbf{S}$ restricts to $\delta^o:\textbf{X}^o\To\textbf{Y}^o\subset\textbf{S}^o$.
We can extend $\delta^o$ to morphisms between certain smooth compactifications $\widetilde{\textbf{X}}\To\widetilde{\textbf{Y}}\To\widetilde{\textbf{S}}$. By the functoriality of the Brauer\textendash Manin set, it remains to show that there exists a Brauer\textendash Manin obstruction to the Hasse principle on a certain smooth compactification $\widetilde{\textbf{S}}$ of $\textbf{S}$.

As the generic fiber of $\widetilde{\textbf{S}}\To\P^1$ is a geometrically rationally connected variety, it has a section over $\bar{K}$ by the Graber\textendash Harris\textendash Starr theorem \cite[Theorem 1.1]{GHS03}. In \cite[Th\'er\`eme 4.2.1]{Harari94} and \cite[Proposition 3.1.1]{Harari97}, D. Harari proved that for such a fibration, the existence of a family of local rational points surviving the Brauer\textendash Manin obstruction implies the existence of a family of local rational points on a smooth fiber over a certain rational point surviving the Brauer\textendash Manin obstruction, provided that all fibers over closed points are split. But this last statement contradicts to Theorem \ref{XYSarithmetic}. To conclude, it remains to take $\textbf{S}^o=\textbf{S}^\textup{reg}$ and check the splitness assumption, which is the task of the forthcoming Proposition \ref{splitfiber}.
\end{proof}

\begin{proposition}\label{splitfiber}
Assume that $g$ is an odd integer. Let $\tilde{\tau}:\widetilde{\textbf{S}}\To\P^1$ be a smooth compactification of the morphism $\tau^\textup{reg}:\textbf{S}^\textup{reg}\To\P^1$ which is the restriction of $\tau:\textbf{S}\To\P^1$ to the regular locus $\textbf{S}^\textup{reg}=\textbf{S}\setminus\textbf{S}^\textup{sing}$ of $\textbf{S}$.

Then for any closed point $\theta\in\P^1$, the fiber $\widetilde{\textbf{S}}_\theta$ is split, i.e. it contains an open geometrically integral $K(\theta)$-subscheme \cite[Definition 0.1]{Sk96}.
\end{proposition}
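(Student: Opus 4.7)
The plan is a case analysis on the closed point $\theta\in\P^1$, using the description of $\textbf{S}^\textup{sing}$ from Proposition \ref{Ssing}. Since $\widetilde{\textbf{S}}_\theta$ contains the strict transform of $\textbf{S}_\theta^{\,\textup{red}}$, it suffices to exhibit a geometrically integral open subscheme of the reduced fiber. Let $\mathcal{F}'\subset\P^1$ denote the finite set of closed points lying under the isolated exceptional singular points of $\textbf{S}$, so that $\tau(\textbf{S}^\textup{sing})\subseteq\mathcal{F}_C\cup\mathcal{F}'$ with $\mathcal{F}_C\cap\mathcal{F}'=\varnothing$.

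For $\theta\notin\mathcal{F}_C\cup\mathcal{F}'$, the fiber $\textbf{S}_\theta$ lies entirely in $\textbf{S}^\textup{reg}$ and the Jacobian criterion (as in the proof of Proposition \ref{dP4smooth}, extended to closed points) shows it is smooth over $K(\theta)$, hence a smooth del Pezzo surface of degree $4$; such a surface is geometrically integral, being geometrically a blow-up of $\P^2$ at five points in general position.

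For $\theta\in\mathcal{F}_C$, the key observation is that $\sqrt{a}\in K(\theta)$. Indeed, $g$ odd makes $g+1=2m$ even, and the relation $a^{2h+1}\theta^{g+1}=1$ combined with $a^{2h+2}=(a^{h+1})^2$ gives $a=(a^{h+1}\theta^m)^2$. Consequently the second defining equation degenerates to $x^2=ay^2$ and factors over $K(\theta)$, so $\textbf{S}_\theta=\textbf{S}_\theta^+\cup\textbf{S}_\theta^-$ splits along $x=\pm\sqrt{a}\,y$. A direct computation shows each $\textbf{S}_\theta^\pm$ is a quadric surface in $\P^3_{K(\theta)}$ whose associated symmetric matrix has determinant proportional to $(A_\theta-B_\theta)^2=4c^2D_\theta^4$; this is nonzero because $D_\theta=b^{2h+1}-1\ne 0$ when $C_\theta=0$ (as $b^{2h+1}\ne 1$ for the prime $b$). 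Hence each branch is a smooth quadric in $\P^3_{K(\theta)}$, geometrically integral.

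The remaining case $\theta\in\mathcal{F}'$ is the main obstacle. Here $\textbf{S}_\theta$ is a proper complete intersection of two irreducible quadric hypersurfaces in $\P^4_{K(\theta)}$ with only finitely many singular points inherited from $\textbf{S}^\textup{sing}$, so it is Cohen--Macaulay of dimension two, generically smooth (hence geometrically reduced), and geometrically connected by the Fulton--Hansen connectedness theorem for complete intersections. To obtain geometric irreducibility, I would use the pencil criterion: $\textbf{S}_\theta\otimes \overline{K(\theta)}$ fails to be irreducible only if the pencil $\{\lambda Q_{1,\theta}+\mu Q_{2,\theta}\}$ contains a rank-at-most-$2$ quadric over $\overline{K(\theta)}$, equivalently a product of two linear forms. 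I would rule this out by computing the binary form $\det(\lambda M_{1,\theta}+\mu M_{2,\theta})$ explicitly and verifying, using the defining conditions $D_\theta=0$ or $\Phi_\theta=0$ (equation (\ref{smeqgeneric})) that characterise $\mathcal{F}'$, that all roots correspond to rank-$3$ or rank-$4$ members only. This last verification is the principal obstacle; it is a direct but tedious check case by case on the explicit polynomial relations defining the exceptional locus.
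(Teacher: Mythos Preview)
Your case $\theta\in\mathcal{F}_C$ is correct and matches the paper. The other two cases, however, can and should be merged, and doing so removes both the gap in your first case and the ``principal obstacle'' in your third.

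\textbf{The gap.} For $\theta\notin\mathcal{F}_C\cup\mathcal{F}'$ you assert that $\textbf{S}_\theta$ is smooth. The fact that $\textbf{S}_\theta\subset\textbf{S}^{\textup{reg}}$ does \emph{not} imply this: a smooth total space can have singular fibres (the fibre Jacobian has one fewer column). Concretely, the fibre is singular precisely when $\theta$ satisfies one of the polynomial relations $C_\theta=0$, $D_\theta=0$, $A_\theta B_\theta=0$, or $\Phi_\theta=0$ from (\ref{smeqgeneric}), whereas membership in $\mathcal{F}'$ requires in addition the equality of the $\theta$-derivative entries $J_{1,6}=J_{2,6}$. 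Nothing guarantees these coincide, so there may well be $\theta\notin\mathcal{F}_C\cup\mathcal{F}'$ with $\textbf{S}_\theta$ singular. The appeal to ``Proposition \ref{dP4smooth} extended to closed points'' does not help either: that proof rules out (\ref{smeq}) by $a$-adic valuation arguments valid only for $\theta\in K$.

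\textbf{The non-obstacle.} What you call the principal obstacle for $\theta\in\mathcal{F}'$ is in fact trivial, and works uniformly for \emph{every} $\theta\notin\mathcal{F}_C$. With $\Phi_1=x^2-az^2+b(u-A_\theta v)(u-B_\theta v)$ and $\Phi_2=x^2-ay^2+aC_\theta^2uv$, restrict $\lambda\Phi_1+\mu\Phi_2$ to the plane $x=v=0$: one obtains $\lambda b\,u^2-\lambda a\,z^2-\mu a\,y^2$, which already has rank $3$ whenever $\lambda\mu\neq0$. Since $C_\theta\neq0$ forces $\textup{rank}(\Phi_2)=4$ and one always has $\textup{rank}(\Phi_1)\geq3$, every nonzero member of the pencil has rank $\geq3$ over $\overline{K(\theta)}$, with no case analysis on $\Phi_\theta$ or $D_\theta$ needed. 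This (together with the easy checks that $\Phi_1,\Phi_2$ are irreducible with no common factor, and that a generic pencil member has rank $5$) is exactly the input to \cite[Lemma 1.11]{CTSSD-I}, giving geometric integrality of $\textbf{S}_\theta$ for all $\theta\notin\mathcal{F}_C$ at once. The paper proceeds this way, collapsing your three cases to two.
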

\begin{proof}
According to Proposition \ref{Ssing}, $\textbf{S}^\textup{sing}$ has codimension $2$ in $\textbf{S}$, hence for each closed point $\theta\in\P^1$ the fiber $\textbf{S}^\textup{reg}_\theta$ is an open $K(\theta)$-subscheme of both $\textbf{S}_\theta$ and  $\widetilde{\textbf{S}}_\theta$. It suffices to show that $\textbf{S}^\textup{reg}_\theta$ is split.

According to Proposition \ref{Ssing}, the singular locus $\textbf{S}^\textup{sing}$ is the union of $\mathcal{C}\times\mathcal{F}_C$ and finitely many closed points whose projections to $\P^1$ do not intersect $\mathcal{F}_C$.
%Because finitely many closed points will never affect the splitness of our fibers, we can pretend that the singular locus is exactly $\mathcal{C}\times\mathcal{F}_C$.
When $\theta\notin\mathcal{F}_C$, we denote by
\begin{equation*}
\begin{split}
\Phi_{1}&=x^{2}-az^{2}+b(u-A_\theta v)(u-B_\theta v),\\
\Phi_{2}&=x^{2}-ay^{2}+aC_\theta^{2}uv,
\end{split}
\end{equation*}
the two quadratic forms defining $\textbf{S}_\theta$. It is clear that $\textup{rank}(\Phi_1)\geq3$ and $\textup{rank}(\Phi_2)=4$ since $C_\theta\neq0$. Thanks to \cite[Lemma 1.11]{CTSSD-I}, where sufficient conditions are given to deduce that $\textbf{S}_\theta$ is geometrically integral, we thus find that  $\textbf{S}^\textup{reg}_\theta$ is geometrically integral.
We check the required conditions of the relevant lemma as follows.
\begin{itemize}
\item The polynomials $\Phi_1$ and $\Phi_2$ have no common factor since both are  irreducible, which is a consequence of their ranks and the diagonalization.
\item For $\lambda=1$ and $\mu\in K(\theta)^*\setminus\{\pm1\}$ not a root of the quadratic equation $(aC_\theta\mu-bA_\theta-bB_\theta)^2=4b^2A_\theta B_\theta$, the form $\lambda\Phi_1+\mu\Phi_2$ is of rank $5$.
\item For all $(\lambda,\mu)$, no nonzero form $\lambda\Phi_1+\mu\Phi_2$ is of rank less than $3$. Indeed, we may assume that $\lambda\neq0$ and $\mu\neq0$ since both $\Phi_1$ and $\Phi_2$ have rank no less than $3$. Seen by taking $x=v=0$, the form $\lambda\Phi_1+\mu\Phi_2$ has rank at least $3$.
\end{itemize}

When $\theta\in\mathcal{F}_C$, we deduce that $a$ is a square in the residue field $K(\theta)$ from the equality $C_\theta=a^{2h+1}\theta^{g+1}-1=0$ since $g$ is odd. In this case, the second defining equation of $\textbf{S}_\theta$ degenerates to $(x+\sqrt{a}y)(x-\sqrt{a}y)=0$. It defines a union of two hyperplanes $\mathcal{H}^+$ and $\mathcal{H}^-$ in $\P^4_{K(\theta)}$. As we have seen in the proof of Proposition \ref{Ssing}, the polynomials $A_\theta$ and $B_\theta$ takes nonzero constant values $A$ and $B$ with $A\neq B$ once $C_\theta=0$. The fiber $\textbf{S}_\theta$ is a union of the intersections $\mathcal{Q}^\pm$ of the $3$-dimensional quadric defined over $K(\theta)$ by
$$x^2-az^2=-b(u-Av)(u-Bv)$$
and the hyperplanes $\mathcal{H}^\pm$. Each of  $\mathcal{Q}^\pm$ is a $2$-dimensional geometrically integral quadric in $\P^3_{K(\theta)}$ since it is given by a quadratic form of rank $4$. As $\mathcal{H}^+\cap\mathcal{H}^-$ is given by $x=y=0$,  the intersection $\mathcal{Q}^+\cap\mathcal{Q}^-$ is exactly the smooth conic $\mathcal{C}_{K(\theta)}=\textbf{S}^\textup{sing}\cap\textbf{S}_\theta$. The two irreducible components $\mathcal{Q}^+\setminus\mathcal{Q}^-$ and $\mathcal{Q}^-\setminus\mathcal{Q}^+$ of $\textbf{S}^\textup{reg}_\theta$ are both geometrically integral.
\end{proof}

%\begin{footnotesize}
%\noindent\textbf{Acknowledgements.}
%%The authors would like to thank the referee for his suggestions on the organization of the paper.
%% He would also like to thank  the referees for  suggestions on English expressions.
%The second author is partially supported by the National Natural Science Foundation of China (Grant No.12071448).
%%Both authors are partially supported by  and Innovation Program for Quantum Science and Technology (Grant No. 2021ZD0302902).
%\end{footnotesize}

%\tableofcontents

\bibliographystyle{alpha}
\bibliography{reference}

\end{document}